\setlist[enumerate]{leftmargin=1.2em}
\setlist[itemize]{leftmargin=1.2em}
\newtheorem{theorem}{Theorem}[section]
\newtheorem{corollary}[theorem]{Corollary}
\newtheorem{lemma}[theorem]{Lemma}
\newtheorem{conjecture}[theorem]{Conjecture}
\newtheorem{proposition}[theorem]{Proposition}
\theoremstyle{definition}
\newtheorem{definition}[theorem]{Definition}
\theoremstyle{remark}
\newtheorem{remark}[theorem]{Remark}
\numberwithin{equation}{section}
\title{Small scale creation of the Lagrangian flow in 2d perfect fluids}
\author{Ayman Rimah Said}
\date{\ }
\begin{document}

\maketitle
\begin{abstract}
   In this paper we prove that for all solutions of the 2d Euler equations with initial vorticity with finite Sobolev smoothness an initial data dependent norm  of the associated Lagrangian flow blows up in infinite time at least like $t^{\frac{1}{3}}$. This initial data dependent norm quantifies the exact $L^2$ decay of the Fourier transform of the solution. This adapted norm turns out to be the exact quantity that controls a low to high frequency cascade which we then show to be the quantitative phenomenon behind the Lyapunov construction by Shnirelman in \cite{ShnLyap}.
\end{abstract}
\section{Introduction}
We study 2d inviscid flows
\begin{equation}
    \label{eq: 2dEuler1} \partial_t \omega + u\cdot\nabla \omega=0,
\end{equation}
\begin{equation}\label{eq: 2dEuler2}
    u=\nabla^\perp \psi \text{ and }\Delta\psi=\omega.
\end{equation} 
Here, the scalar vorticity $\omega:\mathbb{R}^2\times \mathbb{R}\rightarrow\mathbb{R}$ is transported by the velocity field $u:\mathbb{R}^2\times\mathbb{R}\rightarrow\mathbb{R}^2$ which is uniquely determined at each time $t\in\mathbb{R}$ from $\omega$ using the Newtonian potential:
\begin{equation}\label{Newtonian}u(x)=\frac{1}{2\pi}\int_{\mathbb{R}^2}\frac{(x-y)^\perp}{|x-y|^2}\omega(y)dy, \text{ and reciprocally } \omega=\nabla \times u=\partial_1 u_2-\partial_2 u_1.\end{equation} We adopt the standard notation $v^\perp=(-v_2, v_1)$ for $v=(v_1,v_2)\in\mathbb{R}^2.$ It is well known that smooth enough solutions to the 2d Euler equations \eqref{eq: 2dEuler1}-\eqref{eq: 2dEuler2} retain their smoothness for all finite times. Much less is known in the infinite-time limit. The long time behavior seems to consistently show some type of small scale creation for smooth solutions\cite{drivas2022singularity,khesin2023geometric}, which can be summed up in the following conjecture by Yudovich.
\begin{conjecture}[Yudovich (1974), \cite{yudovic1974loss,yudovich2000loss}, quote from \cite{morgulis2008loss}]\label{conj:generic small scale creation}
 There is a ``substantial set" of inviscid incompressible flows whose vorticity gradients grow without bound. At least this set is dense enough to provide the loss of smoothness for some arbitrarily small disturbance of every steady flow.  
\end{conjecture}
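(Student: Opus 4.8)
The plan is to decompose Yudovich's conjecture into two interacting components: a \emph{universal cascade mechanism} that forces a transfer of $L^2$ Fourier mass from low to high frequencies along the flow, and a \emph{rigidity/density argument} showing that this mechanism is triggered by arbitrarily small perturbations of any steady state. The organizing object is a data-adapted, Fourier-weighted norm
\[
\|\omega\|_{X}^2 := \int_{\mathbb{R}^2} |\hat\omega(\xi)|^2\, w(|\xi|)\, d\xi,
\]
where the weight $w$ is chosen so that $\|\omega\|_X$ records the precise $L^2$ decay rate of $\hat\omega$ for the given initial datum. The first goal is to prove that for every nonstationary solution with initial vorticity of finite Sobolev smoothness, $\|\omega(t)\|_X$ grows at least like $t^{1/3}$; the second is to show that nonstationarity, hence the cascade, is generic near every steady flow, and that the cascade forces $\|\nabla\omega(t)\|$ to be unbounded on a dense set.

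For the cascade, I would exploit the two structural features of \eqref{eq: 2dEuler1}--\eqref{eq: 2dEuler2}: pure transport of $\omega$ by the incompressible field $u$, and the smoothing elliptic relation $\Delta\psi=\omega$ in \eqref{Newtonian}. Transport conserves the full distribution function of $\omega$, hence every $\|\omega(t)\|_{L^p}$ and in particular the enstrophy $\|\omega(t)\|_{L^2}$; the energy $\|u(t)\|_{L^2}$ and the Hamiltonian are likewise conserved. The point is that these invariants pin the \emph{low-frequency} profile of $\omega$ while leaving its high-frequency tail free to grow, and the Biot--Savart law forces a quantitative lower bound on that growth. Concretely, I would differentiate $\|\omega(t)\|_X^2$ in time, integrate the transport nonlinearity by parts, and estimate the resulting commutator between the Fourier multiplier $w(|\xi|)$ and the transport operator $u\cdot\nabla$ against the conserved quantities; an interpolation between the conserved enstrophy and the growing weighted norm, together with the gain of one derivative from $\Delta\psi=\omega$, should yield a differential inequality of the form $\tfrac{d}{dt}\|\omega\|_X \gtrsim \|\omega\|_X^{-2}$, whose integration gives the $t^{1/3}$ lower bound.

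For the density statement, I would start from an arbitrary steady flow $\bar\omega$, characterized by $u[\bar\omega]\cdot\nabla\bar\omega=0$, i.e. $\bar\omega$ constant along the streamlines of $\bar\psi$. Writing $\omega=\bar\omega+\epsilon\phi$, the linearized operator around $\bar\omega$ is a transport-plus-nonlocal operator whose kernel consists exactly of the perturbations that keep the flow steady; this kernel is thin (infinite codimension) for any nondegenerate $\bar\omega$, so a dense set of arbitrarily small $\phi$ produces a genuinely nonstationary solution. For such a solution the cascade of the previous step applies, and I would then upgrade the growth of $\|\omega(t)\|_X$ to unboundedness of $\|\nabla\omega(t)\|$: since transport freezes the distribution function, the only way the weighted norm can run off to infinity is by filamentation of the level sets of $\omega$, which is precisely the statement that $\|\nabla\omega(t)\|\to\infty$. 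Identifying $\|\cdot\|_X$ with the Lyapunov functional of Shnirelman \cite{ShnLyap} would show that this growth is monotone and irreversible, matching the ``substantial set'' in the conjecture.

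The main obstacle is the passage from the weak weighted norm to a genuine Sobolev norm. The cascade lower bound controls $\|\omega\|_X$, a weighted $L^2$ Fourier quantity, but Yudovich's conjecture is about $\|\nabla\omega\|$; ruling out the scenario in which the escaping $L^2$ mass concentrates in a slowly drifting, bounded frequency band (so that $\|\nabla\omega\|$ stays finite while $\|\omega\|_X$ grows only through the weight) requires a genuinely dynamical, not merely functional-analytic, input. A second difficulty is the word ``every'' in the density claim: Arnold-stable steady states (e.g. certain monotone shear or radial profiles) are nonlinearly stable, so small perturbations there need not grow; one must therefore characterize the exceptional stable set and argue that it is nonetheless ``dense enough'' in Yudovich's precise sense, which I expect to be the most delicate part of the argument and the reason the full conjecture remains open.
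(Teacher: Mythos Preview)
The statement you are attempting to prove is a \emph{conjecture}, not a theorem proved in the paper. The paper states it as an open problem, surveys partial results toward it, and then proves something strictly weaker: growth at rate $t^{1/3}$ of the data-adapted norm $\|D\Phi_t-Id\|_{\omega_0}$ of the \emph{Lagrangian flow}, not of $\|\nabla\omega(t)\|$, and with no density statement near steady states. So there is no ``paper's own proof'' to compare against, and your proposal is an outline toward an open problem.

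That said, the specific mechanism you sketch has genuine gaps even relative to what the paper does establish. Your key step is the differential inequality $\tfrac{d}{dt}\|\omega\|_X \gtrsim \|\omega\|_X^{-2}$ obtained by ``differentiating $\|\omega(t)\|_X^2$, integrating by parts, and estimating the commutator $[w(|D|),u\cdot\nabla]$ against conserved quantities.'' This does not work: the commutator has no sign, and there is no mechanism by which the conserved enstrophy and energy alone force a \emph{lower} bound on high-frequency transfer. The paper's actual route to a $t^{1/3}$ lower bound is entirely different and substantially more delicate: it builds the Lyapunov quantity $(\nabla\times T_{[D\Phi_t]^{-1}}\Phi_t,\chi(\epsilon D)\omega_0)_{L^2}$, whose time derivative is shown---via paracomposition, the pull-back formula for paradifferential symbols, and a cancellation in the principal symbol---to equal the manifestly nonnegative term $\|T_{|\xi|/|[D\Phi_t]^{-1}\xi|}\chi(\epsilon D)\omega_0\|_{L^2}^2$ up to a remainder that is quantitatively smaller by a factor $\epsilon^{\min(s-1-\delta,1)}$. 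The positivity is a symbol-level algebraic fact, not an interpolation against invariants. Moreover, even with this machinery the conclusion is growth of a norm of $\Phi_t$, and the paper makes no claim that this implies unbounded $\|\nabla\omega\|$; your ``filamentation'' upgrade from $\|\omega\|_X\to\infty$ to $\|\nabla\omega\|\to\infty$ is exactly the missing step you yourself flag, and it is not resolved here. The density half of the conjecture (arbitrarily small perturbations of every steady state) is not touched by the paper at all, and your linearization-kernel argument would need to confront the Arnold-stable examples you mention, for which nonlinear stability is known.
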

The literature towards this conjecture is rich. Of note is the result of Koch \cite{koch2002transport} in which strong growth of H\"older and Sobolev norms of the vorticity is established near any background solution (stationary or time-dependent) for which the gradient of the flow map is unbounded in time. Yudovich also established (boundary induced) growth results under some mild assumption on the data near the boundary of the domain \cite{yudovich2000loss} (see also \cite{morgulis2008loss} for an extension of \cite{yudovich2000loss}). The conjecture was established within m-fold symmetry for $m\geq 3$ by Elgindi, Murray and the author in \cite{EMS}. There are also numerous important results on growth of solutions in the neighborhood of stable steady states \cite{drivas2022singularity,denisov2009infinite,nadirashvili1991wandering,kiselev2014small,zlatovs2015exponential}. In the case of open neighborhoods of shearing stable steady states a finer version of the conjecture including generic fluid aging has been recently established by Drivas, Elgindi and Jeong \cite{drivas2023twisting}.

The main Theorem of this paper can be stated informally as follows.
\begin{theorem}\label{thm:informal} Consider the 2d Euler equation on $\mathbb{R}^2.$ Then for $\omega_0\in H^{s}(\mathbb{R}^2)\setminus H^{s+\epsilon}(\mathbb{R}^2)$ for some $s>1$ and all $\epsilon>0$, then roughly the s+1 derivative of the Lagrangian flow blows up in infinite time at least like $t^{\frac{1}{3}}$. \end{theorem}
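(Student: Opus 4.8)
I would carry out the analysis on the vorticity and only translate back to the Lagrangian flow at the end. The inputs are the Lagrangian representation $\omega(t)=\omega_0\circ\Phi_t^{-1}$ with $\Phi_t$ area–preserving, the Biot–Savart law $u=\nabla^\perp\Delta^{-1}\omega$ (which gains one derivative), and the conservation laws on $\mathbb R^2$: the energy $\|u(t)\|_{L^2}^2=\|\omega(t)\|_{\dot H^{-1}}^2$, all norms $\|\omega(t)\|_{L^p}$ (in particular $\|\omega(t)\|_{L^\infty}$ and $\|\nabla u(t)\|_{L^2}^2=\|\omega(t)\|_{L^2}^2$), and, under a mild spatial localization of $\omega_0$, the moments $\int x\,\omega$ and $\int|x|^2\omega$; combined with the conserved $L^\infty$ bound the second moment makes $\{\omega(t)\}_{t\ge 0}$ tight in $L^2$. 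The first step is to set up the adapted norm. Put $\chi(N):=\|P_{\ge N}\omega_0\|_{L^2}$; because $\omega_0\in H^s\setminus\bigcup_{\epsilon>0}H^{s+\epsilon}$ this profile satisfies $\chi(N)^2\approx N^{-2s}$ up to a factor $g(N)\to 0$ that is not summable against any $N^{2\epsilon}$. By its definition and a composition estimate for $\omega(t)=\omega_0\circ\Phi_t^{-1}$ in $H^\sigma$, $\sigma>1$, together with the Biot–Savart gain, the adapted $(s+1)$–derivative norm of $\Phi_t$ is comparable, with constants depending only on the conserved quantities, to the weighted high–frequency enstrophy
$$\mathcal N(t):=\sup_{N\ge 1}\frac{\|P_{\ge N}\omega(t)\|_{L^2}}{\chi(N)},$$
so it suffices to prove $\mathcal N(t)\gtrsim t^{1/3}$.

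The growth of $\mathcal N$ is forced by a low–to–high frequency cascade, which I would quantify through a differential inequality of the shape $\frac{d}{dt}\mathcal N(t)\gtrsim\mathcal N(t)^{-2}$, equivalently $\frac{d}{dt}\mathcal N(t)^3\gtrsim 1$, whose integration gives $\mathcal N(t)\gtrsim t^{1/3}$. Two estimates feed it. First, an upper bound on the enstrophy flux: with $\Pi_N(t):=\tfrac12\frac{d}{dt}\|P_{\ge N}\omega(t)\|_{L^2}^2$, a Littlewood–Paley/paraproduct analysis of $u\cdot\nabla\omega=\nabla\!\cdot(u\omega)$, Bernstein's inequality in two dimensions, and the conserved $L^\infty$ and $L^2$ controls on $\omega$ yield a bound of the form $|\Pi_N(t)|\lesssim N\,\|P_{\sim N}\omega(t)\|_{L^2}^3\lesssim N\,\chi(N)^3\,\mathcal N(t)^3$ — the cubic power being the fingerprint of the quadratic, nonlocal Euler nonlinearity. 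Second, a non–degeneracy (non–stalling) statement: there are $\delta>0$ depending on $\omega_0$ and an exponent $\beta=\beta(s)>0$ such that for all large $t$ an amount $\ge\delta$ of enstrophy has by time $t$ been transferred above the scale $N(t):=t^{\beta}$, i.e.\ $\int_0^t\Pi_{N(t)}\gtrsim\delta$. Inserting the flux bound, $\delta\lesssim N(t)\chi(N(t))^3\,t\,\sup_{s\le t}\mathcal N(s)^3$; since $\chi(N)^2\approx N^{-2s}$, the choice $\beta=\tfrac{2}{3s-1}$ makes the prefactor $N(t)\chi(N(t))^3\,t\approx t^{-1}$, so $\sup_{s\le t}\mathcal N(s)\gtrsim t^{1/3}$, and running the same estimate at the scale realizing $\mathcal N(t)$ upgrades this to the pointwise–in–time form, hence to the flow–map statement through the comparison of the first step.

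The main obstacle is the non–stalling statement, i.e.\ that a fixed portion of enstrophy is pushed above a polynomially growing wavenumber within time $t$. Qualitatively one wants: $\omega(t)$ does not converge strongly in $L^2$, so a definite amount of enstrophy escapes to infinite frequency. This should follow from the conservation laws together with the borderline hypothesis $\omega_0\notin\bigcup_{\epsilon>0}H^{s+\epsilon}$ — tightness from the second moment forces weak–$L^2$ defects of $\{\omega(t)\}$ to be genuine small–scale oscillations rather than spatial escape, while the roughness of $\omega_0$ is what excludes essentially the only obstruction, namely that $\omega_0$ be (a rearrangement of) a steady state. Quantitatively, obtaining the rate $N(t)\gtrsim t^{\beta(s)}$ rather than mere eventual transfer requires a Lagrangian stretching lower bound — controlling from below the elongation of the level sets of $\omega_0$ under $\Phi_t$ using the conserved strain $\|\nabla u(t)\|_{L^2}^2=\|\omega(t)\|_{L^2}^2$ and the boundedness of the Lagrangian acceleration $\ddot\Phi_t=-(\nabla p)\circ\Phi_t$, with $-\Delta p=\mathrm{tr}\big((\nabla u)^2\big)$ controlled by conserved $L^p$ norms of $\omega$. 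Finally, the quantity $\|\omega_0\|_{L^2}^2-\lim_{N}\limsup_{t}\|P_{<N}\omega(t)\|_{L^2}^2$ — the enstrophy lost to infinite frequency, which is exactly what the cascade transports — is to be identified with Shnirelman's generalized Lyapunov function from \cite{ShnLyap}, exhibiting the cascade as its quantitative mechanism. I expect this last circle of ideas (non–strong–convergence together with a uniform rate, valid for every admissible borderline datum) to be the delicate part; the harmonic–analytic flux estimate and the composition/regularity comparison are comparatively routine.
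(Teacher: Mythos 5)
There are two genuine gaps, and a concrete example shows both steps fail. First, your opening reduction --- that the adapted $(s+1)$-derivative norm of $\Phi_t$ is comparable to the weighted high-frequency enstrophy $\mathcal N(t)=\sup_N\|P_{\ge N}\omega(t)\|_{L^2}/\chi(N)$ --- is unjustified and false in general. Take $\omega_0$ radial, in $H^s\setminus H^{s+\epsilon}$, with non-constant angular velocity: then $\omega(t)=\omega_0$ for all $t$, so $\mathcal N(t)\equiv 1$ and \emph{no} enstrophy is ever transferred to high frequencies, yet the theorem applies and it is precisely $D\Phi_t$ (the differential of the differential rotation) that develops small scales. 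The paper's conclusion is a lower bound on $\left\Vert D\Phi_t-Id\right\Vert_{\omega_0}$; it proves only an \emph{upper} (double-exponential) bound on $\left\Vert \omega(t)\right\Vert_{\omega_0}$, and proving growth of the vorticity norm would be Yudovich's conjecture itself, which is not what is claimed. So your reduction replaces the theorem by a harder (and, for radial data, false) statement. The same example kills the engine of your argument: the ``non-stalling'' claim that a fixed amount $\delta$ of enstrophy passes above $N(t)=t^{\beta}$ for every admissible datum cannot hold, and you correctly identify it as the step you cannot supply --- it is not a technical delicacy but the point where the strategy breaks.

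The paper's mechanism is of a different nature and sidesteps any dynamical non-degeneracy input. The monotone quantity is $\bigl(\nabla\times T_{[D\Phi_t]^{-1}}\Phi_t,\chi(\epsilon D)\omega_0\bigr)_{L^2}$, a pairing of the para-pullback of the \emph{flow} with the high frequencies of the \emph{fixed initial data}; its time derivative equals the manifestly nonnegative square $\bigl\Vert T_{|\xi|/|[D\Phi_t]^{-1}\xi|}\chi(\epsilon D)\omega_0\bigr\Vert_{L^2}^2$ plus an error of size $O\bigl(\epsilon^{\min(s-1-\delta,1)}dr_{\omega_0}(\epsilon)^2\bigr)$, which is subleading relative to $dr_{\omega_0}(\epsilon)^2$ as $\epsilon\to0$. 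The positivity is algebraic and microlocal --- ellipticity of the symbol $|\xi|/|[D\Phi_t]^{-1}\xi|$ together with the cancellation in $\frac{d}{dt}\bigl([D\Phi_t]^{-1}\Phi_t\bigr)$ and the commutativity of the paradifferential calculus at the level of principal symbols --- not a statement about where the enstrophy of $\omega(t)$ goes. The error terms are controlled by the exact-smoothness propagation (Theorem \ref{thm: specific smooth prop}), which plays the role your ``comparatively routine'' composition estimates would, but only as an upper bound. Finally, the exponent $1/3$ does not come from balancing a flux bound against $N(t)=t^{\beta}$; it comes from the differential inequality $\left\Vert D\Phi_t\right\Vert_{L^\infty}\left\Vert D\Phi_t-Id\right\Vert_{\omega_0}\ge C\int_0^t\left\Vert D\Phi_s\right\Vert_{L^\infty}^{-1}ds$ obtained from the lower bound on the positive square.
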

All the results in this paper can be naturally generalised to the periodic 2d Euler equation on $\mathbb{T}^2$ and an analogue of Theorem \ref{thm:informal} holds on $\mathbb{T}^2$. 

\subsection{Propagation of exact smoothness in the 2d Euler equation}

We recall the standard global well-posedness theory of the 2d Euler equation in sub-critical Sobolev spaces, see for example the following excellent monographs as an introduction to the study of the Euler equations \cite{majda2002vorticity,chemin1995fluides, gerard1992resultats, marchioro2012mathematical}. We define the Lagrangian flow
\[
\frac{d}{dt}\Phi_t=u\circ \Phi_t \text{ with } \Phi_0(\cdot)=Id.
\] 
\begin{theorem}\label{thm:welp 2d eul}
    Consider $s>1$ and $\omega_0\in H^{s}\left( \mathbb{R}^2\right)$, then there exists a unique solution $\omega\in C\left(\mathbb{R},H^{s}\left( \mathbb{R}^2\right)\right)$ of \eqref{eq: 2dEuler1}-\eqref{eq: 2dEuler2} with initial data $\omega_0$. Moreover  there exists a universal constant $C_s$ such that we have following estimate
\[
\left\Vert \omega(t) \right\Vert_{H^{s}}+\left\Vert \Phi_t-Id \right\Vert_{H^{s+1}}\leq C_{s} \left\Vert \omega_0 \right\Vert_{H^s}\exp\left(\exp\left(C_{s} \left\Vert \omega_0 \right\Vert_{L^2\cap L^\infty} t\right)\right).
\]
\end{theorem}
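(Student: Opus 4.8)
The plan is to establish the stated double-exponential bound by combining the classical vorticity transport estimates with elliptic regularity for the Biot–Savart law. First I would recall that since vorticity is transported along the (measure-preserving) Lagrangian flow, all $L^p$ norms of $\omega(t)$ are conserved: $\|\omega(t)\|_{L^p}=\|\omega_0\|_{L^p}$ for all $p\in[1,\infty]$. In particular $\|\omega(t)\|_{L^2\cap L^\infty}=\|\omega_0\|_{L^2\cap L^\infty}$, which is what will appear in the innermost exponent. The next input is the logarithmic velocity bound: from \eqref{Newtonian} and Calderón–Zygmund theory one has $\|\nabla u(t)\|_{L^\infty}\lesssim \|\omega(t)\|_{L^2\cap L^\infty}\log\!\big(e+\|\omega(t)\|_{H^s}\big)$ for $s>1$ (using the Sobolev embedding $H^{s}\hookrightarrow C^{0,\alpha}$ to control the BMO endpoint), hence the bound is in terms of the conserved quantity times a logarithm of the high norm.

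The core step is a Grönwall argument for $\|\omega(t)\|_{H^s}$. Differentiating, commuting $\Lambda^s$ (or $\langle\nabla\rangle^s$) past the transport term, and using the standard commutator/product estimate $\|[\Lambda^s, u\cdot\nabla]\omega\|_{L^2}\lesssim \|\nabla u\|_{L^\infty}\|\omega\|_{H^s}$ (valid for $s>1$; note $\|u\|_{\dot H^{s}}\lesssim\|\omega\|_{H^{s-1}}\le\|\omega\|_{H^s}$ absorbs the other term), I get
\[
\frac{d}{dt}\|\omega(t)\|_{H^s}\ \lesssim\ \|\nabla u(t)\|_{L^\infty}\,\|\omega(t)\|_{H^s}\ \lesssim\ C_s\|\omega_0\|_{L^2\cap L^\infty}\,\|\omega(t)\|_{H^s}\,\log\!\big(e+\|\omega(t)\|_{H^s}\big).
\]
Setting $y(t)=\log(e+\|\omega(t)\|_{H^s})$ turns this into $y'\lesssim C_s\|\omega_0\|_{L^2\cap L^\infty}\,y$, so $y(t)\le y(0)\exp(C_s\|\omega_0\|_{L^2\cap L^\infty}t)$, which unravels to the single-exponential-of-exponential bound on $\|\omega(t)\|_{H^s}$, with $y(0)=\log(e+\|\omega_0\|_{H^s})\lesssim_s \|\omega_0\|_{H^s}$ absorbed into the constant (adjusting $C_s$ and the outer prefactor). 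Uniqueness and persistence of regularity in $C(\mathbb{R},H^s)$ follow from the a priori bound by the usual approximation/energy-method argument, which I would just cite from the referenced monographs.

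It remains to bound $\|\Phi_t-\mathrm{Id}\|_{H^{s+1}}$. Here I would use $\partial_t(\Phi_t-\mathrm{Id})=u\circ\Phi_t$ together with the composition estimate controlling $\|f\circ\Phi\|_{H^{s+1}}$ in terms of $\|f\|_{H^{s+1}}$ and powers of $\|\nabla\Phi\|_{L^\infty}$ (and $\|\Phi-\mathrm{Id}\|_{H^{s+1}}$), again valid since $H^{s+1}$ with $s>1$ is an algebra and $s+1>2$; the needed Lipschitz bound $\|\nabla\Phi_t\|_{L^\infty}\le\exp\!\big(\int_0^t\|\nabla u\|_{L^\infty}\big)\le\exp(\exp(C_s\|\omega_0\|_{L^2\cap L^\infty}t))$ comes from the ODE for $\nabla\Phi_t$. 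Since $\|u(t)\|_{H^{s+1}}\lesssim \|\omega(t)\|_{H^s}$ by elliptic regularity, feeding the already-established bound on $\|\omega(t)\|_{H^s}$ into a final Grönwall inequality for $\|\Phi_t-\mathrm{Id}\|_{H^{s+1}}$ closes the estimate. The main obstacle is purely bookkeeping: being careful that the composition and commutator estimates on $\mathbb{R}^2$ hold at the regularity $s>1$ (borderline for $C^1$) and that every constant depends only on $s$, with all the growth funneled through the single conserved quantity $\|\omega_0\|_{L^2\cap L^\infty}$ in the inner exponent — there is no genuinely hard analytic point here, the theorem being the classical Beale–Kato–Majda-type propagation result recast with the flow map included.
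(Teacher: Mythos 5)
The paper does not prove Theorem \ref{thm:welp 2d eul} at all — it is recalled as the standard global well-posedness theory and delegated to the cited monographs — and your sketch is exactly the classical argument those references give (conserved $L^p$ vorticity norms, the logarithmic bound on $\left\Vert\nabla u\right\Vert_{L^\infty}$, a log-Gr\"onwall for the high norm, then the flow map), so it is essentially the intended proof. The one place to be careful is the commutator step for $1<s\le 2$: the second Kato--Ponce term is $\left\Vert u\right\Vert_{H^s}\left\Vert\nabla\omega\right\Vert_{L^\infty}$, and $\nabla\omega\notin L^\infty$ at that regularity, so you should either exploit the divergence form $\nabla\cdot[\Lambda^s,u]\omega$ to trade it for $\left\Vert u\right\Vert_{H^{s+1}}\left\Vert\omega\right\Vert_{L^\infty}\lesssim\left\Vert\omega\right\Vert_{H^{s}}\left\Vert\omega_0\right\Vert_{L^\infty}$, or run the energy estimate on the velocity in $H^{s+1}$ (where $s+1>1+d/2$ and the commutator is symmetric), rather than invoking $\left\Vert u\right\Vert_{\dot H^{s}}\lesssim\left\Vert\omega\right\Vert_{H^{s-1}}$, which does not remove the problematic factor.
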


One of the first results of this paper is to describe a \textit{maximal} propagation of smoothness result for the 2d Euler equation. First we set, for $\omega_0\in L^2$, $\epsilon \geq 0$
\[
dr_{\omega_0}(\epsilon)=\left(\int_{\mathbb{R}^2\setminus B\left(0,\frac{1}{\epsilon}\right)}\left|\mathscr{F}(\omega_0)(\xi)\right|^2 d\xi\right)^{\frac{1}{2}}
\]
 We note that $dr_{\omega_0}$ is an increasing function of $\epsilon$ with $\displaystyle \lim_{\epsilon\to 0}  dr_{\omega_0}(\epsilon)=0$ and $\displaystyle \lim_{\epsilon\to +\infty}  dr_{\omega_0}(\epsilon)=\left\Vert \omega_0\right\Vert_{L^2(\mathbb{R}^2)}$. For $f\in \mathscr{S}(\mathbb{R}^2)$ such that $dr_{f}$ vanishes at $0$ at least like $dr_{\omega_0}$, we define
\[
\left\Vert f \right\Vert_{\omega_0}=\sup_{\epsilon\geq 0} \frac{dr_{f}(\epsilon)}{dr_{\omega_0}(\epsilon)}.
\]
Note that by construction $\left\Vert \omega_0 \right\Vert_{\omega_0}=1$.
\begin{theorem}\label{thm: specific smooth prop}
    Consider $s>1$ and $\omega_0\in H^{s}\left( \mathbb{R}^2\right)$ and $\omega\in C\left(\mathbb{R},H^{s}\left( \mathbb{R}^2\right)\right)$ the unique solution of \eqref{eq: 2dEuler1}-\eqref{eq: 2dEuler2} with initial data $\omega_0$. Suppose that for all $\lambda<1$
    \begin{equation}\label{eq:decay fourier hyp}
    C_{\omega_0}(\lambda) dr_{\omega_0}( \epsilon)\leq dr_{\omega_0}(\lambda \epsilon),
    \end{equation}
    for some function $C_{\omega_0}$ independent of $\epsilon$. Then there exists a constant $C_{\omega_0}'$ such that 
    \[
    \left\Vert \omega(t) \right\Vert_{\omega_0}+\left\Vert D\Phi_t -Id\right\Vert_{\omega_0}\leq  \exp\left(\exp\left(C_{\omega_0}' t\right)\right)
    \]
\end{theorem}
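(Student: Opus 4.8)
The plan is to run a Gronwall argument on $\|\omega(t)\|_{\omega_0}+\|D\Phi_t-Id\|_{\omega_0}$ whose growth rate is governed by $\|\nabla u(t)\|_{L^\infty}$ alone, which --- crucially --- grows at most \emph{singly} exponentially even though $\|\omega(t)\|_{H^s}$ and $\|\Phi_t-Id\|_{H^{s+1}}$ are only known to grow doubly exponentially. First, the logarithmic Beale--Kato--Majda inequality $\|\nabla u\|_{L^\infty}\lesssim\|\omega\|_{L^2}+\|\omega\|_{L^\infty}\log(e+\|\omega\|_{H^s})$ (valid for $s>1$), together with the conservation of $\|\omega(t)\|_{L^2}$, $\|\omega(t)\|_{L^\infty}$ and the bound of Theorem~\ref{thm:welp 2d eul}, gives $\|\nabla u(t)\|_{L^\infty}\le\gamma(t):=C_{\omega_0}\exp(C_{\omega_0}t)$, whence $\int_0^t\gamma(s)\,ds\le\exp(C_{\omega_0}'t)$. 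The goal is then the differential inequality $\tfrac{d}{dt}\big(\|\omega(t)\|_{\omega_0}+\|D\Phi_t-Id\|_{\omega_0}\big)\lesssim_{\omega_0}\gamma(t)\big(1+\|\omega(t)\|_{\omega_0}+\|D\Phi_t-Id\|_{\omega_0}\big)$, after which Gronwall and $\|\omega_0\|_{\omega_0}=1$ finish the proof.

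For $\omega$ itself, fix $\epsilon>0$, set $N=1/\epsilon$, and perform an $L^2$ energy estimate for the Fourier truncation $P_{\ge N}\omega$, whose $L^2$ norm is exactly $dr_{\omega(t)}(\epsilon)$. Because $\nabla\cdot u=0$, the transport term $\big(P_{\ge N}\omega,\,u\cdot\nabla P_{\ge N}\omega\big)$ vanishes and one is left with $\tfrac{d}{dt}dr_{\omega(t)}(\epsilon)\le\big\|[P_{\ge N},u\cdot\nabla]\omega\big\|_{L^2}$. A Littlewood--Paley/Bony analysis of this commutator shows that the low-frequency $\times$ low-frequency interaction produces output of frequency $<N$ and hence does not contribute; the surviving interactions, estimated with $\|\nabla u\|_{L^\infty}$ and the Biot--Savart gain $|\mathscr{F}(u)(\xi)|=|\mathscr{F}(\omega)(\xi)|/|\xi|$, are bounded by $C\gamma(t)\,dr_{\omega(t)}(c\epsilon)$ for a fixed $c>1$. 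Dividing by $dr_{\omega_0}(\epsilon)$ and invoking the hypothesis \eqref{eq:decay fourier hyp} --- applied with $\lambda=1/c$ and $\epsilon$ replaced by $c\epsilon$ --- to get $dr_{\omega_0}(c\epsilon)\le\frac{1}{C_{\omega_0}(1/c)}dr_{\omega_0}(\epsilon)$ uniformly in $\epsilon$, we obtain $\tfrac{d}{dt}\frac{dr_{\omega(t)}(\epsilon)}{dr_{\omega_0}(\epsilon)}\le\frac{C\gamma(t)}{C_{\omega_0}(1/c)}\|\omega(t)\|_{\omega_0}$; taking the supremum over $\epsilon$ (legitimate, the bound being $\epsilon$-independent) closes the estimate for $\|\omega(t)\|_{\omega_0}$ and at the same time shows it is finite.

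For the flow map, differentiate $\partial_t(D\Phi_t-Id)=(Du\circ\Phi_t)(D\Phi_t-Id)+Du\circ\Phi_t$. Since $Du$ is a zeroth-order Fourier multiplier applied to $\omega$, one has $dr_{Du}(\epsilon)\le dr_{\omega}(\epsilon)$, hence $\|Du\|_{\omega_0}\le\|\omega\|_{\omega_0}$; moreover, as $\omega_0\in H^s$ with $s>1$ forces $dr_{\omega_0}$ to be regularly varying of order exactly $s$ under \eqref{eq:decay fourier hyp}, the $\omega_0$-norm controls $\|\cdot\|_{L^\infty}$ and $\|\cdot\|_{L^2}$. The products and the composition with $\Phi_t$ are then handled by the $\omega_0$-adapted analogues of the tame product and composition estimates that underlie the $H^{s+1}$ bound of Theorem~\ref{thm:welp 2d eul}, with the weight $1/dr_{\omega_0}(\epsilon)$ in place of $\epsilon^{-(s+1)}$ --- here again \eqref{eq:decay fourier hyp} is exactly what makes the requisite dyadic summations converge with $\epsilon$-uniform constants. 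The resulting forcing terms involve only $\|\nabla u(t)\|_{L^\infty}=\gamma(t)$, $\|\omega(t)\|_{\omega_0}$ (already bounded by the previous step) and sup-norm quantities such as $\|D\Phi_t-Id\|_{L^\infty}$ which are controlled doubly exponentially by the standard theory, so the differential inequality above closes by Gronwall.

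The main obstacle is the harmonic analysis in the non-standard space carrying the weight $1/dr_{\omega_0}$: the commutator, product and composition estimates must be re-derived there, and one must check that \eqref{eq:decay fourier hyp} --- a reverse-doubling (Karamata-type regular variation) property of $dr_{\omega_0}$ --- is precisely the hypothesis that keeps the Littlewood--Paley machinery admissible, that pins the order of $dr_{\omega_0}$ to $s$, and that keeps every estimate linear in the $\omega_0$-norms. The most delicate point is the composition estimate for the flow map: a lossy version would turn the singly-exponential bound on $\|\nabla u\|_{L^\infty}$ into a triple rather than a double exponential, so the homogeneities must be tracked with care.
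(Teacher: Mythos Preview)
Your plan is essentially the paper's own proof: paralinearise and run an energy/commutator estimate on a high-frequency projection of $\omega$, then control $D\Phi_t-Id$ via product and composition estimates in the $\omega_0$-adapted space, with hypothesis~\eqref{eq:decay fourier hyp} ensuring the dyadic sums close uniformly in $\epsilon$; the single-exponential bound on $\|\nabla u\|_{L^\infty}$ via BKM feeds Gronwall to give the double exponential. The paper carries out exactly the technical program you flag as ``the main obstacle'': it first characterises $\|\cdot\|_{\omega_0}$ via a Littlewood--Paley seminorm (Proposition~\ref{prop: charac norm omeg}), proves the product estimate (Lemma~\ref{lem: comp mult}), and for the composition estimate uses Alinhac's \emph{paracomposition} operator (Lemma~\ref{lem:paracomp omeg norm}), writing $u\circ\Phi_t=\Phi_t^*u+T_{Du\circ\Phi_t}\Phi_t+R$ at the level of $\Phi_t$ rather than differentiating first as you do. This is not a genuinely different route, just a cleaner packaging of the tame composition estimate you need; it also sidesteps having to make sense of $T_{D^2u\circ\Phi_t}$ when $1<s\le 2$. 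One small inaccuracy: \eqref{eq:decay fourier hyp} forces $dr_{\omega_0}$ to be regularly varying of order $s_{\omega_0}$, the \emph{maximal} Sobolev index of $\omega_0$ (Proposition~\ref{prop: eqv fr dec}), which may exceed the $s$ in the hypothesis --- but since $s_{\omega_0}\ge s>1$ your embedding claims go through regardless.
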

 From the proof of the of the previous theorem we show that the condition \eqref{eq:decay fourier hyp} is equivalent to the fact that $\omega_0$'s Fourier transform decays at most algebraically fast at infinity, in particular the previous theorem applies for all $\omega_0\in H^{s_1}(\mathbb{R}^2)\setminus H^{s_2}(\mathbb{R}^2)$ for a pair $1<s_1<s_2$. We believe that this type of exact smoothness propagation holds more generally for hyperbolic evolution PDEs. For example the proof here works in verbatim to give an analogous result (locally in time) for the SQG equation. 
 
 In some sense this is an optimal sub-critical smoothness propagation statement and answers the $L^2$ based version of Problem 23 of \cite{khesin2023geometric} for the persistence of ``kinks" in fluid flows. It is not hard to see from the proof of Theorem \ref{thm: specific smooth prop}, specifically from Proposition \ref{prop: charac norm omeg} that an analogous definition of an $\omega_0$ dependent norm that captures sub-critical $L^p$ based ``kinks"  are also being propagated by the flow of the 2d Euler equations for $p\in [1,+\infty]$.

\subsection{The forward frequency cascade}
Towards understanding long time behavior of a dynamical system one of the main tools is the construction of Lyapunov functions, in the case of the 2d Euler equations very few such examples are known \cite{EMS,JS,morgulis2008loss,ShnLyap,yudovich2000loss}. The proof of Theorem \ref{thm:informal} relies on the careful study of Shnirelman's pioneering Lyapunov construction \cite{ShnLyap} and supplementing it with optimal quantitative estimates which turn out to be the construction's most natural setting and in particular for the so called ``microlocal scalar product". This allows us to significantly simplify the construction. \begin{remark}
    We record here part of S. Alinhac's MathSciNet review of A. Shnirelman's paper: ``  This may all seem very complicated, but it is not, the technical ``complications'' arising naturally in the course of a basically simple argument. Moreover, the new tool (the microlocal scalar product) introduced by the author is certainly likely to have many other applications, just as similar tools (microlocal defect measures, Young measures, etc.) already have in the theory of weak solutions of nonlinear PDE, in homogenization, in control theory, etc. Finally, we would like to emphasise what we believe can be learned from the approach of the author: considering such an old problem as the fluid flow, it is likely that not many new mathematical results are going to be obtained by nineteenth-century PDE methods. Even researchers oriented towards applications will have to incorporate Shnirelman's results into their research, just because these results, far from being some (irrelevant) refinement of basically well understood things, are the first rigorous ones in the subject."
\end{remark}
To state the theorem on the Lyapunov construction we need to introduce the notion of paraproducts. For the reader unfamiliar with paraproducts we give a heuristic construction below.  \subsubsection*{\textbf{$\bullet$ Paraproducts}} 
For the sake of this discussion let us pretend that $\partial_x$ is left-invertible with a choice of $\partial_x^{-1}$ that acts continuously from $H^s$ to $H^{s+1}$. We follow here analogous ideas to the ones presented by Shnirelman in \cite{shnirelman2005microglobal}. One way to define the paraproduct of two functions $f,g\in H^s$ with $s$ sufficiently large is: we differentiate $fg$ $k$ times, using the Leibniz formula, and then restore the function $fg$ by the $k$-th power of $\partial_x^{-1}$:
 \begin{align*}
 fg&=\partial_x^{-k}\partial_x^{k}(fg)\\
 	&=\partial_x^{-k}\big(g\partial_x^k f+k\partial_x g\partial_x^{k-1} f+\dots+k\partial_x f\partial_x^{k-1} g+g\partial_x^k f  \big)\\
 	&=T_g f+T_fg+R,
 \end{align*}
 where,
 \[T_gf=\partial_x^{-k}\big(g\partial_x^k f\big), \ \ T_fg=\partial_x^{-k}\big(f\partial_x^k g\big),\]
 and $R$ is the sum of all remaining terms. The key observation is that if $s>\frac{1}{2}+k$, then $g \mapsto T_fg$ is a continuous operator in $H^s$ for $f  \in H^{s-k}$. The remainder $R$ is a continuous bilinear operator from $H^s$ to $H^{s+1}$. The operator $T_fg$ is called the paraproduct of $g$ and $f$ and can be interpreted as follows. The term $T_fg$ takes into play high frequencies of $g$ compared to those of $f$ and demands more regularity in $g\in H^s$ than $f \in H^{s-k}$ thus the term $T_fg$ bears the ``singularities" brought on by $g$ in the product $fg$. Symmetrically $T_gf$ bears the "singularities" brought on by $f$ in the product $fg$ and the remainder $R$ is a smoother function ($H^{s+1}$) and does not contribute to the main singularities of the product. 
 \subsubsection*{\textbf{$\bullet$ Paradifferential operators}} 
 To get a good intuition of a paradifferential operator $T_{p(x,\xi)}$ with symbol $p(x,\xi)\in \Gamma^\beta_\rho(\mathbb{R}^2)$, as a first gross approximation, one can think of $p(x,\xi)\approx f(x)m(\xi)$ and $T_{p(x,\xi)}$ as the composition of a paraproduct $T_f$ with a Fourier multiplier $m(D)$, that is:
 \[
 T_{p(x,\xi)}\approx T_f m(D), \text{ with } f\in W^{\rho,\infty} \text{ and }m \text{ is of order }\beta.
 \]
 Indeed following Coifman and Meyer's symbol reduction given in Proposition $5$ of \cite{coifmandela}, one can show that linear combinations of composition of a paraproduct with a Fourier multiplier are dense in the space of paradifferential operators. Definition \ref{def: para op} gives the rigorous formula for $T_{p(x,\xi)}$.
 
We are now in position to state the theorem summarising the Lyapunov construction. 
\begin{theorem}\label{thm: intro lyap}
    Consider $\chi(\xi)\in C^\infty_0( \mathbb{R}^2\setminus B(0,1))$ and $\omega_0\in H^{s}$ with $s>1$ verifying \eqref{eq:decay fourier hyp} then there exists a universal constant $C$ and a constant $C_{\omega_0}$ such that for $\epsilon \geq 0$
\begin{align*}
    \frac{d}{dt}\left(\nabla \times T_{[D\Phi_t]^{-1}}\Phi_t,\chi(\epsilon D)\omega_0\right)_{L^2}=\left\Vert T_{\frac{\left\vert\xi\right\vert}{\left\vert [D\Phi_t]^{-1}\xi\right\vert}}\chi(\epsilon D)\omega_0\right\Vert^2_{L^2}+ O\left(C_\delta e^{Ce^{C_{\omega_0}t}}\epsilon^{\min(s-1-\delta,1)}  dr_{\omega_0}(\epsilon)^2\right),
    \end{align*}
for all $0<\delta<s-1$ and $C_\delta>0$ is constant depending only on $\delta$.
\end{theorem}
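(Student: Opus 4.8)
The plan is to differentiate the microlocal scalar product in time, rewrite everything in Lagrangian variables via the Biot--Savart law, and extract the leading symbol; the leading term produces the advertised positive quadratic form and the rest is absorbed into the error. Since $\omega_0$ and $\chi(\epsilon D)$ do not depend on $t$, the product rule with $\partial_t\Phi_t=u\circ\Phi_t$ and, from $\partial_t D\Phi_t=((Du)\circ\Phi_t)\,D\Phi_t$, the identity $\partial_t[D\Phi_t]^{-1}=-[D\Phi_t]^{-1}((Du)\circ\Phi_t)$, gives
\[
\frac{d}{dt}\left(\nabla\times T_{[D\Phi_t]^{-1}}\Phi_t,\chi(\epsilon D)\omega_0\right)_{L^2}=\left(\nabla\times\big(T_{\partial_t[D\Phi_t]^{-1}}\Phi_t+T_{[D\Phi_t]^{-1}}(u\circ\Phi_t)\big),\chi(\epsilon D)\omega_0\right)_{L^2}.
\]
The algebraic heart is the Lagrangian Biot--Savart law: with $\widetilde\psi_t=\psi_t\circ\Phi_t$ the stream function in Lagrangian variables, the chain rule and $\det D\Phi_t=1$ give the exact identity $u\circ\Phi_t=D\Phi_t\,\nabla^\perp\widetilde\psi_t$ (using the elementary $2\times2$ determinant-one identities relating $\xi^\perp$ to $M^{-1}$ and $M^{\top}$), while pushing $\Delta\psi_t=\omega_t$ through $\Phi_t$ and using $\omega_t\circ\Phi_t=\omega_0$ shows $\widetilde\psi_t$ solves a uniformly elliptic second order equation with leading symbol $-|[D\Phi_t]^{-1}\xi|^2$, whose coefficients inherit the Sobolev regularity of $D\Phi_t$. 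Elliptic paralinearization then yields $\widetilde\psi_t=-T_{|[D\Phi_t]^{-1}\xi|^{-2}}\omega_0+\rho_t$, where $\rho_t$ is more regular than $\widetilde\psi_t$ by $\min(s-1-\delta,1)$ derivatives, with constants of double-exponential type in $t$; this gain is precisely what will produce the exponent $\epsilon^{\min(s-1-\delta,1)}$.

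For the velocity term, substitute $u\circ\Phi_t=D\Phi_t\,\nabla^\perp\widetilde\psi_t$ and apply the paradifferential calculus (compositions of paraproducts, and of paraproducts with $\nabla^\perp$ and $\nabla\times$): the \emph{diagonal} part, in which the Jacobian factors telescope and $\nabla\times$ meets $\nabla^\perp\widetilde\psi_t$, reproduces $\nabla\times\nabla^\perp\widetilde\psi_t=\Delta\widetilde\psi_t$, which by the paralinearization of $\widetilde\psi_t$ equals $T_{|\xi|^2/|[D\Phi_t]^{-1}\xi|^2}\omega_0$ up to an operator smoother by $\min(s-1-\delta,1)$ derivatives applied to $\omega_0$. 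Pairing this with $\chi(\epsilon D)\omega_0$, using that $\chi(\epsilon D)$ is self-adjoint and commutes with paradifferential operators up to lower order, that $T_a^{*}T_a=T_{|a|^2}$ modulo one derivative for the real symbol $a=|\xi|/|[D\Phi_t]^{-1}\xi|$, and hypothesis \eqref{eq:decay fourier hyp} to turn $H^s$-smoothness of $\omega_0$ into the decay of $dr_{\omega_0}$ on the frequency range $|\xi|\gtrsim\epsilon^{-1}$ carrying $\chi(\epsilon D)\omega_0$, yields $\big\|T_{|\xi|/|[D\Phi_t]^{-1}\xi|}\chi(\epsilon D)\omega_0\big\|_{L^2}^2$ plus terms of size $\epsilon^{\min(s-1-\delta,1)}dr_{\omega_0}(\epsilon)^2$.

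It then remains to absorb the deformation term $\big(\nabla\times(T_{\partial_t[D\Phi_t]^{-1}}\Phi_t),\chi(\epsilon D)\omega_0\big)_{L^2}$ together with the off-diagonal contributions produced above, namely the pieces in which $\nabla\times$ falls on a Jacobian factor rather than on $\widetilde\psi_t$. Here one first notes that $T_M$ kills the affine map, so $\Phi_t$ may be replaced by $\eta_t=\Phi_t-Id\in H^{s+1}$; the surviving expression is a sum of paraproducts of Sobolev functions, each of whose frequency-$\gtrsim\epsilon^{-1}$ part, after $\nabla\times$ and pairing, is controlled using the fact that the flow map is one derivative smoother than the vorticity, the sharp tail bound $dr_{D\Phi_t-Id}(\epsilon)\le\|D\Phi_t-Id\|_{\omega_0}\,dr_{\omega_0}(\epsilon)\le e^{Ce^{C_{\omega_0}t}}dr_{\omega_0}(\epsilon)$ from Theorem \ref{thm: specific smooth prop}, and the remainder bounds from the paradifferential calculus; the derivative-losing contributions among them cancel, thanks to $u\circ\Phi_t=\partial_t\Phi_t$, to $\omega_0$ being advected, and to $\det D\Phi_t=1$, and what is left gains $\min(s-1-\delta,1)$ derivatives against $\chi(\epsilon D)$. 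Collecting constants through Theorems \ref{thm:welp 2d eul}--\ref{thm: specific smooth prop} produces the claimed remainder $C_\delta e^{Ce^{C_{\omega_0}t}}\epsilon^{\min(s-1-\delta,1)}dr_{\omega_0}(\epsilon)^2$.

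The main difficulty is exactly this cancellation. The combination $\nabla\times T_{[D\Phi_t]^{-1}}\Phi_t$ is engineered --- this is the content of Shnirelman's microlocal scalar product --- so that, after differentiating and inserting the Lagrangian form of Euler, the contributions that would lose a derivative cancel and only the positive quadratic form plus genuinely smoothing remainders survive; identifying the right grouping and checking the cancellation at the level of paradifferential symbols, keeping exact track of which Jacobian ($D\Phi_t$, $[D\Phi_t]^{\top}$, $[D\Phi_t]^{-1}$, $[D\Phi_t]^{-\top}$) occupies each slot --- where the $2\times2$ determinant-one identities do all the collapsing --- is the core of the argument. A second, more technical, point is the elliptic paralinearization of $\widetilde\psi_t$ for the Lagrangian-metric Poisson equation with merely $H^{s-1}$-type coefficients, and the precise tracking of the $\min(s-1-\delta,1)$ regularity gain through every remainder and every commutator with $\chi(\epsilon D)$: it is this gain, transported to frequency $\sim\epsilon^{-1}$, that upgrades the identity to the ``$\text{derivative}=\text{positive}+\text{small}$'' statement with $dr_{\omega_0}$ as the natural measuring device.
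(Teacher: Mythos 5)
Your opening moves coincide with the paper's: differentiate, use $\partial_t[D\Phi_t]^{-1}=-[D\Phi_t]^{-1}(Du)\circ\Phi_t$, and aim to exhibit $T_{|\xi|^2/|[D\Phi_t]^{-1}\xi|^2}\omega_0$ as the principal part with everything else gaining $\epsilon^{\min(s-1-\delta,1)}$. But the argument has a genuine gap precisely at the point you yourself flag as ``the core of the argument'': the cancellation of the derivative-losing terms is asserted (``thanks to $u\circ\Phi_t=\partial_t\Phi_t$, to $\omega_0$ being advected, and to $\det D\Phi_t=1$'') rather than exhibited, and without it the remainder is of size $\epsilon^{-1}dr_{\omega_0}(\epsilon)^2$, not $\epsilon^{\min(s-1-\delta,1)}dr_{\omega_0}(\epsilon)^2$. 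Concretely: in your decomposition $u\circ\Phi_t=D\Phi_t\,\nabla^\perp\widetilde\psi_t$, the Bony piece $T_{\nabla^\perp\widetilde\psi_t}D\Phi_t$ carries the high frequencies of $D\Phi_t$, so after $\nabla\times$ and pairing it contributes at order $\epsilon^{-1}dr_{D\Phi_t}(\epsilon)\,dr_{\omega_0}(\epsilon)\sim\epsilon^{-1}dr_{\omega_0}(\epsilon)^2$; the deformation term $-\nabla\times T_{[D\Phi_t]^{-1}Du\circ\Phi_t}\Phi_t$ is of the same size; neither is small on its own. The paper's mechanism is specific and checkable: paracompose $u\circ\Phi_t=\Phi_t^*u+T_{Du\circ\Phi_t}\Phi_t+R_1$, so that the high-frequency-of-$\Phi_t$ part is \emph{exactly} $T_{Du\circ\Phi_t}\Phi_t$, and then the symbolic calculus (Theorem \ref{thm: symb calc para}) gives that $T_{[D\Phi_t]^{-1}}T_{Du\circ\Phi_t}-T_{[D\Phi_t]^{-1}Du\circ\Phi_t}$ is of order $-\min(s-1-\delta,1)$, which kills the deformation term. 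Your route needs an equivalent identity at the level of your own decomposition, and you never write one down.

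A second, related defect is the claim that the elliptic paralinearization of the Lagrangian Poisson equation yields $\widetilde\psi_t=-T_{|[D\Phi_t]^{-1}\xi|^{-2}}\omega_0+\rho_t$ with $\rho_t$ smoother by $\min(s-1-\delta,1)$ derivatives. Writing $\mathrm{div}(A_t\nabla\widetilde\psi_t)=\omega_0$ with $A_t=[D\Phi_t]^{-1}[D\Phi_t]^{-T}\in H^s_{\mathrm{loc}}$, the paralinearization remainder contains $\mathrm{div}(T_{\nabla\widetilde\psi_t}(A_t-Id))$, which carries the high frequencies of $D\Phi_t$ against a merely bounded symbol; its $dr$-tail is comparable to $dr_{\omega_0}(\epsilon)$, not smaller, so $\rho_t$ is \emph{not} uniformly smoother and this piece must also be fed into the cancellation rather than discarded. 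The paper avoids this by paracomposing $\omega_0\circ\Phi_t^{-1}$ instead: there the high-frequency-of-the-flow term is $T_{(\nabla\omega_0)\circ\Phi_t^{-1}}\cdot\Phi_t^{-1}$, whose paraproduct symbol $\nabla\omega_0\in H^{s-1}$ is itself rough, making the operator of order $\max(2-s,0)$ and producing the genuine $\epsilon^{\min(s-1-\delta,1)}$ gain; the principal symbol is then read off from the pullback formula \eqref{eq:paracom exp} for paradifferential operators under paracomposition. Your Lagrangian stream-function route can likely be made to work, and is an attractive, more classical packaging of the same computation, but as written the two steps that produce the theorem's quantitative content --- the exact cancellation and the $\min(s-1-\delta,1)$ gain in every remainder --- are the ones left unproved.
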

\begin{remark}
   In \cite{ShnLyap} Shnirelman proved the previous theorem in the case $dr_{\omega_0}(\epsilon)=O(\epsilon^{s}), s>2$.  Theorem \ref{thm: intro lyap} generalises this result to the exact regularity of $\omega_0$ whatever it is and gives the optimal control in $\epsilon$.
\end{remark}
In particular the leading order decay in $\epsilon $ of $\left(\nabla \times T_{[D\Phi_t]^{-1}}\Phi_t,\chi(\epsilon D)\omega_0\right)$ is
\[
\int_0^t\left\Vert T_{\frac{\left\vert\xi\right\vert}{\left\vert [D\Phi_r]^{-1}\xi\right\vert}}\chi(\epsilon D)\omega_0\right\Vert^2_{L^2}dr\underset{\epsilon\to 0}{\sim} c(t)dr_{\omega_0}(\epsilon)^2
\]
where $c(t)$ is an increasing function of time. The explicit estimate on the residual term allows for the following interpretation of the previous result. Fixing an outer frequency region $\{\left\vert\xi\right \vert\geq R \}$ then there exists $T_{R}>0$ increasing in $R$ such that for $\left\vert t\right\vert \leq T_{R}$ there is an averaged forward frequency cascade of $\Phi_t$ in the signed measure $\mathscr{F}(\omega_0)(\xi)d\xi$ into the region $\{\left\vert\xi\right \vert\geq R \}$. Thus there is always a positive flux of frequency at ``infinity'' ($R\to \infty$) and the growth of that rate gives the desired Lyapunov function. Using $dr_{\omega_0}$, it is given explicitly by a re-normalised version of the semi-classical measure first introduced in \cite{gerard1991mesures} and independently as the Wigner measure in \cite{lions1993mesures} which is the so called microlocal scalar product introduced by Shnirelman in \cite{ShnLyap}
\[
L_{\chi,\omega_0}(\Phi_t)=\limsup_{\epsilon \to 0}\frac{\left(\chi(\epsilon D)\nabla \times T_{[D\Phi_t]^{-1}}\Phi_t,\chi(\epsilon D)\omega_0\right)_{L^2}}{dr_{\omega_0}(\epsilon)^2}.
\]
To give a geometrical interpretation of what is measured by this Lyapunov function we need to heuristically introduce the notion of paracomposition which is defined rigorously in Theorem \ref{thm: def para comp}.
 \subsubsection*{\textbf{$\bullet$ Paracomposition}} 
We again work with $f \in H^s$ and $g \in C^s$ with $s$ large and consider the composition of two functions $f\circ g$ which bears the singularities of both $f$ and $g$, and our goal is to separate them. We proceed as before by differentiating $f \circ g$ $k$ times, using the Fa\'a di Bruno's formula, and then restore the function $fg$ by the $k$-th power of $\partial_x^{-1}$:
\begin{align*}
 f \circ g&=\partial_x^{-k} \partial_x^{k} (f \circ g)\\
 	&=\partial_x^{-k}\big((\partial_x^k f\circ g)\cdot(\partial_x g)^k 
 	+\dots+(\partial_x f\circ g)\cdot\partial_x^k g  \big)\\
 	&=g^*f+T_{\partial_x f \circ g}g+R,
 \end{align*}
 where,
 \[g^*f=\partial_x^{-k}\big((\partial_x^k f\circ g)\cdot(\partial_x g)^k \big) \text{ is the paracomposition of $f$ by $g$}\]
 and $R$ is the sum of all remaining terms. Again the key observation is that if $s>\frac{1}{2}+k$, then $f \mapsto g^*f$ is a continuous operator in $H^s$ for $g  \in C^{s-k}$. Thus this term bears essentially the singularities of $f$ in $f\circ g$. As before $T_{\partial_x f \circ g}g$ bears essentially the singularities of $g$ in $f\circ g$. The remainder $R$ is a continuous bilinear operator from $H^s$ to $H^{s+1}$. Thus we have separated the singularities of the composition $f\circ g$.

The geometric interpretation of the left hand side in Theorem \ref{thm: intro lyap} is computed explicitly in the proof as
\[
\frac{d}{dt} T_{[D\Phi_t]^{-1}}\left(\Phi_t-Id\right)\approx T_{[D\Phi_t]^{-1}}\Phi_t^* u,
\]
 now recall that the pull back of $u$ by $\Phi_t$ is given by $[D\Phi_t]^{-1} u\circ \Phi_t$, then the right hand side in the previous identity can be interpreted as a paradifferential version of this pull-back which ``selects" the high frequencies of $u$ compared to $\Phi_t$. Thus 
 \[
 \frac{d}{dt} \left(\nabla \times T_{[D\Phi_t]^{-1}}\left(\Phi_t-Id\right),\chi(\epsilon D)\omega_0\right)_{L^2}\approx \left(\nabla \times T_{[D\Phi_t]^{-1}}\Phi_t^* u,\chi(\epsilon D)\omega_0\right)_{L^2},
 \]
and the key observation is that the projection of the curl of the ``para"pull-back of the velocity to the high frequencies  of the initial vorticity is positive to leading order.

Another interpretation of the dynamical phenomena captured in the previous theorem which is essentially the idea behind the proof and the key observation in \cite{ShnLyap} is the following. We write 
\[
\frac{d}{dt}\Phi_t=[\nabla^{\perp}\Delta^{-1}\left(\omega_0\circ \Phi_t^{-1} \right)]\circ \Phi_t=\left(\nabla^{\perp}\Delta^{-1}\right)^{*,\Phi_t}\omega_0,
\]
where $\left(\nabla^{\perp}\Delta^{-1}\right)^{*,\Phi_t}$ is the pulled back operator by $\Phi_t$. Thus morally the main term of the left hand is $\nabla \times T_{[D\Phi_t]^{-1}}\left(\nabla^{\perp}\Delta^{-1}\right)^{*,\Phi_t}$. Now note that initially 
\[
\nabla \times T_{[D\Phi_t]^{-1}}\left(\nabla^{\perp}\Delta^{-1}\right)^{*,\Phi_t}_{|t=0}=\nabla \times T_{Id}\nabla^{\perp}\Delta^{-1}\approx Id,
\]
thus the identity given in Theorem \ref{thm: intro lyap} can be interpreted as a measure of how the flow ``twists" the identity. More precisely the use of the semi-classical cut-off in frequency, $\chi(\epsilon D)$, gives that the left hand-side of the identity given in Theorem \ref{thm: intro lyap} is the study of the principal symbol of the operator $\nabla \times T_{[D\Phi_t]^{-1}}\left(\nabla^{\perp}\Delta^{-1}\right)^{*,\Phi_t}$. The right hand side then gives that in the small scales ($\epsilon \to 0$) the Lagrangian flow $\Phi_t$ is consistently getting farther and farther from the identity, that is small scales are continuously being created.

The use of $T_{[D\Phi_t]^{-1}}$ in the previous operator is not only the natural factor appearing in the pull-back of a velocity field $u$ by a flow $\Phi_t$, it also concretly brings into play a key cancellation in the principal symbol that can be seen as follows:
\[
\frac{d}{dt}\left([D\Phi_t]^{-1}\Phi_t\right)=[D\Phi_t]^{-1}\left(u\circ \Phi_t-Du\circ\Phi_t \Phi_t \right).
\]
The right hand side is of the form $G(x)=F(x)-F'(x)x$ and we note the cancellation $G'(x)=-F''(x)x$. To get useful quantitative bounds from this cancellation in the high frequencies it is natural to use paraproducts and paracomposition. Then the formula obtained on the right hand side in Theorem \ref{thm: intro lyap} can be seen as a direct consequence of this cancellation combined with the algebra property of pseudodifferential and paradifferential operators combined with the fact that restricted to principal symbols this algebra becomes commutative.

\begin{remark}
The proof of the previous theorem applies more generally for $\chi(\epsilon D)$ replaced with $a(x,\epsilon D)$ with $a$ in the (standard) H\"ormander symbol class $S^0_{1,0}(\mathbb{R}^2)$ and $T_a$ with $a$ in the paradifferential symbol class $\Gamma_0^1(\mathbb{R}^2)$. The control on $a$ needed in the residual term is it's symbolic semi-norm $M_1^0(a;2)$ given in Definition \ref{def: pseudo symb}. Note that a large number of such Lyapunov functions can be constructed by changing the choice of $a$. 
\end{remark}

An immediate corollary of Theorem \ref{thm: intro lyap} is the following blow up result. 
\begin{corollary}\label{cor: blw up}
  Consider  $\omega_0\in H^{s}$ with $s>1$ verifying \eqref{eq:decay fourier hyp} then $\left\Vert D\Phi_t-Id\right\Vert_{\omega_0}$ blows up at least like $t^{\frac{1}{3}}$ in infinite time. 
\end{corollary}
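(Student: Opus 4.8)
The plan is to integrate the identity in Theorem \ref{thm: intro lyap} in time, extract the positive leading term, and use the a priori bounds of Theorem \ref{thm: specific smooth prop} together with the definition of the $\omega_0$-norm to force a lower bound on $\left\Vert D\Phi_t-Id\right\Vert_{\omega_0}$. First I would integrate the Theorem \ref{thm: intro lyap} identity from $0$ to $t$, using that at $t=0$ the pairing $\left(\nabla \times T_{[D\Phi_0]^{-1}}\Phi_0,\chi(\epsilon D)\omega_0\right)_{L^2}=\left(\nabla\times\nabla^\perp\Delta^{-1}\omega_0,\chi(\epsilon D)\omega_0\right)_{L^2}=\left(\omega_0,\chi(\epsilon D)\omega_0\right)_{L^2}$, which is comparable to $dr_{\omega_0}(\epsilon)^2$ for $\epsilon$ small (since $\chi$ is supported away from the origin, this is essentially $\|\chi(\epsilon D)\omega_0\|_{L^2}^2 \sim dr_{\omega_0}(1/(c\epsilon))^2$ up to the doubling afforded by \eqref{eq:decay fourier hyp}). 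This gives
\[
\left(\nabla \times T_{[D\Phi_t]^{-1}}\Phi_t,\chi(\epsilon D)\omega_0\right)_{L^2}=\int_0^t\left\Vert T_{\frac{\left\vert\xi\right\vert}{\left\vert [D\Phi_r]^{-1}\xi\right\vert}}\chi(\epsilon D)\omega_0\right\Vert^2_{L^2}dr + O\!\left(dr_{\omega_0}(\epsilon)^2\right)+O\!\left(C_\delta e^{Ce^{C_{\omega_0}t}}\epsilon^{\min(s-1-\delta,1)} t\, dr_{\omega_0}(\epsilon)^2\right).
\]

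Next I would show the integrand is bounded below by $c\, dr_{\omega_0}(\epsilon)^2$ for a fixed $c>0$ independent of $\epsilon$ (for $\epsilon$ small), uniformly on a time interval whose length grows: the symbol $\frac{|\xi|}{|[D\Phi_r]^{-1}\xi|}$ is bounded below by $1/\|D\Phi_r\|_{L^\infty}$, and paradifferential calculus (Gårding-type positivity / the composition formula reducing to commutative principal symbols, as invoked in the discussion preceding the corollary) turns this into $\left\Vert T_{\frac{|\xi|}{|[D\Phi_r]^{-1}\xi|}}\chi(\epsilon D)\omega_0\right\Vert_{L^2}^2 \gtrsim \|D\Phi_r\|_{L^\infty}^{-2}\|\chi(\epsilon D)\omega_0\|_{L^2}^2 \gtrsim \|D\Phi_r\|_{L^\infty}^{-2} dr_{\omega_0}(\epsilon)^2$. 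Combining with the double-exponential bound $\|D\Phi_r\|_{L^\infty}\leq e^{e^{C_{\omega_0}r}}$ from Theorem \ref{thm:welp 2d eul} would only give an integral lower bound of the form $\int_0^t e^{-2e^{C_{\omega_0}r}}dr\,dr_{\omega_0}(\epsilon)^2$, which does not grow — so the key point is rather to use Theorem \ref{thm: specific smooth prop}: since $\left\Vert D\Phi_r-Id\right\Vert_{\omega_0}\leq e^{e^{C_{\omega_0}'r}}$, we in fact have $\|D\Phi_r\|_{L^\infty}$ controlled by the $\omega_0$-norm, and more to the point the left-hand side of the integrated identity is itself $O\!\left(\left\Vert T_{[D\Phi_t]^{-1}}\Phi_t\right\Vert_{\text{some space}}\cdot dr_{\omega_0}(\epsilon)\right)$, and by the continuity of paradifferential operators this is $O\!\left(\left(1+\left\Vert D\Phi_t-Id\right\Vert_{\omega_0}\right)dr_{\omega_0}(\epsilon)^2\right)$ after dividing by $dr_{\omega_0}(\epsilon)^2$ and sending $\epsilon\to 0$. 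So taking the $\limsup_{\epsilon\to0}$ of the divided identity kills the residual (which carries the extra $\epsilon^{\min(s-1-\delta,1)}\to 0$ factor, valid under \eqref{eq:decay fourier hyp}) and yields
\[
c\int_0^t \frac{dr}{\left(1+\left\Vert D\Phi_r-Id\right\Vert_{\omega_0}\right)^2}\ \leq\ C\left(1+\left\Vert D\Phi_t-Id\right\Vert_{\omega_0}\right).
\]

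Finally I would run a standard ODE/bootstrap comparison argument on $y(t):=1+\sup_{r\leq t}\left\Vert D\Phi_r-Id\right\Vert_{\omega_0}$: from the last inequality, $c\,t\,y(t)^{-2}\leq C y(t)$, i.e. $y(t)^3\geq \frac{c}{C}t$, giving $\left\Vert D\Phi_t-Id\right\Vert_{\omega_0}\gtrsim t^{1/3}$ for $t$ large. (One must check $y$ is essentially monotone or pass to the sup, which is harmless.) The main obstacle I anticipate is the second step: making rigorous the lower bound $\left\Vert T_{\frac{|\xi|}{|[D\Phi_r]^{-1}\xi|}}\chi(\epsilon D)\omega_0\right\Vert_{L^2}^2\gtrsim dr_{\omega_0}(\epsilon)^2$ uniformly in $\epsilon\to 0$ and controlling the denominator by the $\omega_0$-norm rather than the crude $L^\infty$ double-exponential — this is exactly where the renormalization by $dr_{\omega_0}$ and the hypothesis \eqref{eq:decay fourier hyp} (algebraic Fourier decay, hence the doubling/regular-variation property of $dr_{\omega_0}$) must be used, and where the microlocal scalar product $L_{\chi,\omega_0}(\Phi_t)$ and its positivity to leading order do the real work. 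The $t^{1/3}$ exponent is then simply the algebraic cost of the quadratic feedback $\dot{(\cdot)}\sim(\cdot)^{-2}$ against linear-in-$t$ growth of the accumulated flux.
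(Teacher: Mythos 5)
Your proposal is correct and follows essentially the same route as the paper: integrate the identity of Theorem \ref{thm: intro lyap}, lower-bound the positive term via the pointwise bound $\frac{|\xi|}{|[D\Phi_r]^{-1}\xi|}\gtrsim \|D\Phi_r\|_{L^\infty}^{-1}$ and the positivity of the renormalised microlocal pairing, control $\|D\Phi_r\|_{L^\infty}$ by $1+\|D\Phi_r-Id\|_{\omega_0}$, and close with the cube-root ODE comparison. The paper distributes the powers of $y(t)=1+\|D\Phi_t-Id\|_{\omega_0}$ slightly differently (it keeps $y(t)^2$ on the left and $\int_0^t y(r)^{-1}dr$ on the right, whereas you put $y(t)$ against $\int_0^t y(r)^{-2}dr$), but both ledgers yield $y(t)^3\gtrsim t$ and hence the stated $t^{1/3}$ growth.
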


We note that by Proposition \ref{prop: charac norm omeg} that if $dr_{\omega_0}(\epsilon)\underset{\epsilon \to 0}{\sim}\epsilon^s$ then $\left\Vert \cdot \right\Vert_{B^s_{2,\infty}}$ and $\left\Vert \cdot \right\Vert_{\omega_0}$ are equivalent. In particular the previous corollary gives the growth of $\left\Vert \Phi_t-Id \right\Vert_{B^{s+1}_{2,\infty}} $ for $\omega_0\in B^s_{2,\infty}$ with $dr_{\omega_0}(\epsilon)\underset{\epsilon \to 0}{\sim}\epsilon^s$. 
\subsection{Active scalar equations}
    The construction given in Theorem \ref{thm: intro lyap} is not special to the 2d Euler equations but can be adapted to a large class of active scalar equations as for example the generalised SQG equations given by 
    \begin{equation}
    \label{eq: sqg1} \partial_t \Theta + u\cdot\nabla \Theta=0,
\end{equation}
\begin{equation}\label{eq: sqg2}
    u=\nabla^\perp (-\Delta)^{-\frac{\alpha}{2}} \Theta.
\end{equation} 
Note that for $\alpha=2$ we get the 2d Euler equations. The results in this paper generalise in verbatim to give the following.
\begin{theorem}\label{thm:sqg}
     Consider $\alpha>1$, $s>3-\alpha$, $\Theta_0\in H^{s}\left( \mathbb{R}^2\right)$ and $\Theta\in C\left([0,T],H^{s}\left( \mathbb{R}^2\right)\right)$ the unique solution of \eqref{eq: sqg1}-\eqref{eq: sqg2} with initial data $\Theta_0$ for some $T\geq \frac{C_\delta}{\left\Vert \Theta_0 \right\Vert_{H^{3-\alpha-\delta}}}$ with $0<\delta<s+\alpha-3$ and $C_\delta>0$ is a constant depending only on $\delta$. Suppose moreover that $\Theta_0$ verifies \eqref{eq:decay fourier hyp}, then there exists a constant $C_{\Theta_0}$ such that for $0\leq t\leq T$
    \[
    \left\Vert \Theta(t) \right\Vert_{\Theta_0}+\left\Vert D\Phi_t -Id\right\Vert_{\Theta_0}\leq  \exp\left(C_{\Theta_0} \int^t_0\left\Vert\nabla u(s)\right\Vert_{L^\infty}ds \right).
    \]
    Moreover fix $\chi(\xi)\in C^\infty_0( \mathbb{R}^2\setminus B(0,1))$ then there exists a constant $C_{\Theta_0}$ such that for $\epsilon \geq 0$
\begin{multline*}
 \frac{d}{dt}\left((-\Delta)^{\frac{\alpha-2}{2}}\nabla \times T_{[D\Phi_t]^{-1}}\Phi_t,\chi(\epsilon D)\Theta_0\right)_{L^2}=-\left\Vert T_{\frac{\left\vert\xi\right\vert^{\frac{\alpha}{2}}}{\left\vert [D\Phi_t]^{-1}\xi\right\vert^{\frac{\alpha}{2}}}}\chi(\epsilon D)\Theta_0\right\Vert^2_{L^2}\\
 + O\left(C_\delta \exp\left(C_{\Theta_0} \int^t_0\left\Vert\nabla u(s)\right\Vert_{L^\infty}ds \right)\epsilon^{\min(s+\alpha-3-\delta,1,\alpha-1)}  dr_{\Theta_0}(\epsilon)^2\right),
 \end{multline*}
for all $0<\delta<s+\alpha-3$ and $C_\delta>0$ is constant depending only on $\delta$.
\end{theorem}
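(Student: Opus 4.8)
The plan is to rerun the proofs of Theorems~\ref{thm: specific smooth prop} and~\ref{thm: intro lyap} \emph{verbatim}, replacing the vorticity $\omega$ by $\Theta$ and the Biot--Savart operator $\nabla^\perp\Delta^{-1}$ of order $-1$ by $\nabla^\perp(-\Delta)^{-\alpha/2}$ of order $1-\alpha$, carrying the order shift through every estimate. First I would establish the generalised SQG analogue of Theorem~\ref{thm: specific smooth prop}: paralinearise the transport equation \eqref{eq: sqg1}--\eqref{eq: sqg2}, invoke the characterisation of $\|\cdot\|_{\Theta_0}$ (the analogue of Proposition~\ref{prop: charac norm omeg}, which only uses \eqref{eq:decay fourier hyp} and is insensitive to the equation), and close a Gronwall inequality for $\|\Theta(t)\|_{\Theta_0}+\|D\Phi_t-Id\|_{\Theta_0}$ with the paradifferential commutator bounds. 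The only structural changes are: $u=\nabla^\perp(-\Delta)^{-\alpha/2}\Theta$ now lies in $H^{s+\alpha-1}$, so $u$ is smoother than $\Theta$ precisely when $\alpha>1$, which forces the sub-critical threshold $s>3-\alpha$; and since for $\alpha<2$ there is no Beale--Kato--Majda-type a priori control of $\|\nabla u\|_{L^\infty}$ in terms of conserved quantities, the double exponential of Theorem~\ref{thm:welp 2d eul} must simply be kept in the form $\exp\!\big(C_{\Theta_0}\int_0^t\|\nabla u(s)\|_{L^\infty}\,ds\big)$. The lower bound $T\geq C_\delta\|\Theta_0\|_{H^{3-\alpha-\delta}}^{-1}$ on the interval of validity comes from the standard local well-posedness theory for \eqref{eq: sqg1}--\eqref{eq: sqg2} together with a logarithmic-interpolation bootstrap, in which only a slightly sub-threshold Sobolev norm of the data enters the denominator.

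\textbf{The Lyapunov identity.} I would then differentiate $(-\Delta)^{(\alpha-2)/2}\nabla\times T_{[D\Phi_t]^{-1}}\Phi_t$ in time exactly as in the proof of Theorem~\ref{thm: intro lyap}, writing $\frac{d}{dt}\Phi_t=\big(\nabla^\perp(-\Delta)^{-\alpha/2}\big)^{*,\Phi_t}\Theta_0$ for the pulled-back Biot--Savart operator applied to $\Theta_0=\Theta\circ\Phi_t$. The prefactor $(-\Delta)^{(\alpha-2)/2}$ is exactly the renormalisation making this operator equal $-\mathrm{Id}$ on $\mathrm{supp}\,\chi(\epsilon D)$ at $t=0$ (the minus sign being the usual mismatch between the convention $\Delta\psi=\omega$ in \eqref{eq: 2dEuler2} and $u=\nabla^\perp(-\Delta)^{-\alpha/2}\Theta$ in \eqref{eq: sqg2}, and the reason the leading term on the right carries a minus). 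The cancellation $\frac{d}{dt}\big([D\Phi_t]^{-1}\Phi_t\big)=[D\Phi_t]^{-1}\big(u\circ\Phi_t-(Du\circ\Phi_t)\Phi_t\big)$ of the form $G=F-F'x$, $G'=-F''x$, is then exploited through paraproducts and the paracomposition of Theorem~\ref{thm: def para comp}; restricted to principal symbols the pseudo/paradifferential algebra is commutative, the matrix factor $T_{[D\Phi_t]^{-1}}$ together with the vector part of the pull-back cancels because $\Phi_t$ is area-preserving ($\mathrm{div}\,u=0$), the Laplacian produced by $\nabla\times\nabla^\perp$ stays in the ambient frame while the inverse fractional Laplacian pulls back, and one is left with principal symbol $-|\xi|^2/|[D\Phi_t]^{-1}\xi|^{\alpha}$, hence $-|\xi|^{\alpha}/|[D\Phi_t]^{-1}\xi|^{\alpha}=-m(t,x,\xi)^2$ after the prefactor, with $m=|\xi|^{\alpha/2}/|[D\Phi_t]^{-1}\xi|^{\alpha/2}$. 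Pairing against $\chi(\epsilon D)\Theta_0$, symmetrising $\mathrm{Op}(m^2)$ as $T_m^*T_m$ modulo lower order, and using the semiclassical nature of $\chi(\epsilon D)$ (whose commutators with order-zero operators gain a factor $\epsilon$) yields the leading term $-\big\|T_m\chi(\epsilon D)\Theta_0\big\|_{L^2}^2$; note that on $\mathrm{supp}\,\chi(\epsilon D)\subset\{|\xi|\ge 1/\epsilon\}$ the multiplier $|\xi|^{\alpha-2}$ is smooth, so the low-frequency singularity of the fractional Laplacian never enters.

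\textbf{The error term and the main obstacle.} The residual is a matter of bookkeeping three independent sources of smoothing, each producing a power of $\epsilon$ against $dr_{\Theta_0}(\epsilon)^2$: the paralinearisation and paracomposition remainders gain one full derivative, giving $\epsilon^1$; the quantity $D\Phi_t$ lies only in $H^{s+\alpha-2}\hookrightarrow C^{s+\alpha-3}$ in two dimensions, so terms whose size is governed by the Hölder regularity of the symbol $[D\Phi_t]^{-1}$ gain $\epsilon^{s+\alpha-3-\delta}$, the $\delta>0$ absorbing the Sobolev--Hölder embedding loss; and the Biot--Savart operator $\nabla^\perp(-\Delta)^{-\alpha/2}$ smooths by order $\alpha-1$, so the terms controlled by its smoothing contribute only $\epsilon^{\alpha-1}$ --- this last constraint, invisible in the Euler case since there $\alpha-1$ coincides with the generic gain $1$, is the genuinely new one and yields the third entry of $\min(s+\alpha-3-\delta,1,\alpha-1)$. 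Each such term carries Sobolev norms of $\Theta$ and $D\Phi_t$, bounded by $\exp\!\big(C_{\Theta_0}\int_0^t\|\nabla u\|_{L^\infty}\big)$ from the first part. I expect the main obstacle to be the paracomposition step for the \emph{non-local} fractional operator $(-\Delta)^{-\alpha/2}$: one must verify that its conjugation by the merely $C^{s+\alpha-2}$-regular flow $\Phi_t$ has principal symbol given by the cotangent transformation $|[D\Phi_t]^{-1}\xi|^{-\alpha}$ with a remainder of precisely the claimed order, and that the non-locality does not degrade the exponent below $\min(s+\alpha-3-\delta,1,\alpha-1)$; a secondary subtlety is securing the lifespan $T\gtrsim\|\Theta_0\|_{H^{3-\alpha-\delta}}^{-1}$ while genuinely working below the natural $H^s$ threshold.
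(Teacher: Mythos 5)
Your proposal is essentially the paper's own argument: the paper offers no separate proof of Theorem~\ref{thm:sqg} beyond the assertion that the proofs of Theorems~\ref{thm: specific smooth prop} and~\ref{thm: intro lyap} ``generalise in verbatim,'' which is exactly the route you take, with the order shift from $\nabla^\perp\Delta^{-1}$ to $\nabla^\perp(-\Delta)^{-\alpha/2}$ tracked through each estimate. Your accounting of the new features --- the sign flip from the convention $\Delta\psi=\omega$ versus $(-\Delta)^{-\alpha/2}$, the area-preservation cancellation $[D\Phi_t]^{-1}\bigl(([D\Phi_t]^{-1})^{t}\xi\bigr)^{\perp}=\xi^{\perp}$ leading to the symbol $-|\xi|^{\alpha}/|[D\Phi_t]^{-1}\xi|^{\alpha}$, the replacement of the double exponential by $\exp\bigl(C\int_0^t\|\nabla u\|_{L^\infty}\bigr)$ in the absence of a Yudovich-type bound, and the three-way minimum in the error exponent --- is consistent with the statement and correctly identifies where the genuinely new constraint $\alpha-1$ enters.
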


\subsection{Organisation of the paper}
In Section \ref{sec:prop exact smooth} we give the proof of Theorem \ref{thm: specific smooth prop}. In Section \ref{sec: Wig tran} we discuss the rates of convergence of semi-classical measures and show that they are exactly given by $dr_f$. In Section \ref{sec: lyp} we give the proof of Theorem \ref{thm: intro lyap} and Corollary \ref{cor: blw up}. Finally in Appendix \ref{sec: micr anal} we give a review of the microlocal analysis notions needed in this article.
\section*{Acknowledgements}
The author acknowledges funding from the UKRI grant SWAT. I would like to thank T. Alazard for suggesting to go beyond the Besov setting of the previous construction and his generosity in sharing techniques and knowledge, J. Bedrossian for having suggested to look for singularity formation using Shnirelman's construction, T. M. Elgindi for helpful discussions and remarks that helped the work take shape and N. Tzvetkov for an insightful discussion that helped better my understanding of the new norm used here. The starting idea of this work emanated from T. Drivas's suggestion and encouragement to study A. Shnirelman's construction, he also contributed valuable input along the realisation of this project for which I am deeply indebted.
\section{Exact smoothness propagation in 2d Euler}\label{sec:prop exact smooth}
In this section we give the proof of Theorem \ref{thm: specific smooth prop}. First for $\left\Vert \omega\right\Vert_{\omega_0}$, we paralinearise the Euler equation to get 
\[
\partial_t \omega+T_{u}\cdot \nabla \omega=-T_{\nabla \omega}\cdot u-R(u,\nabla \omega),
\]
thus commuting with $\chi(\epsilon D)$ we get
\[
\partial_t \chi(\epsilon D)\omega+T_{u}\cdot \nabla \chi(\epsilon D) \omega=-\left[\chi(\epsilon D),T_{u}\cdot \nabla\right]\omega-\chi(\epsilon D)T_{\nabla \omega}\cdot u-\chi(\epsilon D)R(u,\nabla \omega).
\]
Now by the continuity of paradifferential operators given in Theorem \ref{thm: para continuity}, combined with the symbolic calculus given in Theorem \ref{thm: symb calc para}, the spectral localisation property of paradifferential operators \ref{prop: para act spctrm} we see that 
\[
\left\Vert \left[\chi(\epsilon D),T_{u}\cdot \nabla\right]\omega \right\Vert_{L^2}\leq C \left \Vert Du\right\Vert_{L^\infty}dr_{\omega_0}(\lambda \epsilon)\leq C \left \Vert Du\right\Vert_{L^\infty}\left\Vert\omega\right\Vert_{\omega_0}dr_{\omega_0}(\lambda \epsilon),
\]
thus 
\[
\left\Vert \left[\chi(\epsilon D),T_{u}\cdot \nabla\right]\omega \right\Vert_{L^2}\leq C_{\omega_0} \left \Vert Du\right\Vert_{L^\infty}\left\Vert\omega\right\Vert_{\omega_0}dr_{\omega_0}(\epsilon).
\]
The previous estimate holds analogously for $\chi(\epsilon D)T_{\nabla \omega}\cdot u$ and $\chi(\epsilon D)R(u,\nabla \omega)$ and the desired result follows from a standard energy estimate.  To get the estimate on $\left\Vert D\Phi_t-x\right\Vert_{\omega_0}$ we need a more refined characterisation of $\left\Vert \cdot \right\Vert_{\omega_0}$ and the action of multiplication and composition on this norm.
\subsection{Analysis of the norm $\left\Vert\cdot  \right\Vert_{\omega_0}$}
First we show that \eqref{eq:decay fourier hyp} is equivalent to having finite smoothness.
\begin{proposition}\label{prop: eqv fr dec}
    Consider $f\in L^2(\mathbb{R}^2)$ verifying \eqref{eq:decay fourier hyp} then there exits $s> 0$ such that $f\notin H^s(\mathbb{R}^2)$. Conversely if $f\in L^2(\mathbb{R}^2)$ and there exists $s>0$ such that $f\notin H^s(\mathbb{R}^2)$ then $f$ verifies \eqref{eq:decay fourier hyp} moreover $C_f$ can be chosen in the form 
    \[
    C_f(\lambda)\leq C_\delta \left\Vert f \right\Vert_{H^{s_f}}  \lambda^{s_f+\delta},
    \]
    where $s_f\geq 0$ is the largest index such that $f\in H^{s_f}(\mathbb{R}^2)\setminus H^{s_f+\delta}(\mathbb{R}^2) $ for all $\delta>0$.
\end{proposition}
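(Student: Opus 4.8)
The statement asserts the equivalence of a qualitative fact — having finite Sobolev regularity — with the scaling inequality \eqref{eq:decay fourier hyp} for the tail function $dr_f$. The plan is to first record a dictionary translating membership in $H^\sigma$ into the decay rate of $dr_f(\epsilon)$ as $\epsilon\to 0$, then run a short iteration for the forward implication and a two‑sided estimate for the converse.

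For the dictionary, Plancherel gives, for $\epsilon\le 1$ and $f\in H^\sigma$, that $|\xi|\ge 1/\epsilon$ on the domain of integration, hence
\[
dr_f(\epsilon)^2=\int_{|\xi|>1/\epsilon}|\mathscr F(f)|^2\,d\xi\le\epsilon^{2\sigma}\int_{|\xi|>1/\epsilon}|\xi|^{2\sigma}|\mathscr F(f)|^2\,d\xi\le\epsilon^{2\sigma}\|f\|_{\dot H^\sigma}^2,
\]
so $dr_f(\epsilon)\le\|f\|_{\dot H^\sigma}\epsilon^\sigma$; conversely, summing over Littlewood–Paley blocks $\Delta_j$, one has $\|f\|_{\dot H^{\sigma'}}^2\lesssim\sum_{j\ge 0}2^{2j\sigma'}dr_f(2^{-j+1})^2$, so a bound $dr_f(\epsilon)\lesssim\epsilon^{\sigma}$ forces $f\in H^{\sigma'}$ for every $\sigma'<\sigma$. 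Thus $s_f:=\sup\{\sigma\ge 0:f\in H^\sigma\}$ is exactly the decay exponent of $dr_f$, and "$f\notin H^{s}$ for some $s>0$'' is the same as $s_f<\infty$.

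For the forward implication, assume \eqref{eq:decay fourier hyp} holds with $C_f(\lambda)>0$ for some $\lambda<1$. Fix $\epsilon_0$ with $dr_f(\epsilon_0)>0$ (which exists unless $f\equiv 0$); iterating \eqref{eq:decay fourier hyp} gives $dr_f(\lambda^n\epsilon_0)\ge C_f(\lambda)^n\,dr_f(\epsilon_0)$ for all $n\ge 1$. If $f\in H^\sigma$, the dictionary yields $dr_f(\lambda^n\epsilon_0)\le\|f\|_{\dot H^\sigma}(\lambda^n\epsilon_0)^\sigma$; combining the two, $\big(C_f(\lambda)\lambda^{-\sigma}\big)^n$ stays bounded in $n$, which forces $C_f(\lambda)\le\lambda^\sigma$, i.e. $\sigma\le\log C_f(\lambda)/\log\lambda<\infty$. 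Hence $\{\sigma:f\in H^\sigma\}$ is bounded, so $f\notin H^s$ for $s$ large, with the quantitative bound $s_f\le\log C_f(\lambda)/\log\lambda$ falling out of the same inequality.

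For the converse, take $s_f<\infty$ and fix a small $\delta\in(0,s_f)$ (the case $s_f=0$ being handled similarly with $\dot H^{-\delta}$-type bounds). From $f\in H^{s_f-\delta}$ and the dictionary we have the upper bound $dr_f(\epsilon)\le C_\delta\|f\|_{H^{s_f-\delta}}\epsilon^{s_f-\delta}$; what remains is a matching control from below. Rewriting \eqref{eq:decay fourier hyp} after squaring, one needs, uniformly in $\epsilon$, that the annular mass $\int_{1/\epsilon<|\xi|\le 1/(\lambda\epsilon)}|\mathscr F(f)|^2\,d\xi$ is at most a fixed fraction $1-C_f(\lambda)^2$ of the tail $dr_f(\epsilon)^2$; dyadically, via $dr_f(2^{-j})^2\approx\sum_{k\ge j}\|\Delta_k f\|_{L^2}^2$, this is the statement that each block $\|\Delta_j f\|_{L^2}^2$ is a definite fraction of its own tail, uniformly in $j$. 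This is the step I expect to be the main obstacle, and it is where the \emph{exactness} of the regularity — $f\notin H^{s_f+\delta}$ for every $\delta>0$, not merely $s_f<\infty$ — has to be used, since Sobolev non‑membership by itself only bounds dyadic blocks on average, i.e. along a subsequence of scales, so one must rule out "staircase'' decay of $dr_f$ to upgrade to a uniform comparison. Granting this uniform fraction bound, the admissible constant can be taken in the asserted form by feeding the upper estimate $dr_f(\lambda\epsilon)\le\|f\|_{H^{s_f}}(\lambda\epsilon)^{s_f}$ into the optimal constant $C_f(\lambda)=\inf_{\epsilon}dr_f(\lambda\epsilon)/dr_f(\epsilon)$ together with the lower control on $dr_f(\epsilon)$ just obtained; both displayed conclusions in the statement then follow by combining this with the two‑sided decay bounds from the dictionary.
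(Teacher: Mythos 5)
Your forward implication is complete and correct, and is in fact a cleaner route than the paper's: where the paper's Lemma~\ref{lem:algb dec} passes through the submultiplicativity of $\lambda\mapsto C_f(\lambda)$ and Fekete's subadditive lemma to extract an algebraic lower bound on $C_f$, and hence on $dr_f$, you simply iterate \eqref{eq:decay fourier hyp} along the geometric sequence $\lambda^n\epsilon_0$ and play $dr_f(\lambda^n\epsilon_0)\ge C_f(\lambda)^n dr_f(\epsilon_0)$ against the Plancherel bound $dr_f(\epsilon)\le\Vert f\Vert_{\dot H^\sigma}\epsilon^\sigma$. This gives $\sigma\le \log C_f(\lambda)/\log\lambda$ directly, which is exactly the content of the first half of the proposition, with a quantitative bound on $s_f$ as a bonus. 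Your ``dictionary'' between the decay of $dr_f$ and Sobolev membership is also correct and is implicitly what the paper uses.

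The converse, however, is not proved: you explicitly ``grant'' the uniform fraction bound, i.e.\ the pointwise-in-$\epsilon$ lower bound $dr_f(\lambda\epsilon)\ge c_\lambda\, dr_f(\epsilon)$, and that bound \emph{is} the entire content of this direction. Your diagnosis of why it is delicate is accurate: $f\notin H^{s_f+\delta}$ only yields $\limsup_{\epsilon\to 0} dr_f(\epsilon)\,\epsilon^{-s_f-\delta}=+\infty$, a statement along a subsequence of scales, and the monotonicity of $dr_f$ does not upgrade this to $dr_f(\epsilon)\gtrsim\epsilon^{s_f+\delta}$ for every $\epsilon$. The obstruction is real: taking $\mathscr{F}(f)$ supported on lacunary annuli $\{|\xi|\sim 2^{2^k}\}$ with $L^2$-mass $a_k=2^{-2s_0 2^{k}}k^{-2}$ on the $k$-th annulus gives $f\in H^{s_0}\setminus H^{s_0+\delta}$ for every $\delta>0$, yet $\inf_\epsilon dr_f(\lambda\epsilon)/dr_f(\epsilon)=0$ already for $\lambda=1/8$, since $dr_f$ is a ``staircase'' whose consecutive steps have ratio tending to $0$. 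So the uniform comparison cannot follow from Sobolev non-membership alone; some additional two-sided, non-lacunary control on $dr_f$ must be assumed. You should also be aware that the paper's own proof of this half (the second part of Lemma~\ref{lem:algb dec}) passes over the same point: it combines the upper bound $F(x_n)\le x_n^{-\alpha+\epsilon_n}$ with an implicit matching pointwise lower bound $F(\lambda x_n)\ge C(\lambda x_n)^{-\alpha}$, which is precisely the uniform comparison in question. The step you flagged as the main obstacle is therefore not one you can import from the paper; a complete argument requires strengthening the hypothesis (e.g.\ to a genuine two-sided power-type decay of $dr_f$) rather than a further technical effort.
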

\begin{proof}[Proof of Proposition \ref{prop: eqv fr dec}]
The proof follows from the following lemma.
    \begin{lemma}\label{lem:algb dec}
    Consider a continuous decreasing function $F:\mathbb{R}_+\to \mathbb{R}_+$ such that $\displaystyle \lim_{x\to +\infty}F(x)=0$. Suppose that for all $\lambda\geq 1$, $\displaystyle C(\lambda)=\inf_{x\in \mathbb{R}_+}\frac{F(\lambda x)}{F(x)}>0$. Then there exists $\alpha$ and $ C_\alpha\geq 0$ such that $\displaystyle  C(\lambda) \geq \frac{C_\alpha}{1+\lambda^{\alpha}}$. 
Conversely if there exits $\alpha>0$ such that $F(x)\geq \frac{C_\alpha}{1+x^{\alpha}}$. Then for all $\lambda\geq 1$, $\displaystyle C(\lambda)=\inf_{x\in \mathbb{R}_+}\frac{F(\lambda x)}{F(x)}>0$.
\end{lemma}
\begin{proof}[Poof of Lemma \ref{lem:algb dec}]
    We note that $C$ is a decreasing function that goes from $1$ to $0$ which moreover verifies a Cauchy functional inequality of the form 
    \[
    C(\lambda_1)C(\lambda_2)\leq C(\lambda_1\lambda_2).
    \]
    For $x\geq 0$ we define $f(x)=-\ln(C(e^{x}))$, which is well defined as $C(\lambda)>0$. Then we get that $f$ is a positive increasing subadditive function
    \[
    f(x+y)\leq f(x)+f(y),
    \]
    thus $\lim_{x\to +\infty }\frac{f(x)}{x}=\inf_{x\geq 0}\frac{f(x)}{x}=\alpha\geq 0$ which gives the desired result.

For the second part of the lemma we proceed by contradiction and suppose that there exists $\lambda \geq 1$ and a sequence $x_n\underset{n\to +\infty}{\to}+\infty$ such that 
    \[
    \frac{F(\lambda x_n)}{F(x_n)}\underset{n\to+\infty}{\to}0.
    \]
     By hypothesis there exists $\epsilon_n$ such that $F(x_n)\leq x_n^{-\alpha+\epsilon_n}$ thus \[\frac{F(\lambda x_n)}{F(x_n)}\geq \lambda^\alpha \underbrace{x_n^{\epsilon_n}}_{\geq 1}\geq\lambda^\alpha,\]
    which is a contradiction. 
\end{proof}
\end{proof}
Next we give a characterisation of $\left\Vert \cdot \right\Vert_{\omega_0}$.
\begin{proposition}\label{prop: charac norm omeg}
    Consider $\omega_0\in L^2(\mathbb{R}^2)$ verifying \eqref{eq:decay fourier hyp} then there exist a constant $C_{\omega_0}\geq 1$ such that 
\[
\frac{1}{C_{\omega_0}}\left\Vert f \right\Vert_{\omega_0}\leq \sup_{k\geq 0} \frac{1}{dr_{\omega_0(2^{-k})}}\left\Vert \Delta_k f \right\Vert_{L^2}:=\left\vert f \right\vert_{\omega_0}\leq C_{\omega_0}\left\Vert f \right\Vert_{\omega_0}.
\]
    where $\Delta_k f$ are the Littlewood-Paley projectors given in Definition \ref{def: LP decomp}. 
\end{proposition}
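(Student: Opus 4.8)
The plan is to compare the two quantities by relating the truncated-tail norm $dr_f$ to the dyadic Littlewood--Paley blocks $\Delta_k f$, using the Fourier-side definitions of both and the key algebraic-decay property \eqref{eq:decay fourier hyp} encoded in $dr_{\omega_0}$. First I would unwind the definitions. By Plancherel, $\left\Vert \Delta_k f\right\Vert_{L^2}^2 = \int_{\mathbb{R}^2}\varphi(2^{-k}\xi)^2|\mathscr{F}(f)(\xi)|^2 d\xi$, which up to universal constants is comparable to $\int_{2^{k-1}\le|\xi|\le 2^{k+1}}|\mathscr{F}(f)(\xi)|^2 d\xi = dr_f(2^{-k+1})^2 - dr_f(2^{-k-1})^2$. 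Thus $\left\Vert \Delta_k f\right\Vert_{L^2} \lesssim dr_f(2^{-k+1})$, and summing a geometric-type bound, $dr_f(2^{-k})^2 = \int_{|\xi|\ge 2^k}|\mathscr{F}(f)(\xi)|^2 d\xi \lesssim \sum_{j\ge k}\left\Vert \Delta_j f\right\Vert_{L^2}^2$ (plus possibly one high block boundary term). These are the two elementary bridges I will need in both directions.

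For the easy direction, $|f|_{\omega_0}\lesssim \left\Vert f\right\Vert_{\omega_0}$: from $\left\Vert \Delta_k f\right\Vert_{L^2}\lesssim dr_f(2^{-k+1})\le \left\Vert f\right\Vert_{\omega_0}\,dr_{\omega_0}(2^{-k+1})$, I then use \eqref{eq:decay fourier hyp} with $\lambda = \tfrac12$ to absorb the shift: $dr_{\omega_0}(2^{-k+1}) = dr_{\omega_0}(2\cdot 2^{-k})\le C_{\omega_0}(1/2)^{-1}dr_{\omega_0}(2^{-k})$, giving $\left\Vert \Delta_k f\right\Vert_{L^2}/dr_{\omega_0}(2^{-k})\lesssim_{\omega_0} \left\Vert f\right\Vert_{\omega_0}$ uniformly in $k$; take the supremum. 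For the reverse direction, $\left\Vert f\right\Vert_{\omega_0}\lesssim |f|_{\omega_0}$: fix $\epsilon>0$, pick $k$ with $2^{-k}\le\epsilon< 2^{-k+1}$, then $dr_f(\epsilon)\le dr_f(2^{-k})$ and by the tail-sum bound $dr_f(2^{-k})^2\lesssim \sum_{j\ge k}\left\Vert \Delta_j f\right\Vert_{L^2}^2 \le |f|_{\omega_0}^2\sum_{j\ge k}dr_{\omega_0}(2^{-j})^2$. Here is where \eqref{eq:decay fourier hyp} does real work in a second way: by Proposition \ref{prop: eqv fr dec} (or directly from Lemma \ref{lem:algb dec}) the decay of $dr_{\omega_0}$ is at most algebraic, $dr_{\omega_0}(2^{-j})\le C\,2^{-\alpha' j}$ for some $\alpha'>0$ eventually — wait, that is the wrong inequality; rather I need a \emph{lower} comparison to sum the tail. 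The correct tool is the reverse bound in Lemma \ref{lem:algb dec}: since $dr_{\omega_0}$ satisfies \eqref{eq:decay fourier hyp}, $C_{\omega_0}(\lambda)>0$ for all $\lambda<1$, which means $dr_{\omega_0}(2^{-j-1})\ge C_{\omega_0}(1/2)\,dr_{\omega_0}(2^{-j})$, i.e. the sequence $dr_{\omega_0}(2^{-j})$ decays \emph{at most} geometrically with ratio bounded below, hence $\sum_{j\ge k}dr_{\omega_0}(2^{-j})^2$ is a convergent geometric-type series dominated by $C_{\omega_0}\,dr_{\omega_0}(2^{-k})^2$. Combining, $dr_f(\epsilon)\lesssim_{\omega_0} |f|_{\omega_0}\,dr_{\omega_0}(2^{-k})\lesssim_{\omega_0}|f|_{\omega_0}\,dr_{\omega_0}(\epsilon)$, using \eqref{eq:decay fourier hyp} once more to pass from $2^{-k}$ back to $\epsilon\ge 2^{-k}/2$; taking the supremum over $\epsilon$ finishes it.

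The main obstacle — and the one genuinely non-formal point — is the summability of the tail series $\sum_{j\ge k}dr_{\omega_0}(2^{-j})^2$ and the convergence-rate statement that it is $\lesssim dr_{\omega_0}(2^{-k})^2$. Without the hypothesis \eqref{eq:decay fourier hyp} this can fail (the tail could decay arbitrarily slowly), so the whole equivalence hinges on converting \eqref{eq:decay fourier hyp} into the geometric-ratio lower bound $dr_{\omega_0}(2^{-j-1})\ge c\,dr_{\omega_0}(2^{-j})$ with $c=C_{\omega_0}(1/2)>0$ fixed, which is precisely what the second half of Lemma \ref{lem:algb dec} provides. Everything else — Plancherel, the finite-overlap of Littlewood--Paley supports, the dyadic bracketing of a continuous $\epsilon$ between consecutive powers of two — is routine, and all the $\lambda$-shifts in $dr_{\omega_0}$ are handled by a single invocation of \eqref{eq:decay fourier hyp} with $\lambda=1/2$. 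I would present the two bridge inequalities as a short preliminary computation, then the two directions as two short paragraphs, flagging the use of Lemma \ref{lem:algb dec} at the summation step.
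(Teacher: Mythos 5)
Your preliminary ``bridge'' inequalities and the easy direction $\left\vert f\right\vert_{\omega_0}\lesssim\left\Vert f\right\Vert_{\omega_0}$ are correct and follow essentially the paper's route (which passes through the intermediate norm $\sup_k dr_{\omega_0}(2^{-k})^{-1}\sum_{i\ge k}\left\Vert\Delta_i f\right\Vert_{L^2}$). The gap sits exactly at the step you yourself single out as the only non-formal one, and your justification there is logically backwards. To get $\sum_{j\ge k}dr_{\omega_0}(2^{-j})^2\le C\,dr_{\omega_0}(2^{-k})^2$ you need the sequence $dr_{\omega_0}(2^{-j})$ to decay \emph{at least} geometrically, i.e.\ an \emph{upper} bound of the form $dr_{\omega_0}(2^{-j-N})\le c\,dr_{\omega_0}(2^{-j})$ with $c<1$. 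What \eqref{eq:decay fourier hyp} and Lemma \ref{lem:algb dec} provide is the opposite inequality, the \emph{lower} bound $dr_{\omega_0}(2^{-j-1})\ge C_{\omega_0}(1/2)\,dr_{\omega_0}(2^{-j})$ (decay \emph{at most} geometric); a ratio bounded below by a positive constant is perfectly compatible with the tail sum being much larger than its first term. Concretely, $dr_{\omega_0}(2^{-j})=1/j$ satisfies \eqref{eq:decay fourier hyp}, since $dr_{\omega_0}(2^{-j-m})/dr_{\omega_0}(2^{-j})=j/(j+m)\ge 1/(1+m)$ is bounded below for each fixed $m$, yet $\sum_{j\ge k}1/j^2\sim 1/k$, which is not $O(1/k^2)$. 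So the implication ``ratio bounded below $\Rightarrow$ tail dominated by its first term'' is false, and your hard direction $\left\Vert f\right\Vert_{\omega_0}\lesssim\left\vert f\right\vert_{\omega_0}$ does not close as written.

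The input that actually makes the tail summable is not \eqref{eq:decay fourier hyp} but the positive Sobolev smoothness of $\omega_0$: at this point the paper invokes Proposition \ref{prop: eqv fr dec} to write $dr_{\omega_0}(2^{-i-k})\le C_{\omega_0}\,2^{-\alpha_{\omega_0}i}\,dr_{\omega_0}(2^{-k})$ with $\alpha_{\omega_0}>0$, an exponent reflecting that $\omega_0$ lies in $H^{s}$ for some $s>0$ (in every application of the proposition one has $\omega_0\in H^{s}$ with $s>1$). You should replace your invocation of Lemma \ref{lem:algb dec} by this at-least-geometric upper bound on the ratio $dr_{\omega_0}(2^{-i-k})/dr_{\omega_0}(2^{-k})$; once that is in hand, the rest of your argument --- Plancherel, finite overlap of the Littlewood--Paley supports, dyadic bracketing of $\epsilon$, and the single use of \eqref{eq:decay fourier hyp} with $\lambda=1/2$ to absorb shifts by a factor of $2$ --- goes through and reproduces the paper's proof.
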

\begin{proof}
We introduce the norm 
\[
\sup_{k\geq 0} \frac{1}{dr_{\omega_0(2^{-k})}}\sum^{+\infty}_{i=k}\left\Vert \Delta_i f \right\Vert_{L^2}=\left\Vert f \right\Vert'_{\omega_0}
\]
then by the slow varying hypothesis on $dr_{\omega_0}$ given by \eqref{eq:decay fourier hyp}, $\left\Vert f \right\Vert'_{\omega_0}$ and $\left\Vert f \right\Vert_{\omega_0}$ are equivalent (we refer to Chapter 1 of \cite{taylor2000tools} for a more general treatment of spaces defined with a slow varying moduli). Next we have the immediate inequality $\left\vert f \right\vert_{\omega_0} \leq \left\Vert f \right\Vert'_{\omega_0}$ and conversely 
\[
\frac{1}{dr_{\omega_0(2^{-k})}}\sum^{+\infty}_{i=k}\left\Vert \Delta_i f \right\Vert_{L^2}\leq \left\vert f \right\vert_{\omega_0} \frac{1}{dr_{\omega_0(2^{-k})}}\sum^{+\infty}_{i=k}dr_{\omega_0}(2^{-i}).
\]
Now by Proposition \ref{prop: eqv fr dec}
\[
\sum^{+\infty}_{i=k}dr_{\omega_0}(2^{-i})=\sum^{+\infty}_{i=0}dr_{\omega_0}(2^{-i-k})\leq C_{\omega_0}dr_{\omega_0}(2^{-k})\underbrace{\sum^{+\infty}_{i=0}2^{-\alpha_{\omega_0} i}}_{<+\infty},
\]
which gives the desired result.
\end{proof}

\subsection{Actions of operators on $\left\Vert \cdot \right\Vert_{\omega_0}$}
We first give a lemma on the action of multiplication on $\left\Vert \cdot \right\Vert_{\omega_0}$.
\begin{lemma}\label{lem: comp mult}
    Consider $ f,g\in H^s\left(\mathbb{R}^2\right)$ for $s>1$ and $\omega_0\in H^\epsilon\left(\mathbb{R}^2\right)$ with $\epsilon>0$ and $\omega_0$ verifying hypothesis \eqref{eq:decay fourier hyp} and then there exists $C_{\omega_0}$ such that
    \[
    \left\Vert fg\right\Vert_{\omega_0}\leq \frac{C_{\omega_0}}{\epsilon} \left\Vert f \right\Vert_{L^\infty} \left\Vert g\right\Vert_{\omega_0}+\frac{C_{\omega_0}}{\epsilon} \left\Vert g \right\Vert_{L^\infty} \left\Vert f\right\Vert_{\omega_0}.
    \]
   More generally we have for the paraproduct and residual paraproduct given in Theorem \ref{thm: para product}:
   \[
    \left\Vert T_fg\right\Vert_{\omega_0}\leq C_{\omega_0} \left\Vert f \right\Vert_{L^\infty} \left\Vert g\right\Vert_{\omega_0} \text{ and }   \left\Vert |D|^{r}R(f,g)\right\Vert_{\omega_0}\leq \frac{C_{\omega_0,r}}{\epsilon} \left\Vert f \right\Vert_{W^{r,\infty}} \left\Vert g\right\Vert_{\omega_0},
   \]
   and for a paradifferential operator with symbol $a \in \Gamma^0_0(\mathbb{R}^2)$ given in Definition \ref{def: para symbol}
  \[ \left\Vert T_af\right\Vert_{\omega_0}\leq C_{\omega_0} M_0^0(a;2) \left\Vert g\right\Vert_{\omega_0}.
  \]
\end{lemma}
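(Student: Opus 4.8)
\textbf{Proof plan for Lemma \ref{lem: comp mult}.}

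The plan is to work with the Littlewood--Paley characterisation $\left\vert \cdot \right\vert_{\omega_0}$ from Proposition \ref{prop: charac norm omeg}, which is equivalent to $\left\Vert \cdot \right\Vert_{\omega_0}$ up to the constant $C_{\omega_0}$, so it suffices to bound $\left\Vert \Delta_k(\cdot) \right\Vert_{L^2}$ by $dr_{\omega_0}(2^{-k})$ times the stated right-hand sides. I would start with the paraproduct estimate $\left\Vert T_f g \right\Vert_{\omega_0} \leq C_{\omega_0}\left\Vert f\right\Vert_{L^\infty}\left\Vert g\right\Vert_{\omega_0}$, since the multiplication estimate will follow from it together with the remainder bound via Bony's decomposition $fg = T_f g + T_g f + R(f,g)$. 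Recall $T_f g = \sum_j S_{j-N_0} f \, \Delta_j g$; the block $\Delta_k(T_f g)$ only sees the summands with $j$ in a bounded window around $k$ (spectral localisation, Theorem \ref{thm: para product} / Proposition \ref{prop: para act spctrm}), so $\left\Vert \Delta_k(T_f g)\right\Vert_{L^2} \lesssim \sum_{|j-k|\leq N_0}\left\Vert S_{j-N_0} f\right\Vert_{L^\infty}\left\Vert \Delta_j g\right\Vert_{L^2} \lesssim \left\Vert f\right\Vert_{L^\infty}\sum_{|j-k|\leq N_0}dr_{\omega_0}(2^{-j})\left\vert g\right\vert_{\omega_0}$. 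By the slow-variation hypothesis \eqref{eq:decay fourier hyp} (equivalently the algebraic decay from Proposition \ref{prop: eqv fr dec}), $dr_{\omega_0}(2^{-j}) \leq C_{\omega_0} dr_{\omega_0}(2^{-k})$ for $|j-k|\leq N_0$, which closes this estimate.

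For the remainder term $R(f,g) = \sum_j \Delta_j f \, \widetilde\Delta_j g$ (with $\widetilde\Delta_j$ a fattened block), the frequency of $\Delta_j f\,\widetilde\Delta_j g$ is $\lesssim 2^j$, so $\Delta_k(R(f,g))$ picks up all $j \gtrsim k - N_0$. Thus $\left\Vert |D|^r \Delta_k R(f,g)\right\Vert_{L^2} \lesssim 2^{kr}\sum_{j\geq k-N_0}\left\Vert \Delta_j f\right\Vert_{L^\infty}\left\Vert \widetilde\Delta_j g\right\Vert_{L^2} \lesssim 2^{kr}\left\Vert f\right\Vert_{W^{r,\infty}}\sum_{j\geq k-N_0} 2^{-jr}dr_{\omega_0}(2^{-j})\left\vert g\right\vert_{\omega_0}$; here the factor $2^{-jr}$ comes from absorbing one power $2^{jr}$ into $\left\Vert \Delta_j f\right\Vert_{L^\infty} \leq 2^{-jr}\left\Vert f\right\Vert_{W^{r,\infty}}$ (Bernstein). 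Using $dr_{\omega_0}(2^{-j}) \leq C_{\omega_0} 2^{-\alpha_{\omega_0}(j-k)} dr_{\omega_0}(2^{-k})$ the sum $\sum_{j\geq k-N_0} 2^{(k-j)r}2^{-\alpha_{\omega_0}(j-k)}$ converges geometrically to something $\lesssim 1$; the $1/\epsilon$ loss enters because $\alpha_{\omega_0}$ (hence the geometric ratio) is controlled below by a multiple of $\epsilon$ when $\omega_0 \in H^\epsilon$, via Proposition \ref{prop: eqv fr dec} ($C_{\omega_0}(\lambda) \leq C_\delta \left\Vert\omega_0\right\Vert_{H^{s_{\omega_0}}}\lambda^{s_{\omega_0}+\delta}$, and here $s_{\omega_0}\geq \epsilon$ forces the relevant exponent to stay bounded away from $0$ by $\sim\epsilon$), so $\sum_{i\geq 0} 2^{-\alpha_{\omega_0} i} \lesssim 1/\epsilon$. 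Then $\left\Vert fg\right\Vert_{\omega_0} \leq \left\Vert T_f g\right\Vert_{\omega_0} + \left\Vert T_g f\right\Vert_{\omega_0} + \left\Vert R(f,g)\right\Vert_{\omega_0}$, applying the paraproduct bound to the first two terms (with the roles of $f,g$ swapped in the second) and the $r=0$ case of the remainder bound to the third, gives the claimed multiplication inequality.

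Finally, for a paradifferential operator $T_a$ with $a \in \Gamma^0_0(\mathbb{R}^2)$, I would use the same spectral-localisation structure: by the definition of $T_a$ (Definition \ref{def: para op}) and Coifman--Meyer symbol reduction, $T_a$ acts like a superposition of paraproducts composed with order-zero Fourier multipliers, so $\Delta_k(T_a f)$ is controlled by $\left\Vert \Delta_j f\right\Vert_{L^2}$ for $j$ in a bounded window around $k$, with the operator-norm constant given by the symbolic seminorm $M^0_0(a;2)$ (Theorem \ref{thm: para continuity}). The same slow-variation comparison $dr_{\omega_0}(2^{-j})\leq C_{\omega_0}dr_{\omega_0}(2^{-k})$ then yields $\left\Vert T_a f\right\Vert_{\omega_0}\leq C_{\omega_0}M^0_0(a;2)\left\Vert f\right\Vert_{\omega_0}$. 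The main obstacle, and the only place any care is needed, is the bookkeeping in the remainder term: one must track how the geometric decay rate $\alpha_{\omega_0}$ degenerates as $\epsilon \to 0$ in order to extract exactly the factor $1/\epsilon$ (rather than something worse), and this is precisely where Proposition \ref{prop: eqv fr dec}'s quantitative form is used; the paraproduct and paradifferential bounds themselves are ``free'' in $\epsilon$ because there the relevant sum is over a bounded window of frequencies and no convergence of an infinite geometric series is invoked.
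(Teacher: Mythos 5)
Your proposal is correct and follows essentially the same route as the paper: Bony's decomposition, spectral localisation plus the slow-variation of $dr_{\omega_0}$ over a bounded frequency window for $T_fg$, a geometric tail summation (with the $1/\epsilon$ loss coming from the decay rate being bounded below by the $H^\epsilon$ regularity) for $R(f,g)$, and the Coifman--Meyer elementary-symbol reduction for $T_a$. The only cosmetic difference is that the paper packages the remainder's geometric summation as the ball-characterisation Lemma \ref{lem: space charac with balls} (imported from Lemmas 2.49 and 2.84 of \cite{bahouri2011fourier}), whereas you carry out the summation directly.
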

\begin{proof}
    We follow the classical approach in \cite{bahouri2011fourier,taylor2000tools}. For the product estimate we decompose $fg$ as
    \[
    fg=T_gf+T_fg+R(f,g),
    \]
    thus it suffices to prove the estimates on $T_f g$ and $R(f,g)$. For $T_f g$ we compute 
    \[
    \Delta_k(T_f g)=\sum^{k+N}_{\substack{i\geq 0 \\ i=k-N}}  \Delta_k\left(P_{\leq i-1}(D)f\Delta_i g\right),
    \]
    for a fixed $N$, thus by the slow varying property of $dr_{\omega_0}$ we get the desired bound on $|T_f g|_{\omega_0}$. Next for $R(f,g)$ we write for \[
    R(f,g)=\sum_{q}R_q \text{ with } R_{q}=\sum^{1}_{i=-1}\Delta_{q-i} f\Delta_{q}g
    \]
    and we note that $R_q$ is supported in a ball $2^{q}B(0,\lambda)$ for a fixed $\lambda$. We now need the following standard lemma which follows in verbatim from the proof of Lemmas 2.49 and 2.84 of \cite{bahouri2011fourier} and the slow varying property \eqref{eq:decay fourier hyp} of $dr_{\omega_0}$.
    \begin{lemma}\label{lem: space charac with balls}
        Consider $\omega_0\in H^\epsilon(\mathbb{R}^2)$ with $\epsilon>0$ verifying \eqref{eq:decay fourier hyp} then there exist a constant $C_{\omega_0}\geq 1$ such that 
\[
\frac{\epsilon}{C_{\omega_0}}\left\Vert f \right\Vert_{\omega_0}\leq \sup_{k\geq 0} \frac{1}{dr_{\omega_0(2^{-k})}}\left\Vert P_{\leq k}(D) f \right\Vert_{L^2}\leq \frac{C_{\omega_0}}{\epsilon}\left\Vert f \right\Vert_{\omega_0}.
\]
    where $P_{k}(D), \ P_{\leq k}(D)$ are Littlewood-Paley projectors are the Littlewood-Paley projectors given in Definition \ref{def: LP decomp}. 
    \end{lemma}    
   Applying the previous Lemma to $R_q$,
    \[
    \left\Vert  |D|^{r}R(f,g)\right\Vert_{L^2}\leq 2^{qr} \left\Vert \Delta_q g \right \Vert_{L^2}\sum^{1}_{i=-1} \left\Vert \Delta_{q-i}f \right\Vert_{L^\infty},
    \]
    giving again the stated bound on $|R(f,g)|_{\omega_0}$. For a $0$th order paradifferential operators following \cite{coifmandela,auscher1995paradifferential}, $T_a$ can be written as a sum of a rapidly decreasing sequence of ``elementary symbols", where an elementary symbol is of the form 
    \[
q(x,\xi)=\sum^{+\infty}_{k=0}T_{Q_k(x)}P_k(\xi),
    \]
    where $P_k$ are the Littlewood-Paley projector given in Definition \ref{def: LP decomp} and $Q_k(x)$ are uniformly bounded in $L^\infty$ which again gives the stated bound from the previous computations.
\end{proof}
Next we need a lemma on the action of composition on $\left\Vert \cdot \right\Vert_{\omega_0}$. We will give it through the operator of paracomposition given in Theorem \ref{thm: def para comp} as those estimates will be needed for the proof of Theorem \ref{thm: intro lyap}.
\begin{lemma}\label{lem:paracomp omeg norm}
 Consider $\omega_0\in L^2\left(\mathbb{R}^2\right)$ with $\omega_0$ verifying hypothesis \eqref{eq:decay fourier hyp}. Let $\chi:\mathbb{R}^d \rightarrow \mathbb{R}^d$ be a $C^1\left(\mathbb{R}^2\right)$ diffeomorphism with $D\chi \in W^{r,\infty}$, $r>0, r\notin \mathbb{N}$, take $u \in H^s(\mathbb{R}^2)$ and define   
\[\chi^* u=\sum_{k\geq 0}  \sum_{\substack{l\geq 0 \\ k-N \leq l \leq k+N}}P_l(D)(\Delta_k \omega_0)\circ \chi,
\] 
where $N \in \mathbb{N}^*$ is such that $2^{N}>\sup_{k,\mathbb{R}^d} \left\vert \Phi_k D\chi\right\vert^{-1}$ and $2^{N}>\sup_{k,\mathbb{R}^2} \left\vert \Phi_k D\chi\right\vert$. Then there exists an increasing function $C_{\omega_0}$ (increasing at most polynomially fast) such that   \[ \left\Vert \chi^* u\right\Vert_{\omega_0}\leq C_{\omega_0}(\left\Vert D\chi\right\Vert_{L^\infty},\left\Vert D\chi^{-1}\right\Vert_{L^\infty})\left\Vert u\right\Vert_{\omega_0}.\] 
 Now the composition $\omega_0 \circ \chi(x)$ can be decomposed as
 \[\omega_0 \circ \chi(x)=\chi^* \omega_0(x)+ T_{\nabla \omega_0\circ \chi}\cdot\chi(x)+ R(x),\] 
and the remainder verifies the estimate for an increasing function $C_{\omega_0}$ (increasing at most polynomially fast)  
\[  \left\Vert |D|^r R\right\Vert_{\omega_0} \leq C_{\omega_0,r}(\left\Vert D\chi\right\Vert_{W^{r,\infty}},\left\Vert D\chi^{-1}\right\Vert_{L^\infty})\left\Vert \omega_0\right\Vert_{\omega_0}. \]
\end{lemma}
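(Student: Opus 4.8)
The plan is to reduce each of the three assertions to an estimate on individual Littlewood--Paley blocks, using the characterisation $\|f\|_{\omega_0}\simeq |f|_{\omega_0}=\sup_{k\geq 0}dr_{\omega_0}(2^{-k})^{-1}\|\Delta_k f\|_{L^2}$ of Proposition \ref{prop: charac norm omeg}, and then to run the classical paracomposition estimates underlying Theorem \ref{thm: def para comp} (in the Besov-space form of \cite{bahouri2011fourier,taylor2000tools}) with the homogeneous weights $2^{sk}$ replaced throughout by $dr_{\omega_0}(2^{-k})^{-1}$. Conceptually this is an instance of interpolation with a function parameter (Chapter~1 of \cite{taylor2000tools}): the only structural fact about $dr_{\omega_0}$ that those proofs actually use is that it is slowly varying, and Proposition \ref{prop: eqv fr dec} supplies exactly this in quantitative form --- for a bounded shift $|k-j|\leq M$ one has $dr_{\omega_0}(2^{-k})\leq C_{\omega_0}(2^{-M})^{-1}dr_{\omega_0}(2^{-j})$, together with the geometric-type tail bound $\sum_{m\geq k}dr_{\omega_0}(2^{-m})\leq C_{\omega_0}\,dr_{\omega_0}(2^{-k})$ already exploited in the proof of Proposition \ref{prop: charac norm omeg}.

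First I would bound $\chi^*u$. Each summand $P_l(D)\big((\Delta_k u)\circ\chi\big)$ with $|l-k|\leq N$ is spectrally supported in an annulus of size $2^l\simeq 2^k$, so $\Delta_j(\chi^*u)$ only sees indices $k$ with $|k-j|\lesssim N$, whence $\|\Delta_j(\chi^*u)\|_{L^2}\lesssim\sum_{|k-j|\leq N+1}\|(\Delta_k u)\circ\chi\|_{L^2}$. A change of variables gives $\|(\Delta_k u)\circ\chi\|_{L^2}\leq\|\det D\chi^{-1}\|_{L^\infty}^{1/2}\|\Delta_k u\|_{L^2}\lesssim\|D\chi^{-1}\|_{L^\infty}\|\Delta_k u\|_{L^2}$, while $\|\Delta_k u\|_{L^2}\leq dr_{\omega_0}(2^{-k})\,|u|_{\omega_0}$. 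Summing the $O(N)$ relevant terms and using slow variation to replace $dr_{\omega_0}(2^{-k})$ by $C_{\omega_0}(2^{-N})^{-1}dr_{\omega_0}(2^{-j})$ yields $|\chi^*u|_{\omega_0}\lesssim N\,C_{\omega_0}(2^{-N})^{-1}\|D\chi^{-1}\|_{L^\infty}\,|u|_{\omega_0}$. Since $2^{N}$ is, by the choice of $N$ in the statement, comparable to $\|D\chi\|_{L^\infty}+\|D\chi^{-1}\|_{L^\infty}$, and since $C_{\omega_0}(2^{-N})^{-1}\leq\big(C_{\omega_0}(2^{-1})^{-1}\big)^{N}$ by the submultiplicativity $C_{\omega_0}(\lambda_1)C_{\omega_0}(\lambda_2)\leq C_{\omega_0}(\lambda_1\lambda_2)$ recorded in the proof of Lemma \ref{lem:algb dec}, this constant grows at most polynomially in $\|D\chi\|_{L^\infty}+\|D\chi^{-1}\|_{L^\infty}$; Proposition \ref{prop: charac norm omeg} then converts the block estimate into the asserted bound on $\|\chi^*u\|_{\omega_0}$.

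For the decomposition $\omega_0\circ\chi=\chi^*\omega_0+T_{\nabla\omega_0\circ\chi}\cdot\chi+R$, the identity itself and the fact that $R$ gains $r$ derivatives are exactly the content of Theorem \ref{thm: def para comp}; the remainder $R$ is assembled from Littlewood--Paley pieces --- the off-diagonal ``tails'' $P_l((\Delta_k\omega_0)\circ\chi)$ with $|l-k|>N$, together with the Fa\`a di Bruno correction terms sketched in the introduction --- and the proof of the $H^{s'}\!\to H^{s'+r}$ estimate there produces a Schur-type bound $\|\Delta_j R\|_{L^2}\lesssim C_r\big(\|D\chi\|_{W^{r,\infty}},\|D\chi^{-1}\|_{L^\infty}\big)\sum_{k\geq 0}\kappa(j,k)\|\Delta_k\omega_0\|_{L^2}$, where the kernel $\kappa(j,k)$ simultaneously encodes a gain of $r$ derivatives and rapid decay away from the diagonal $j\simeq k$ (the Hölder regularity $D\chi\in W^{r,\infty}$ being what produces both). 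The plan is then to multiply by $2^{rj}dr_{\omega_0}(2^{-j})^{-1}$, insert $\|\Delta_k\omega_0\|_{L^2}\leq dr_{\omega_0}(2^{-k})\|\omega_0\|_{\omega_0}$, and sum the resulting discrete expression using the slow-variation and tail bounds from Proposition \ref{prop: eqv fr dec}, so as to obtain $\||D|^{r}R\|_{\omega_0}\lesssim C_{\omega_0,r}\big(\|D\chi\|_{W^{r,\infty}},\|D\chi^{-1}\|_{L^\infty}\big)\|\omega_0\|_{\omega_0}$ with, once more, polynomial dependence on the distortion bounds by the same submultiplicativity argument; the term $T_{\nabla\omega_0\circ\chi}\cdot\chi$ is separately controlled by the paraproduct estimate of Lemma \ref{lem: comp mult}, though it is not needed for the statement. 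The main obstacle is precisely this last transfer: one must check that the net gain of $r$ derivatives in $R$ survives when the homogeneous bookkeeping $2^{(s'+r)j}$ of the $H^{s'+r}$ estimate is replaced by the non-power weights $dr_{\omega_0}(2^{-j})^{-1}$. This is exactly where the \emph{algebraic} lower bound on $C_{\omega_0}$ furnished by Proposition \ref{prop: eqv fr dec} is indispensable (merely knowing $C_{\omega_0}(\lambda)>0$ would not suffice): it keeps neighbouring-scale weights comparable up to controlled constants and makes the frequency tails summable, so that no regularity is lost in passing from the Sobolev scale to $\|\cdot\|_{\omega_0}$.
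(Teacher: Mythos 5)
Your proposal is correct and follows essentially the same route as the paper: reduce everything to Littlewood--Paley blocks via Proposition \ref{prop: charac norm omeg}, handle $\chi^*u$ by the near-diagonal spectral localisation exactly as for $T_fg$ in Lemma \ref{lem: comp mult}, and control $R$ by the off-diagonal kernel bounds of the classical paracomposition theory (Taylor's estimates (B.15)--(B.16), i.e.\ your Schur kernel $\kappa(j,k)$) combined with the slow-variation property \eqref{eq:decay fourier hyp} of $dr_{\omega_0}$. Your explicit bookkeeping of the polynomial dependence on $\|D\chi\|_{L^\infty}+\|D\chi^{-1}\|_{L^\infty}$ via submultiplicativity of $C_{\omega_0}$ is a detail the paper leaves implicit, but the argument is the same.
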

\begin{proof}
The estimate on $\chi^*u$ follows exactly from the one on $T_f g$ as 
\[
\Delta_j \left(\chi^* \omega_0\right)=\sum_{\substack{l\geq 0 \\ j-N \leq l \leq j+N}}P_l(D)(\Delta_k \omega_0)\circ \chi.
\]
Next for the estimates on $R$ we will follow the presentation in Appendix A and B of chapter 2 of \cite{taylor2000tools} and write 
\[
R=R_1+R_2+R_3,
\]
with 
\[
R_1=\sum_{k\geq 0}  \sum_{\substack{l\geq 0 \\ k-N \leq l \leq k+N}}P_l(D)\left((\Delta_k \omega_0)\circ \chi-(\Delta_k \omega_0)\circ \left(P_{\leq k}(D)\chi\right)\right),
\]
\[
R_2=\sum_{k\geq N}  P_{\leq k-N}(D)[(\Delta_k \omega_0)\circ \left(P_{\leq k}(D)\chi\right)],
\]
and 
\[
R_3=\sum_{k\geq N}  \left(Id-P_{\leq k+N}(D)\right)[(\Delta_k \omega_0)\circ \left(P_{\leq k}(D)\chi\right)].
\]
We note that $R_1$ can be treated similarly to $\chi^* \omega_0$. For $R_2$ and $R_3$ we recall estimates $(B.15)$ and $(B.16)$ page 133 of \cite{taylor2000tools}: there exists $N$ depending on $\left\Vert D\chi \right\Vert_{L^\infty}$ and K depending on $\left\Vert D\chi \right\Vert_{W^{r,\infty}}$ and $\left\Vert D\chi^{-1} \right\Vert_{L^\infty}$ such that for $k\geq K$, $j\geq k+N, \nu \geq r$
\[
\left\Vert P_j(D)\left((\Delta_k \omega_0)\circ \left(P_{\leq k}(D)\chi\right)\right) \right\Vert_{L^2}\leq C_\nu\left(\left\Vert D\chi \right\Vert_{W^{r,\infty}},\left\Vert D\chi^{-1} \right\Vert_{L^\infty}\right)2^{-j\nu} 2^{k(\nu-r)}\left\Vert (\Delta_k \omega_0) \right\Vert_{L^2},
\]
and 
\[
\left\Vert P_{\leq k-N}(D)\left((\Delta_k \omega_0)\circ \left(P_{\leq k}(D)\chi\right)\right) \right\Vert_{L^2}\leq C_\nu\left(\left\Vert D\chi \right\Vert_{W^{r,\infty}},\left\Vert D\chi^{-1} \right\Vert_{L^\infty}\right) 2^{-kr}\left\Vert (\Delta_k \omega_0) \right\Vert_{L^2}.
\]
which again give the claim.
\end{proof}

Now we turn to the estimate on $\left\Vert D\Phi_t-Id\right\Vert_{\omega_0}$. For this we
start from 
\[
\frac{d}{dt}\Phi_t=u\circ \Phi_t ,
\]
and use the paracomposition operator to write

\[
\frac{d}{dt}\Phi_t=\Phi_t^*u+T_{Du\circ \Phi_t}\Phi_t+R.
\]
Applying Lemmas \ref{lem: comp mult} and \ref{lem:paracomp omeg norm} we get 
\[
\frac{d}{dt}\left\Vert D\Phi_t-Id  \right\Vert_{\omega_0}\leq C_{\omega_0}\left\Vert Du\right\Vert_{L^\infty}\left\Vert D\Phi_t-Id \right\Vert_{\omega_0}+C_{\omega_0}\left(\left\Vert D\Phi_t\right\Vert_{L^\infty},\left\Vert D\Phi_t^{-1}\right\Vert_{L^\infty}\right)\left\Vert Du \right\Vert_{\omega_0},
\]
giving again the stated bound.
\section{Convergence of semi-classical measures}\label{sec: Wig tran}
While not stated exactly in this form, Shnirelman observation behind the ``microlocal scalar product" is on the effect of regularity on the convergence rate of semi-classical measures. First let us introduce those notions and the convergence result on $L^2\left(\mathbb{R}^2\right)$. 
\subsection{Convergence in $L^2(\mathbb{R}^2)$}
Consider a pseudodifferential operator $a\in S^0\left(\mathbb{R}^2\right)$ and $u,v\in L^2\left(\mathbb{R}^2\right)$, then by the continuity of pseudodifferential operators on $L^2$ (see Theorem \ref{thm: pseudo continuity}) we see that 
\[
\left(a(x,\epsilon D)u,v\right)_{L^2(\mathbb{R}^2)}\underset{\epsilon \to 0}{\to }\int_{\mathbb{R}^2}a(x,0)u(x)\bar{v}(x)dx=\lambda_{u,v}(a),
\]
and $\lambda_{u,v}$ can be seen as measure on the space $S^0(\mathbb{R}^2)$. This measure was first introduced simultaneously by two independent procedures in \cite{gerard1991mesures} and \cite{lions1993mesures}. The above construction method is the procedure introduced in \cite{gerard1991mesures}. Note that $\lambda_{u,u}$ is always a positive measure, thus if $a(x,\xi)\geq 0$ and $\lambda_{u,u}(a)>0$ then for $\epsilon$ sufficiently small 
\[
\left(a(x,\epsilon D)u,u\right)_{L^2}=\int_{\mathbb{R}^2\times\mathbb{R}^2}a(x,\epsilon\xi)e^{ix\cdot \xi}\mathscr{F}(u)(\xi)u(x)dxd\xi>0.
\]
Note that the only property on $a$ we used in the proof of the previous theorem is the continuity of $a(x,D)=\mbox{Op}(a)$ on $L^2(\mathbb{R}^2)$ which holds more generally, see Theorem \ref{thm: para continuity}, for paradifferential operators with symbol $a\in \Gamma_0^0(\mathbb{R}^2)$ with regularised symbol $\sigma_a$ (see Definitions \ref{def: para symbol} and \ref{def: para op}). The cut-off used in the definition of paradifferential operators ensures that $\sigma_a(x,\xi)=0$ for $\xi \in B(0,b)$ for some $b>0$ fixed thus we have proved the following.
\begin{theorem}[From \cite{gerard1991mesures} and \cite{lions1993mesures}]\label{thm:l2 conv of meas psd}
    For $u,v\in L^2\left(\mathbb{R}^2\right)$ and $a\in S^0\left(\mathbb{R}^2\right)$ and $a'\in  \Gamma^0_0(\mathbb{R}^2) $ then 
    \[
    \left(a(x,\epsilon D)u,v\right)_{L^2(\mathbb{R}^2)}, \left(\sigma_{a'}(x,\epsilon D)u,v\right)_{L^2(\mathbb{R}^2)},
    \]
    converge respectively  to $\lambda_{u,v}(a)$ and $0$ in the limit $\epsilon \to 0$.
\end{theorem}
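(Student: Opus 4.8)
The plan is to prove the two convergence statements essentially by reduction to continuity of pseudodifferential/paradifferential operators on $L^2$, together with a density argument. First I would treat the case of smooth compactly supported data. If $u,v\in \mathscr{S}(\mathbb{R}^2)$, then $(a(x,\epsilon D)u,v)_{L^2}$ can be written out via the oscillatory integral $\int a(x,\epsilon\xi)\,e^{ix\cdot\xi}\mathscr{F}(u)(\xi)\,\overline{v(x)}\,dx\,d\xi$, and since $a\in S^0$ is bounded and continuous with $a(x,\epsilon\xi)\to a(x,0)$ pointwise as $\epsilon\to 0$, dominated convergence (the integrand is dominated by $\|a\|_{L^\infty}|\mathscr{F}(u)(\xi)|\,|v(x)|$, which is integrable for Schwartz data) yields convergence to $\int a(x,0)u(x)\overline{v(x)}\,dx=\lambda_{u,v}(a)$. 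This is the only place where an actual computation is needed, and it is routine.

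Next I would remove the smoothness assumption by a density/equicontinuity argument. By Theorem \ref{thm: pseudo continuity}, the operators $a(x,\epsilon D)$ are bounded on $L^2$ uniformly in $\epsilon\in(0,1]$, with operator norm controlled by a fixed symbolic seminorm $M$ of $a$; the limiting operator $u\mapsto a(x,0)u$ is multiplication by a bounded function, hence also bounded on $L^2$ with norm $\le \|a\|_{L^\infty}$. Given general $u,v\in L^2$ and $\eta>0$, pick $u_\eta,v_\eta\in\mathscr{S}$ with $\|u-u_\eta\|_{L^2},\|v-v_\eta\|_{L^2}<\eta$. Then
\[
\bigl|(a(x,\epsilon D)u,v)_{L^2}-(a(x,\epsilon D)u_\eta,v_\eta)_{L^2}\bigr|\le C(\|u\|_{L^2}+\|v\|_{L^2}+\eta)\,\eta,
\]
uniformly in $\epsilon$, and the same bound holds for the limiting bilinear form $\lambda_{\cdot,\cdot}(a)$. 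Combining this with the Schwartz-class convergence proved in the first step and letting $\eta\to 0$ gives the first claimed convergence for all $u,v\in L^2$.

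For the paradifferential statement, the key structural point—already isolated in the paragraph preceding the theorem—is that the regularised symbol $\sigma_{a'}$ of a paradifferential operator with $a'\in\Gamma^0_0(\mathbb{R}^2)$ is supported in $\{|\xi|\ge b\}$ for some fixed $b>0$, because of the low-frequency cut-off in the admissible cut-off function of Definition \ref{def: para op}. Consequently the rescaled symbol $\sigma_{a'}(x,\epsilon\xi)$ is supported in $\{|\xi|\ge b/\epsilon\}$, so it vanishes pointwise on any fixed frequency annulus once $\epsilon$ is small, and its ``value at $\xi=0$'' is identically zero; running the same dominated-convergence argument as above (again $\sigma_{a'}$ lies in $S^0$ with uniform seminorms by Definition \ref{def: para symbol}, so Theorem \ref{thm: para continuity} gives uniform $L^2$-boundedness) shows $(\sigma_{a'}(x,\epsilon D)u,v)_{L^2}\to 0$ for Schwartz $u,v$, and the density argument extends this to $L^2$. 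The main—and only real—obstacle is bookkeeping: making sure the uniform-in-$\epsilon$ $L^2$ bounds in Theorems \ref{thm: pseudo continuity} and \ref{thm: para continuity} are stated in terms of symbolic seminorms that are invariant (or at least non-increasing) under the rescaling $a(x,\xi)\mapsto a(x,\epsilon\xi)$, $\epsilon\le 1$; for $S^0_{1,0}$ and $\Gamma^0_0$ symbols this is immediate since differentiating in $\xi$ brings down factors of $\epsilon\le 1$, so no delicate estimate is actually required.
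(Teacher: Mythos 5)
Your proposal is correct and follows essentially the same route as the paper, which proves the statement by combining uniform $L^2$-boundedness of $a(x,\epsilon D)$ (Theorem \ref{thm: pseudo continuity}) with the explicit limit on nice data, and for the paradifferential case by observing that the admissible cut-off forces $\sigma_{a'}(x,\xi)=0$ for $|\xi|<b$, so the rescaled symbol vanishes on every fixed frequency region. Your write-up merely makes explicit the density/equicontinuity step and the invariance of the $S^0$ seminorms under $\xi\mapsto\epsilon\xi$ that the paper leaves implicit.
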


\subsection{The rate of convergence}
A first observation is that the rates of convergence of sequences given in Theorems \ref{thm:l2 conv of meas psd} depend on the regularity of $u,v$ more precisely by Remark $III.9$ of \cite{lions1993mesures} if $u\in H^s(\mathbb{R}^2),s>0$ then for $a\in S^0\left(\mathbb{R}^2\right)$, $\left(a(x,\epsilon D)u,u\right)_{L^2(\mathbb{R}^2\times \mathbb{R}^2)}-\lambda_{u,u}(a)=O(\epsilon^{2s})$. Shnirelman then showed in 
\cite{ShnLyap} that the same rate holds for $u\in B^s_{2,\infty}$ and he insight-fully observed that if $u\notin B^{s+\delta}_{2,\infty}(\mathbb{R}^2)$ for $\delta>0$ then the previous rate of convergence is essentially exact. While he did not state the second observation explicitly he uses it to prove Theorem 7.1 of \cite{ShnLyap}. We will extend the decay rate observations of \cite{lions1993mesures} and \cite{ShnLyap} to $L^2$ functions in general. We recall the definition for $u\in L^2\left(\mathbb{R}^2\right)$ 
\[
dr_u(\epsilon)=\left(\int_{\mathbb{R}^2\setminus B\left(0,\frac{1}{\epsilon}\right)}\left|\mathscr{F}(u)(\xi)\right|^2 d\xi\right)^{\frac{1}{2}},
\]
which is an increasing function of $\epsilon$ with $\displaystyle \lim_{\epsilon \to 0}  dr_u(\epsilon)=0$ and $\displaystyle \lim_{\epsilon \to +\infty}  dr_u(\epsilon)=\left\Vert u\right\Vert_{L^2(\mathbb{R}^2)}$.
\begin{theorem}\label{thm: rt of conv W meas para}
    Consider $u,v\in L^2(\mathbb{R}^2)$, then there exists universal constants $C_1,C_2, C_3>0$ such that for $a\in \Gamma_0^0(\mathbb{R}^2)$ and $\epsilon>0$
    \[
    \left\vert \left(\sigma_a(x,\epsilon D)u,v\right)_{L^2(\mathbb{R}^2)}\right\vert\leq C_1 M^0_0(a;2) dr_u(C_2\epsilon)dr_v(C_3\epsilon),
    \]
    where the semi-norms $M^{\cdot}_{\cdot}(\cdot;\cdot)$ are defined as in Definition \ref{def: para symbol}. 
    If there exists a $0$-homogeneous symbol $a_0\in \Gamma^0_0(\mathbb{R}^2)$ such that $a-a_0\in \Gamma^{-\alpha}_0(\mathbb{R}^2)$ for some $\alpha> 0$ then $a_0$ is called the principal symbol of $a$ and moreover we have
\[
    \left\vert \left(\sigma_{a-a_0}(x,\epsilon D)u,v\right)_{L^2(\mathbb{R}^2)}\right\vert \leq C_1 \epsilon^\alpha M^{-\alpha}_0(a-a_0;2) dr_u(C_2\epsilon)dr_v(C_3\epsilon).
\]
\end{theorem}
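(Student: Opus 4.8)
The plan is to pass to the Fourier side and use only the two sided frequency localization built into the paradifferential quantization, bringing in the $L^2$ continuity of paradifferential operators only at the very end. Writing $p(x,\xi):=\sigma_a(x,\epsilon\xi)$, so that $\sigma_a(x,\epsilon D)=\mathrm{Op}(p)$, and letting $\widehat p(\eta,\xi)$ be the Fourier transform of $p$ in $x$, one has
\[
\big(\sigma_a(x,\epsilon D)u,v\big)_{L^2}=c_d\iint \widehat p(\zeta-\xi,\xi)\,\widehat u(\xi)\,\overline{\widehat v(\zeta)}\,d\xi\,d\zeta .
\]
A direct Cauchy--Schwarz here is not available, since it would require $\widehat p(\cdot,\xi)\in L^1_\eta$ uniformly in $\xi$, which is false for a general $a\in\Gamma^0_0$ (no regularity in $x$ is assumed). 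Instead I would only record the support of $\widehat p$: by the definition of the paradifferential quantization (Definition \ref{def: para op}, Proposition \ref{prop: para act spctrm}), $\widehat{\sigma_a}(\eta,\zeta)$ is supported in $\{|\eta|\le\epsilon_0\langle\zeta\rangle\}$ for a fixed small $\epsilon_0$, and $\sigma_a(x,\zeta)=0$ for $|\zeta|\le b_0$. Rescaling $\zeta\mapsto\epsilon\xi$, the integrand above is supported in $\{|\xi|>b_0/\epsilon\}\cap\{|\zeta-\xi|\le\epsilon_0\langle\epsilon\xi\rangle\}$; and since $\langle\epsilon\xi\rangle\le C_0\epsilon|\xi|$ whenever $|\xi|>b_0/\epsilon$ (with $C_0=C_0(b_0)$), the second condition reads $|\zeta-\xi|\le\epsilon_0C_0\epsilon|\xi|$, so that for $\epsilon$ below a fixed threshold $\epsilon_*$ we get $\tfrac12|\xi|\le|\zeta|\le\tfrac32|\xi|$, hence also $|\zeta|>b_0/(2\epsilon)$, throughout the support.

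Reading this off as an operator identity, $\sigma_a(x,\epsilon D)=\mathbbm 1_{>b_0/2\epsilon}(D)\circ\sigma_a(x,\epsilon D)\circ\mathbbm 1_{>b_0/\epsilon}(D)$, where $\mathbbm 1_{>R}(D)$ is the sharp radial Fourier cut-off onto $\{|\xi|>R\}$. Since $\mathbbm 1_{>R}(D)$ is self-adjoint and $\|\mathbbm 1_{>R}(D)f\|_{L^2}=dr_f(1/R)$, this yields
\[
\big|(\sigma_a(x,\epsilon D)u,v)\big|=\big|\big(\sigma_a(x,\epsilon D)\,\mathbbm 1_{>b_0/\epsilon}(D)u,\ \mathbbm 1_{>b_0/2\epsilon}(D)v\big)\big|\le\|\sigma_a(x,\epsilon D)\|_{L^2\to L^2}\;dr_u\!\big(\tfrac{\epsilon}{b_0}\big)\,dr_v\!\big(\tfrac{2\epsilon}{b_0}\big),
\]
which is the first inequality with $C_2=1/b_0$, $C_3=2/b_0$, provided one has the uniform bound $\|\sigma_a(x,\epsilon D)\|_{L^2\to L^2}\le C\,M^0_0(a;2)$ (this is the source of $C_1$). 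For the principal-symbol refinement I would write $\sigma_{a-a_0}(x,\epsilon D)=\mathrm{Op}\big(\langle\epsilon\xi\rangle^{\alpha}\sigma_{a-a_0}(x,\epsilon\xi)\big)\circ\langle\epsilon D\rangle^{-\alpha}$; the first factor is of order $0$ with seminorm $\lesssim M^{-\alpha}_0(a-a_0;2)$ and is handled by the estimate just proved, while the multiplier $\langle\epsilon D\rangle^{-\alpha}$, acting on the frequencies $|\xi|\gtrsim 1/\epsilon$ that are the only ones left after the localization above, contributes the power $\epsilon^{\alpha}$, giving the second inequality.

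The genuinely delicate point, I expect, is the uniform-in-$\epsilon$ bound $\|\sigma_a(x,\epsilon D)\|_{L^2\to L^2}\le C\,M^0_0(a;2)$, which cannot be obtained from the Fourier integral directly and must go through the $L^2$ continuity of paradifferential operators (Theorem \ref{thm: para continuity}). The cleanest way is to conjugate $\sigma_a(x,\epsilon D)$ by the unitary $L^2$-dilation $f\mapsto\epsilon^{d/2}f(\epsilon\,\cdot)$, turning it into $\mathrm{Op}\big(\sigma_a(\epsilon\,\cdot,\,\cdot)\big)$; for $\epsilon\le1$ this is still a paradifferential operator of order $0$ — the $x$-Fourier support of $\sigma_a(\epsilon x,\xi)$ stays inside $\{|\eta|\le\epsilon_0\langle\xi\rangle\}$, and since a spatial dilation leaves $L^\infty_x$ norms unchanged, $M^0_0(\sigma_a(\epsilon\,\cdot,\,\cdot);2)=M^0_0(\sigma_a;2)\lesssim M^0_0(a;2)$ — so Theorem \ref{thm: para continuity} gives the bound with a constant independent of $\epsilon$. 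For $\epsilon$ bounded below there is nothing new, as $dr_u(C_2\epsilon)$ and $dr_v(C_3\epsilon)$ are then comparable to $\|u\|_{L^2}$, $\|v\|_{L^2}$ and the claim reduces to plain $L^2$ continuity. Everything else — the support bookkeeping and the passage to $dr_u,dr_v$ through the definition of $dr$ — is routine.
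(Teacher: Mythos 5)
Your proof of the first inequality is correct and follows essentially the same route as the paper's: both arguments observe that the paradifferential cutoff forces the input frequencies of $\sigma_a(x,\epsilon D)$ into $\{\left\vert\xi\right\vert\geq b_0/\epsilon\}$ and, for $\epsilon$ below a fixed threshold, its output frequencies into a comparable region, then conclude by Cauchy--Schwarz and the $L^2$ boundedness of the operator. Your dilation argument making the operator-norm bound explicitly uniform in $\epsilon$ is a worthwhile clarification of a point the paper leaves implicit (it simply cites Theorem \ref{thm: para continuity}), and you bypass the paper's polarization step by localizing on both sides at once; the large-$\epsilon$ regime is a cosmetic caveat that both you and the paper treat loosely.

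The second inequality is where there is a genuine gap. The multiplier $\langle\epsilon D\rangle^{-\alpha}$ does \emph{not} contribute a factor $\epsilon^{\alpha}$ on the frequencies $\left\vert\xi\right\vert\gtrsim 1/\epsilon$ that survive the localization: there $\epsilon\left\vert\xi\right\vert\gtrsim 1$, so $\langle\epsilon\xi\rangle^{-\alpha}$ is merely $O(1)$ (at $\left\vert\xi\right\vert=1/\epsilon$ it equals $2^{-\alpha/2}$ for every $\epsilon$); it would be $O(\epsilon^{\alpha})$ only on $\left\vert\xi\right\vert\gtrsim 1/\epsilon^{2}$. As written, your factorization therefore yields the second bound \emph{without} the crucial $\epsilon^{\alpha}$. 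The gain must come from the \emph{unrescaled} smoothing, which is how the paper argues: an operator with symbol in $\Gamma^{-\alpha}_0$ maps $L^2$ to $H^{\alpha}$ with norm controlled by $M^{-\alpha}_0(a-a_0;2)$, and since the output is spectrally supported in $\{\left\vert\xi\right\vert\geq C/\epsilon\}$, returning from $H^{\alpha}$ to $L^2$ on that region costs $(C/\epsilon)^{-\alpha}\lesssim\epsilon^{\alpha}$. In your language, the factor to peel off is $\langle D\rangle^{-\alpha}$, not $\langle\epsilon D\rangle^{-\alpha}$. Be aware that this still leaves something to verify: the companion factor $\mathrm{Op}\big(\langle\xi\rangle^{\alpha}\sigma_{a-a_0}(x,\epsilon\xi)\big)$ is not uniformly of order $0$ with seminorm $M^{-\alpha}_0(a-a_0;2)$ if one estimates it naively (the rescaled symbol $\sigma_{a-a_0}(\cdot,\epsilon\,\cdot)$ has $M^{-\alpha}_0$ seminorm of size $\epsilon^{-\alpha}$), so the bookkeeping of which seminorm controls which factor is exactly where the content of the second statement lies and must be carried out carefully.
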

\begin{remark}
    The rates given in the previous theorem are optimal in the sense that by definition there exists a $C$ such that 
    \[
    \left\vert \left(\sigma_1(x,\epsilon D)u,u\right)_{L^2(\mathbb{R}^2)}\right\vert \gtrsim C_1 dr_{u}(C_2\epsilon)dr_{u}(C_3\epsilon) .
    \]
    Thus the previous theorem shows that the exact rate of convergence of semi-classical is an exact bilinear test of smoothness.
\end{remark}
\begin{proof}
    First noting that by the polarization formula for a sesquilinear form $s$
    \[
    s(u,v)=\frac{1}{4}\left(s(u+v,u+v)-s(u-v,u-v)+is(u+iv,u+iv)-is(u-iv,u-iv)\right),
    \]
    it suffices to work with $u=v$. First we study $\left(\sigma_a(x,\epsilon D)u,u\right)_{L^2(\mathbb{R}^2)}$ and write by Parseval's identity
    \begin{align*}
        \left\vert \left(\sigma_a(x,\epsilon D)u,u\right)_{L^2(\mathbb{R}^2)} \right\vert &=\left\vert \int_{\mathbb{R}^2}\mathscr{F}\left(\sigma_a(x,\epsilon D)u\right)(\xi)\mathscr{F}(u)(\xi)d\xi \right\vert
  \intertext{using the Cauchy-Schwarz inequality combined with the spectral localisation property of paradifferential operators given in Proposition \ref{prop: para act spctrm} we get that there exists a universal constant $C_1, C_2>0$ such that}   
   \left\vert \left(\sigma_a(x,\epsilon D)u,u\right)_{L^2(\mathbb{R}^2)} \right\vert &\leq C_1 \left\Vert \mathscr{F}\left(\sigma_a(x,\epsilon D)u\right)\right\Vert_{L^2\left(\mathbb{R}^2\setminus B\left(0,\frac{C_2}{\epsilon}\right)\right)}\left\Vert \mathscr{F}(u)\right\Vert_{L^2\left(\mathbb{R}^2\setminus B\left(0,\frac{C_2}{\epsilon}\right)\right)}
   \intertext{by the boundness of paradifferential operators on $L^2$, see Theorem \ref{thm: para continuity} we get, and again the spectral localisation property of paradifferential operators}
    \left\vert \left(\sigma_a(x,\epsilon D)u,u\right)_{L^2(\mathbb{R}^2)} \right\vert &\leq C_1 M^0_0(a;2)  dr_u(C_2\epsilon) dr_u(C_3\epsilon).
    \end{align*}
    To get the estimate on $ \left\vert \left(\sigma_{a-a_0}(x,\epsilon D)u,u\right)_{L^2(\mathbb{R}^2)}\right\vert$ we start from 
    \[
    \left\vert \left(\sigma_{a-a_0}(x,\epsilon D)u,u\right)_{L^2(\mathbb{R}^2)} \right\vert \leq  \left\Vert \mathscr{F}\left(\sigma_{a-a_0}(x,\epsilon D)u\right)\right\Vert_{L^2\left(\mathbb{R}^2\setminus B\left(0,\frac{C}{\epsilon}\right)\right)}\left\Vert \mathscr{F}(u)\right\Vert_{L^2\left(\mathbb{R}^2\setminus B\left(0,\frac{C}{\epsilon}\right)\right)},
    \]
  by the boundness of paradifferential operators in $\Gamma^{-\alpha}_0(\mathbb{R}^2)$ from $L^2(\mathbb{R}^2)$ to $H^\alpha(\mathbb{R}^2)$, see Theorem \ref{thm: para continuity} and again the spectral localisation property of paradifferential operators we get
  \[ \left\vert \left(\sigma_{a-a_0}(x,\epsilon D)u,u\right)_{L^2(\mathbb{R}^2)} \right\vert \leq C_1 M^{-\alpha}_0(a-a_0;2) \epsilon^\alpha dr_u(C_2\epsilon) dr_u(C_3\epsilon).\]
\end{proof}
The previous proof adapts in verbatim to give the following pseudodifferential version of the previous theorem. 
\begin{theorem}\label{thm: rt of conv W meas pseudo} Consider $u,v\in L^2(\mathbb{R}^2)$, $0<\delta<\epsilon$ and a function $\chi\in C^\infty_0\left(\mathbb{R}^2\setminus B(0,1)\right)$. Then there exists a universal constant $C$ and constant $C_\chi$, depending only on $\chi$, such that for $a\in S^0\left(\mathbb{R}^2\right)$ and $\epsilon>0$
    \[
    \left\vert \left(\chi( \epsilon D)a(x,  D)\chi( \epsilon D)u,v\right)_{L^2(\mathbb{R}^2)}\right\vert\leq C M^0_0(a;2) dr_u(C_\chi\epsilon)dr_v(C_\chi\epsilon),
    \]
    where the semi-norms $M^{\cdot}_{\cdot}(\cdot;\cdot)$ are defined as in Definition \ref{def: pseudo symb}. If there exists a zero homogeneous symbol $a_0$ such that $a_0\chi\in S^0\left(\mathbb{R}^2\right)$ and $(a-a_0)\chi\in S^{-\alpha}(\mathbb{R}^2)$ for some $\alpha> 0$ then $a_0$ is called the principal symbol of $a$ and moreover we have
\[
    \left\vert \left(\chi(\epsilon D)(a-a_0)(x,  D)\chi(\epsilon D)u,v\right)_{L^2(\mathbb{R}^2)}\right\vert \leq C \epsilon^\alpha M^{-\alpha}_0(a-a_0;2) dr_u(C_\chi\epsilon)dr_v(C_\chi\epsilon).
\]
\end{theorem}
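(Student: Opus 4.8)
The plan is to transcribe the proof of Theorem~\ref{thm: rt of conv W meas para}, with the two copies of $\chi(\epsilon D)$ taking over the role that the built‑in frequency truncation of the regularised symbol $\sigma_a$ plays in the paradifferential argument. Since $\chi\in C^\infty_0(\mathbb{R}^2\setminus B(0,1))$, the multiplier $\chi(\epsilon\xi)$ is supported in $\{|\xi|\ge 1/\epsilon\}$; hence, for every $w\in L^2(\mathbb{R}^2)$, the Fourier transform of $\chi(\epsilon D)w$ is supported in $\{|\xi|\ge 1/\epsilon\}$ and $\|\chi(\epsilon D)w\|_{L^2}\le \|\chi\|_{L^\infty}\,dr_w(\epsilon)$. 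The parameter $\delta$ in the statement plays no role, and the constant $C_\chi$ will absorb $\|\chi\|_{L^\infty}$ and the dimensional Calderón--Vaillancourt constant.

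For the first inequality I would write, by Parseval,
\[
\big(\chi(\epsilon D)\,a(x,D)\,\chi(\epsilon D)u,\,v\big)_{L^2}=\int_{\mathbb{R}^2}\mathscr{F}\!\big(\chi(\epsilon D)a(x,D)\chi(\epsilon D)u\big)(\xi)\,\overline{\mathscr{F}(v)(\xi)}\,d\xi .
\]
The first factor is spectrally supported in $\{|\xi|\ge 1/\epsilon\}$, so Cauchy--Schwarz bounds the modulus of the left‑hand side by $\|\chi(\epsilon D)a(x,D)\chi(\epsilon D)u\|_{L^2}\,dr_v(\epsilon)$. Using $\|\chi(\epsilon D)\|_{L^2\to L^2}\le\|\chi\|_{L^\infty}$, the $L^2$‑continuity of $a(x,D)$ from Theorem~\ref{thm: pseudo continuity} (which supplies the factor $M^0_0(a;2)$), and the bound on $\|\chi(\epsilon D)u\|_{L^2}$ above, one obtains the first estimate. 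Unlike in the paradifferential proof no polarisation is needed, since the outer $\chi(\epsilon D)$ already localises the output.

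For the principal‑symbol estimate the issue is to turn the order $-\alpha$ decay of $a-a_0$ into a genuine factor $\epsilon^\alpha$. Choose an auxiliary $\tilde\chi\in C^\infty(\mathbb{R}^2)$ with $\tilde\chi\equiv 1$ on $\operatorname{supp}\chi$ and $\operatorname{supp}\tilde\chi\subset\{|\xi|\ge 1/2\}$; then $\chi(\epsilon D)=\tilde\chi(\epsilon D)\chi(\epsilon D)$ and, because $\tilde\chi(\epsilon D)$ is a Fourier multiplier, the composition on the right is exact:
\[
(a-a_0)(x,D)\,\tilde\chi(\epsilon D)=\big((a-a_0)(x,\xi)\,\tilde\chi(\epsilon\xi)\big)(x,D).
\]
The symbol $(a-a_0)(x,\xi)\tilde\chi(\epsilon\xi)$ is supported in $\{|\xi|\ge 1/(2\epsilon)\}$, where the hypothesis $(a-a_0)\chi\in S^{-\alpha}$ — together with $a\in S^0$ and the $0$‑homogeneity of $a_0$ — supplies symbol bounds of order $-\alpha$; since $\tilde\chi(\epsilon\cdot)$ lies in $S^0$ with seminorms bounded uniformly in $\epsilon\in(0,1]$, a Leibniz expansion gives $(a-a_0)\tilde\chi(\epsilon\cdot)\in S^{-\alpha}$ with $M^{-\alpha}_0\big((a-a_0)\tilde\chi(\epsilon\cdot);2\big)\lesssim M^{-\alpha}_0(a-a_0;2)$ uniformly in $\epsilon$. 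Hence $(a-a_0)(x,D)\tilde\chi(\epsilon D)$ maps $L^2$ to $H^\alpha$ with norm $\lesssim M^{-\alpha}_0(a-a_0;2)$, uniformly in $\epsilon$. Feeding this into the Parseval/Cauchy--Schwarz scheme above — now also using that a function spectrally supported in $\{|\xi|\ge 1/\epsilon\}$ has $L^2$ norm at most $\epsilon^\alpha$ times its $H^\alpha$ norm, and writing $(a-a_0)(x,D)\chi(\epsilon D)=\big((a-a_0)\tilde\chi(\epsilon\cdot)\big)(x,D)\,\chi(\epsilon D)$ — yields the second inequality. For $\epsilon$ bounded away from $0$ the bound follows trivially from the first inequality, so restricting to $\epsilon\in(0,1]$ is harmless.

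The step I expect to be the real obstacle is this last one: one must see that localising the inner cut‑off to frequencies $\gtrsim 1/\epsilon$ upgrades the $\langle\xi\rangle^{-\alpha}$ decay of $a-a_0$ into $\epsilon^\alpha$‑smallness of the $L^2\!\to\!H^\alpha$ operator norm — essentially a scaling argument, made clean by the exact composition with the Fourier multiplier $\tilde\chi(\epsilon D)$ — and one must be slightly careful that the hypothesis on $a-a_0$ is applied exactly on the high‑frequency region $|\xi|\gtrsim 1/\epsilon$ that actually carries the pairing. Everything else is a verbatim transcription of the paradifferential argument.
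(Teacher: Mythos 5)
Your proposal is correct and follows essentially the same route as the paper, which proves this theorem simply by declaring that the proof of Theorem \ref{thm: rt of conv W meas para} (Parseval, Cauchy--Schwarz, spectral localisation from the frequency cut-offs, $L^2\to L^2$ respectively $L^2\to H^\alpha$ boundedness, and trading $\epsilon^\alpha$ against the high-frequency support) adapts verbatim. Your auxiliary cut-off $\tilde\chi$ is a sensible way to make precise how the order $-\alpha$ hypothesis on $a-a_0$ (which the paper states somewhat loosely via multiplication by the compactly supported $\chi$) is actually applied on the region $\left\vert\xi\right\vert\gtrsim 1/\epsilon$ carrying the pairing, but it does not change the substance of the argument.
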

\begin{remark}
 Theorem $3.1$ in \cite{ShnLyap} corresponds to the case in the previous theorem where $dr_\epsilon(u),dr_\epsilon(v)$ are of order $\epsilon^s$ for some $s\in \mathbb{R}$.
\end{remark}

\section{The monotone quantity in 2d Euler}\label{sec: lyp}
With Theorems \ref{thm: rt of conv W meas pseudo} and \ref{thm: rt of conv W meas para} as well as Theorem \ref{thm: specific smooth prop} we transform the qualitative computation in the proof of Theorem $5.1$ of \cite{ShnLyap} into the quantitative results given in Theorems \ref{thm: intro lyap}.
\subsection{Proof of Theorem \ref{thm: intro lyap}}
We will prove a more general form of Theorem \ref{thm: intro lyap} where $\chi(\epsilon D)$ is replaced by a paradifferential operator $\sigma_a(x,\epsilon D)$ with $a\in \Gamma_1^0$.  We start by computing the time derivative of $T_{ D\Phi_t}\left(\Phi_t-Id\right)$ where the paraproduct here should be understood component by component. We compute
\[
\partial_t\left(T_{\left[D\Phi_t\right]^{-1}} \Phi_t\right)=T_{\partial_t\left[D\Phi_t\right]^{-1}}\Phi_t+T_{\left[D\Phi_t\right]^{-1}} u\circ \Phi_t.
\]
For a matrix valued function $A_t$ we have $\partial_t \left(A_t^{-1}\right) =-A_t^{-1}    \partial_t A_t   A_t^{-1}$, thus
\begin{align*}
\partial_t\left[D\Phi_t\right]^{-1}&=-\left[D\Phi_t\right]^{-1} \left(\partial_t D\Phi_t\right) \left[D\Phi_t\right]^{-1}\\
&=-\left[D\Phi_t\right]^{-1} \left( D\left(u\circ \Phi_t\right)\right) \left[D\Phi_t\right]^{-1}\\
&=-\left[D\Phi_t\right]^{-1} Du\circ \Phi_t.
\end{align*}
Which gives 
\[
\partial_t\left(T_{\left[D\Phi_t\right]^{-1}} \Phi_t\right)=-T_{\left[D\Phi_t\right]^{-1} Du\circ \Phi_t}\Phi_t+T_{\left[D\Phi_t\right]^{-1}} u\circ \Phi_t.
\]
Using the paracomposition operator defined in \ref{thm: def para comp} we define $R_1$ by 
\begin{equation}\label{eq: def R_1}
u\circ \Phi_t=\Phi_t^*u+T_{Du\circ \Phi_t} \Phi_t+R_1=\Phi_t^*u+T_{Du\circ \Phi_t} \Phi_t+R_1.
\end{equation}
Note that the paracomposition operator should also be understood component by component. Thus we get the key cancellation in the second term on the left hand side
\[
\partial_t\left(T_{\left[D\Phi_t\right]^{-1}} \Phi_t\right)=T_{\left[D\Phi_t\right]^{-1}}\Phi_t^*u+\left(T_{\left[D\Phi_t\right]^{-1}}T_{Du\circ \Phi_t}-T_{\left[D\Phi_t\right]^{-1} Du\circ \Phi_t}\right)\Phi_t+T_{\left[D\Phi_t\right]^{-1}} R_1.
\]
Now we write 
\[
u=\nabla^{\perp}\Delta^{-1}\left(\omega_0\circ \Phi_t^{-1}\right),
\]
and again using the paracomposition operator we define $R_2$ by
\begin{equation}\label{eq: def R_2}
\omega_0\circ \Phi_t^{-1}={\Phi_t^{-1}}^*\omega_0+T_{\left(\nabla \omega_0\right)\circ \Phi^{-1}_t} \cdot\Phi^{-1}_t+R_2.
\end{equation}
Next we define $R_3$ by
\begin{equation}\label{eq: def R_3}
\nabla \times T_{\left[D\Phi_t\right]^{-1}} \Phi_t^*\left(\nabla^{\perp}\Delta^{-1}\left({\Phi_t^{-1}}^*\omega_0\right)\right)=T_{\frac{\left\vert\xi\right\vert^2}{\left\vert [D\Phi_t]^{-1}\xi\right\vert^2}}\omega_0+R_3.
\end{equation}
Putting the previous computations together we finally get the paralinearised evolution equation on the flow
\begin{equation}\label{eq: renorm evo flow}
\partial_t\left(\nabla \times\left(T_{\left[D\Phi_t\right]^{-1}} \Phi_t\right)\right)=T_{\frac{\left\vert\xi\right\vert^2}{\left\vert [D\Phi_t]^{-1}\xi\right\vert^2}}\omega_0+\boldsymbol{R}_1+\boldsymbol{R}_2+R_3,
\end{equation}
with 
\[
\boldsymbol{R}_1=\nabla \times T_{\left[D\Phi_t\right]^{-1}}R_1+\nabla \times T_{\left[D\Phi_t\right]^{-1}} \nabla^{\perp}\Delta^{-1}R_2,
\]
and 
\[
\boldsymbol{R}_2=\nabla \times T_{\left[D\Phi_t\right]^{-1}}\nabla^{\perp}\Delta^{-1}T_{\left(\nabla \omega_0\right)\circ \Phi^{-1}_t} \Phi^{-1}_t +\nabla\times \left(T_{\left[D\Phi_t\right]^{-1}}T_{Du\circ \Phi_t}-T_{\left[D\Phi_t\right]^{-1} Du\circ \Phi_t}\right)\Phi_t.
\]
We now proceed to estimate each term in $\left(\sigma_a(x,\epsilon D)(\boldsymbol{R}_1+\boldsymbol{R}_2+R_3),\sigma_a(x,\epsilon D)\omega_0\right)_{L^2(\mathbb{R}^2)}$, note that it suffices to estimate $dr_{\boldsymbol{R}_1}(\epsilon), \ dr_{\boldsymbol{R}_2}(\epsilon)$ and $dr_{R_3}(\epsilon)$. Henceforth we fix $s-1>\delta>0$. For $R_3$ we note that by the pull-back formula of paradifferential operators by the paracomposition operator given in \eqref{eq:paracom exp} as well as the composition property of paradifferential operators in Theorem \ref{thm: symb calc para} that by construction $T_{\frac{\left\vert\xi\right\vert^2}{\left\vert [D\Phi_t]^{-1}\xi\right\vert^2}}$ is the principal symbol of 
\[
\nabla \times T_{\left[D\Phi_t\right]^{-1}} \Phi_t^*\left(\nabla^{\perp}\Delta^{-1}\left({\Phi_t^{-1}}^*\omega_0\right)\right).
\]
 Thus by the spectral localisation property of paradifferential operators in Proposition \ref{prop: para act spctrm}, the continuity of paraproducts in Theorem \ref{thm: para product} we have for a universal increasing function $C_\delta$ growing at most polynomially fast
\[
dr_{R_3}(\epsilon)\leq  \epsilon^{\min(s-1-\delta,1)} C_\delta\left(\left\Vert D\Phi_t\right\Vert_{L^\infty},\left\Vert [D\Phi_t]^{-1}\right\Vert_{L^\infty}\right)  \left\Vert D\Phi_t\right\Vert_{W^{\min(s-1-\delta,1),\infty}}dr_{\omega_0}(\lambda \epsilon).
\]
Next $\boldsymbol{R}_2$ is treated in the same fashion as $R_3$. First $T_{\left(\nabla\omega_0\right)\circ \Phi_t^{-1}}$ is treated as an operator of order $\max(2-s,0)$ to give 
\[
dr_{\nabla \times T_{\left[D\Phi_t\right]^{-1}}\nabla^{\perp}\Delta^{-1}T_{\left(\nabla \omega_0\right)\circ \Phi^{-1}_t}\cdot \Phi^{-1}_t}(\epsilon)\leq C_\delta  \epsilon^{\min(s-1-\delta,1)} \left\Vert [D\Phi_t]^{-1}\right\Vert_{L^\infty} \left\Vert \omega_0\right\Vert_{H^s}  dr_{D\Phi^{-1}_t}(\lambda \epsilon),
\]
and $T_{\left[D\Phi_t\right]^{-1}}T_{Du\circ \Phi_t}-T_{\left[D\Phi_t\right]^{-1} Du\circ \Phi_t}$ is of order $-\max(s-1-\delta,1)$ which gives
\begin{multline*}
dr_{\nabla\times \left(T_{\left[D\Phi_t\right]^{-1}}T_{Du\circ \Phi_t}-T_{\left[D\Phi_t\right]^{-1} Du\circ \Phi_t}\right)\Phi_t}(\epsilon)\\ \leq C_\delta  \epsilon^{\min(s-1-\delta,1)}  \left\Vert D\Phi_t\right\Vert_{W^{\min(s-1-\delta,1),\infty}} \left\Vert Du\right\Vert_{W^{\min(s-1-\delta,1),\infty}} dr_{D\Phi_t}(\lambda \epsilon).
\end{multline*}
Finally for $\boldsymbol{R}_1$ we have the terms that bring into play the residual term of the paracomposition operator thus we have the gain of $\min(s-1-\delta,1)$ derivative and thus the $\epsilon^{\max(s-1-\delta,1)}$ factor by Lemma \ref{lem:paracomp omeg norm} to get
\[
dr_{ \nabla \times T_{\left[D\Phi_t\right]^{-1}}R_1}(\epsilon) \leq C_\delta\left(\left\Vert D\Phi_t\right\Vert_{W^{\min(s-1-\delta,1),\infty}},\left\Vert [D\Phi_t]^{-1}\right\Vert_{L^\infty}\right) \epsilon^{\min(s-1-\delta,1)} dr_{Du}(\epsilon),
\]
and
\[
dr_{\nabla \times T_{\left[D\Phi_t\right]^{-1}} \nabla^{\perp}\Delta^{-1}R_2}(\epsilon)\leq C_\delta\left(\left\Vert D\Phi_t\right\Vert_{W^{\min(s-1-\delta,1),\infty}},\left\Vert [D\Phi_t]^{-1}\right\Vert_{L^\infty}\right) \epsilon^{\min(s-1-\delta,1)} dr_{\omega_0}(\epsilon).
\]
Plugging the estimates given in Theorem \ref{thm:welp 2d eul} and \ref{thm: specific smooth prop} we get the desired result on $\boldsymbol{R}_1,\boldsymbol{R}_2$ and $R_3$.
Finally to get the result in the form stated in Theorem \ref{thm: intro lyap} we see that
\[\left[T_a,T_{\frac{\left\vert\xi\right\vert^2}{\left\vert [D\Phi_t]^{-1}\xi\right\vert^2}}\right] , \ T_{\frac{\left\vert\xi\right\vert^2}{\left\vert [D\Phi_t]^{-1}\xi\right\vert^2}}-T_{\frac{\left\vert\xi\right\vert}{\left\vert [D\Phi_t]^{-1}\xi\right\vert}}T_{\frac{\left\vert\xi\right\vert}{\left\vert [D\Phi_t]^{-1}\xi\right\vert}} \text{ and } T_{\frac{\left\vert\xi\right\vert}{\left\vert [D\Phi_t]^{-1}\xi\right\vert}}-\left(T_{\frac{\left\vert\xi\right\vert}{\left\vert [D\Phi_t]^{-1}\xi\right\vert}}\right)^t \] are all smoothing operators of order $-\min(s-1-\delta,1)$ where $T_a^{t}$ is the adjoint of $T_a$.

\subsection{Proof of Corollary \ref{cor: blw up}}
We suppose without loss of generality that $\left\Vert D\Phi_t \right\Vert_{L^\infty}<+\infty$ thus there exists $K>0$ such that for all $t\geq 0$
\[
\frac{\left\vert \xi\right\vert}{\left\vert [D\Phi_t]^{-1}\xi\right\vert}\geq K.\]
Thus for all $t$ there exists an $a$ such that 
\[
\left\Vert T_{\frac{\left\vert\xi\right\vert}{\left\vert [D\Phi_t]^{-1}\xi\right\vert}}a(x,\epsilon)\omega_0\right\Vert^2_{L^2(\mathbb{R}^2)}\geq C dr_{\omega_0}(\epsilon)^2,
\]
 thus
\[
\sup_{\epsilon\geq 0} \sup_{\substack{a\in C_0^\infty\left(\mathbb{R}^2\right)\\ M^0_1(a)\leq 1}}  \frac{ \left(a(x,\epsilon D)\nabla \times T_{[D\Phi_t]^{-1}}\Phi_t,\chi(\epsilon D)\omega_0\right)}{dr_{\omega_0}(\epsilon)^2}\geq Ct.
\]
Thus $\left\Vert D\Phi_t-Id \right\Vert_{\omega_0}$ blows up at least linearly when $t\to +\infty$. To get the more general $t^{\frac{1}{3}}$ we note that with an analogous reasoning we have the differential inequality 
\[
\left\Vert D\Phi_t\right\Vert_{L^\infty}\left\Vert D\Phi_t-Id \right\Vert_{\omega_0} \geq C \int_0^t\frac{1}{\left\Vert D\Phi_s\right\Vert_{L^\infty}}ds.
\]

\appendix
\section{Notions of mircolocal analysis}\label{sec: micr anal}
At the heart of the construction in Theorem \ref{thm: intro lyap} is the use of microlocal analysis, in particular Shnirelman's construction can be seen as a consequence of the algebra property of pseudodifferential and paradifferential operators combined with the fact that restricted to principal symbols this algebra becomes commutative. In this section we will give a simple yet complete recount of all the concepts and classical results that are needed to carry out the construction. All of the results are taken from the excellent monographs \cite{alinhac2007pseudo,hormander1997lectures,metivier2008differential,taylor2000tools}
\subsection{The Littlewood-Paley decomposition}
\begin{definition}[Littlewood-Paley decomposition]\label{def: LP decomp}
Pick $P_0\in C^\infty_0(\mathbb{R}^2)$ so that: $$P_0(\xi)=1 \text{ for } \left\vert \xi \right\vert <1 \text{ and } P_0(\xi)=0 \text{ for } \left\vert \xi \right\vert >2. $$ We define a dyadic decomposition of unity by:
\[ \text{for } k \geq 1, \ P_{\leq k}(\xi)=P_0(2^{-k}\xi), \ P_k(\xi)=P_{\leq k}(\xi)-P_{\leq k-1}(\xi). \]
 Thus,\[ P_{\leq k}(\xi)=\sum^k_{j=0}P_j(\xi) \text{ and } 1=\sum_{j=0}^\infty P_j(\xi). \]
 Introduce the operator acting on $\mathscr{S} '(\mathbb{R}^2)$: 
 \[P_{\leq k}(D)u=\mathscr{F}^{-1}(P_{\leq k}(\xi)u) \text{ and } u_k=P_k(D)u=\mathscr{F}^{-1}(P_k(\xi)u).\]
 Thus,
 \[u=\sum^{+\infty}_{k=0} u_k.\]
 Finally put for $k\geq 1, C_k=\mbox{supp} \ P_k$ the set of rings associated to this decomposition.
\end{definition}

An interesting property of the Littlewood-Paley decomposition is that even if the decomposed function is merely a distribution the terms of the decomposition are regular, indeed they all have compact spectrum and thus are entire functions. On classical functions spaces this regularization effect can be ``measured" by the following inequalities due to Bernstein.

\begin{proposition}[Bernstein's inequalities]\label{prop: bernstein1}
Suppose that $a\in L^p(\mathbb{R}^2)$ has its spectrum contained in the ball $\{\left\vert \xi \right\vert \leq \lambda\}$. 

Then $a\in C^\infty$ and for all $\alpha \in  \mathbb{N}^d$ and $1\leq p \leq q \leq +\infty$, there is $C_{\alpha,p,q}$ (independent of $\lambda$) such that 
\[\left\Vert \partial_x^{\alpha} a \right\Vert _{L^q} \leq C_{\alpha,p,q} \lambda^{\left\vert \alpha\right \vert +\frac{d}{p}-\frac{d}{q}}\left\Vert a \right\Vert _{L^p}.\]
In particular,
\[\left\Vert \partial_x^{\alpha} a \right\Vert _{L^q} \leq C_{\alpha} \lambda^{\left\vert \alpha\right \vert }\left\Vert a \right\Vert _{L^p}, \text{ and for $p=2$, $p=\infty$}\]
\[\left\Vert a \right\Vert _{L^\infty}\leq C \lambda^{\frac{d}{2}} \left\Vert a \right\Vert _{L^2}.\]
If moreover a has its spectrum included in $ \{0<\mu \leq \left\vert \xi \right\vert \leq \lambda\}$ then:
\[
 C_{\alpha,q}^{-1} \mu^{\left\vert \alpha\right \vert }\left\Vert a \right\Vert _{L^q}\leq \left\Vert \partial_x^{\alpha} a \right\Vert _{L^q} \leq C_{\alpha,q} \lambda^{\left\vert \alpha\right \vert }\left\Vert a \right\Vert _{L^q}.
\]
\end{proposition}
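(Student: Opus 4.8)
The plan is to derive all three families of inequalities from the single reproducing identity $a=\psi_\lambda * a$ together with Young's convolution inequality. Fix once and for all $\phi\in C^\infty_0(\mathbb{R}^d)$ with $\phi\equiv 1$ on $\{|\xi|\le 1\}$ and $\operatorname{supp}\phi\subset\{|\xi|\le 2\}$, set $\psi=\mathscr{F}^{-1}\phi\in\mathscr{S}(\mathbb{R}^d)$ and $\psi_\lambda(x)=\lambda^d\psi(\lambda x)$, so that $\mathscr{F}(\psi_\lambda)(\xi)=\phi(\xi/\lambda)$. Since $\operatorname{supp}\mathscr{F}(a)\subset\{|\xi|\le\lambda\}$ and $\phi(\xi/\lambda)\equiv 1$ there, we get $\mathscr{F}(a)=\phi(\cdot/\lambda)\mathscr{F}(a)$, i.e. $a=\psi_\lambda * a$. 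Differentiating under the convolution (legitimate because $\psi_\lambda\in\mathscr{S}$ and $a\in L^p$) gives $a\in C^\infty$ with $\partial_x^\alpha a=(\partial_x^\alpha\psi_\lambda)* a$ for every $\alpha$, which already settles the smoothness claim.

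For the $L^p\to L^q$ estimate I would apply Young's inequality with the exponent $r\in[1,\infty]$ fixed by $1+\tfrac1q=\tfrac1r+\tfrac1p$ (admissible precisely because $p\le q$): $\|\partial_x^\alpha a\|_{L^q}\le\|\partial_x^\alpha\psi_\lambda\|_{L^r}\|a\|_{L^p}$. The scaling $\partial_x^\alpha\psi_\lambda(x)=\lambda^{d+|\alpha|}(\partial^\alpha\psi)(\lambda x)$ gives $\|\partial_x^\alpha\psi_\lambda\|_{L^r}=\lambda^{|\alpha|+d-\frac dr}\|\partial^\alpha\psi\|_{L^r}$, and $\|\partial^\alpha\psi\|_{L^r}<\infty$ for every $r$ since $\psi$ is Schwartz. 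As the Young relation forces $d-\tfrac dr=d(\tfrac1p-\tfrac1q)$, the power of $\lambda$ is exactly $|\alpha|+\tfrac dp-\tfrac dq$, proving the estimate with $C_{\alpha,p,q}=\|\partial^\alpha\psi\|_{L^r}$, manifestly independent of $\lambda$. The listed special cases then follow by specialization ($q=p$, respectively $(\alpha,p,q)=(0,2,\infty)$).

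For the two-sided bound on the annulus $A=\{\mu\le|\xi|\le\lambda\}$, the upper half is the case $p=q$ just proved, so only the lower half needs an argument, which I would obtain by inverting the derivatives with a smooth multiplier supported near $A$. Set $k=|\alpha|$ and use the multinomial identity $|\xi|^{2k}=\sum_{|\gamma|=k}\binom{k}{\gamma}\xi^{2\gamma}$, which gives $\sum_{|\gamma|=k}\xi^{2\gamma}\ge c_k|\xi|^{2k}\ge c_k\mu^{2k}$ on $A$; choose $\Theta\in C^\infty_0(\mathbb{R}^d\setminus\{0\})$ with $\Theta\equiv 1$ on $A$ and $\operatorname{supp}\Theta\subset\{\mu/2\le|\xi|\le 2\lambda\}$, and put $m_\beta(\xi)=\Theta(\xi)\,\overline{(i\xi)^\beta}\big/\sum_{|\gamma|=k}\xi^{2\gamma}$ for $|\beta|=k$. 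Then $\sum_{|\beta|=k}m_\beta(i\xi)^\beta=\Theta$, hence $\mathscr{F}(a)=\sum_{|\beta|=k}m_\beta\mathscr{F}(\partial^\beta a)$ on $\operatorname{supp}\mathscr{F}(a)$, so $a=\sum_{|\beta|=k}(\mathscr{F}^{-1}m_\beta)*\partial^\beta a$; Young's inequality then yields $\|a\|_{L^q}\le\big(\sum_{|\beta|=k}\|\mathscr{F}^{-1}m_\beta\|_{L^1}\big)\sup_{|\beta|=k}\|\partial^\beta a\|_{L^q}$, and the dilation $m_\beta(\xi)=\mu^{-k}\widetilde m_\beta(\xi/\mu)$ with $\widetilde m_\beta$ a fixed profile shows $\|\mathscr{F}^{-1}m_\beta\|_{L^1}\le C\mu^{-k}$, which is the claimed reverse inequality.

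This is the classical argument (see e.g. \cite{bahouri2011fourier}); there is no genuine analytic obstacle, only two bookkeeping points. First, one must keep the homogeneity exponents consistent between the Young relation and the dilations of the kernels, which is where a slip would be easy. Second, the $L^1$-norm of $\mathscr{F}^{-1}m_\beta$, and hence the constant in the lower bound, depends a priori on the aspect ratio $\lambda/\mu$ of the annulus through the fixed profile $\widetilde m_\beta$; this is harmless in the present paper because the proposition is only applied to the Littlewood--Paley rings $C_k$, for which $\lambda/\mu$ is universal, so the constants are uniform in $k$.
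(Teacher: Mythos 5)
The paper does not actually prove this proposition: it is stated in the appendix as a standard result quoted from the monographs cited there, so there is no in-paper argument to compare against. Your proof is the classical one (essentially the proof of Lemma 2.1 of \cite{bahouri2011fourier}) and it is correct: the reproducing identity $a=\psi_\lambda *a$, Young's inequality with $1+\frac1q=\frac1r+\frac1p$, and the dilation count $\|\partial^\alpha\psi_\lambda\|_{L^r}=\lambda^{|\alpha|+d-d/r}\|\partial^\alpha\psi\|_{L^r}$ fit together exactly as you say, and the multiplier identity $\sum_{|\beta|=k}m_\beta(\xi)(i\xi)^\beta=\Theta(\xi)$ gives the reverse bound.

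Two remarks on the second half. First, what your argument actually establishes is $\mu^{k}\|a\|_{L^q}\le C_{k,q}\sup_{|\beta|=k}\|\partial^\beta a\|_{L^q}$, and this is the correct form of the statement: the lower bound as literally printed in the proposition, with a single fixed $\alpha$ on the right-hand side, fails in dimension $\ge 2$ (take $\mathscr{F}(a)$ concentrated near a point of the annulus where $\xi^\alpha$ vanishes, e.g.\ $\alpha=(1,0)$ and $\hat a$ supported near $(0,\mu)$). So your formulation is the one to retain, and no use of the proposition in the paper requires more. Second, your caveat that the constant in the lower bound depends on the aspect ratio $\lambda/\mu$ is an artifact of truncating $\Theta$ from above and can be removed: since $m_\beta$ only needs to agree with $\overline{(i\xi)^\beta}/\sum_{|\gamma|=k}\xi^{2\gamma}$ on $\operatorname{supp}\mathscr{F}(a)$, you may take $\Theta$ to be a purely low-frequency cutoff, $\Theta\equiv1$ for $|\xi|\ge\mu$ and $\Theta\equiv0$ for $|\xi|\le\mu/2$, with no upper truncation. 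Then $m_\beta(\xi)=\mu^{-k}\widetilde m_\beta(\xi/\mu)$ with $\widetilde m_\beta$ a fixed (i.e.\ $\lambda$-independent) symbol of order $-k$, whose inverse Fourier transform is integrable for $k\ge1$ (rapid decay at infinity, a local singularity no worse than $|x|^{k-d}$ at the origin), so $\|\mathscr{F}^{-1}m_\beta\|_{L^1}\le C_k\mu^{-k}$ uniformly in $\lambda$. Your fallback observation --- that the aspect ratio is universal on the Littlewood--Paley rings $C_k$, which is the only setting in which the proposition is invoked here --- is nevertheless sufficient for the paper.
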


 \subsection{Pseudodifferential calculus}
We introduce here the basic definitions and symbolic calculus results. We first introduce the classes of regular symbols.

\begin{definition}\label{def: pseudo symb}
Given $m \in \mathbb{R}$ we denote the symbol class $S^m(\mathbb{R}^2 )$ as the set of all $a\in C^\infty(\mathbb{R}^2 \times \mathbb{R}^2)$ such that for all $\alpha,\beta \in \mathbb{N}^2$ we have the estimate:
\[\left\vert\partial^{\alpha}_x\partial^{\beta}_\xi a(x,\xi)\right\vert\leq C_{\alpha,\beta}(1+\left\vert\xi\right\vert)^{m- \beta}.\]
$S^m(\mathbb{R}^2 )$ is a Fr\'echet space with the topology defined by the family of semi-norms:
\[M^m_{\alpha}(a;\beta)=\sup_{i\leq \left\vert\alpha\right\vert,j\leq \left\vert \beta\right\vert}\sup_{\mathbb{R}^2 \times \mathbb{R}^2}\left\vert\partial^{i}_x\partial^{j}_\xi a(x,\xi)(1+\left\vert\xi\right\vert)^{ j-m}\right\vert.\]
\end{definition}
Given a symbol $a\in S^{m}(\mathbb{R}^2 )$, we define the pseudodifferential operator:
\[\mbox{Op} (a)u(x)=a(x,D)u(x)=\frac{1}{(2\pi)^2}\int_{\mathbb{R}^2}e^{ix.\xi}a(x,\xi)\mathscr{F}(u)(\xi)d\xi. \]
\begin{theorem}\label{thm: pseudo continuity}
Consider $m \in \mathbb{R}$ then for $a\in S^m\left(\mathbb{R}^2\right)$, $\mbox{Op} (a)$ is of order $m$, more precisely for $s \in \mathbb{R}$ there exists a constant $C_s$ such that:
\[\left\Vert \mbox{Op} (a) \right\Vert_{H^{s} \rightarrow H^{s-m}}\leq C_s M^m_{\lceil s \rceil}(a,2).\]
\end{theorem}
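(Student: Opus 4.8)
\medskip
\noindent\emph{Proof proposal.} The plan is the classical one: reduce to the $L^{2}$-boundedness of operators with symbols in $S^{0}(\mathbb{R}^{2})$, and prove that by a dyadic decomposition of the symbol together with an almost-orthogonality argument. For the reduction the key elementary fact is the exact identity $\mbox{Op}(a)\langle D\rangle^{\mu}u=\mbox{Op}\big(a(x,\xi)\langle\xi\rangle^{\mu}\big)u$, valid for every $\mu\in\mathbb{R}$, with $a(x,\xi)\langle\xi\rangle^{\mu}\in S^{m+\mu}$ and seminorms controlled by those of $a$. Writing $v=\langle D\rangle^{s}u$, so that $\|v\|_{L^{2}}=\|u\|_{H^{s}}$, this replaces $a$ by $b:=a\langle\xi\rangle^{-s}\in S^{m-s}$ and reduces the statement to an estimate of the form $\|\mbox{Op}(b)v\|_{H^{-\mu}}\le C\,M^{\mu}_{N}(b;2)\|v\|_{L^{2}}$ with $\mu=m-s$; invoking the Littlewood--Paley characterisation $\|w\|_{H^{-\mu}}^{2}\sim\sum_{k\ge0}2^{-2k\mu}\|P_{k}(D)w\|_{L^{2}}^{2}$ (Definition \ref{def: LP decomp} and Bernstein's inequalities, Proposition \ref{prop: bernstein1}), what is left is the $L^{2}$-boundedness of $\mbox{Op}(c)$ for $c\in S^{0}$, the weights $2^{-k\mu}$ being absorbed by the $S^{\mu}$ bounds on the dyadic pieces of $b$.

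For the $L^{2}$ bound I would decompose the symbol in $\xi$, $c=\sum_{k\ge0}c_{k}$ with $c_{k}(x,\xi)=P_{k}(\xi)c(x,\xi)$, so that $\mbox{Op}(c_{k})=\mbox{Op}(c)\circ P_{k}(D)$. For fixed $k$, integrating by parts in $\xi$ in the Schwartz kernel $K_{k}(x,y)=(2\pi)^{-2}\int e^{i(x-y)\cdot\xi}c_{k}(x,\xi)\,d\xi$ yields $|K_{k}(x,y)|\lesssim 2^{2k}\big(1+2^{k}|x-y|\big)^{-2N}$ with implicit constant a seminorm of $c$, whence Schur's test gives $\|\mbox{Op}(c_{k})\|_{L^{2}\to L^{2}}\lesssim M^{0}_{0}(c;2N)$ \emph{uniformly} in $k$.

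This uniform bound cannot be summed by the triangle inequality, so the next step is the Cotlar--Stein lemma. One half of its hypotheses is immediate: $\mbox{Op}(c_{j})\mbox{Op}(c_{k})^{*}=\mbox{Op}(c)\,P_{j}(D)P_{k}(D)\,\mbox{Op}(c)^{*}=0$ whenever $|j-k|\ge2$, since $P_{j}P_{k}\equiv0$. The other half, $\|\mbox{Op}(c_{j})^{*}\mbox{Op}(c_{k})\|_{L^{2}\to L^{2}}\lesssim 2^{-|j-k|}$, follows from a further integration by parts in $\xi$ that exploits the boundedness of the $x$-derivatives of $c$ — this is the only point where the smoothness of $c$ in $x$ enters. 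Cotlar--Stein then gives $\|\mbox{Op}(c)\|_{L^{2}\to L^{2}}\lesssim(\text{finitely many seminorms of }c)$, and unwinding the reductions of the first paragraph yields the theorem in its stated generality.

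I expect the genuinely delicate point to be the bookkeeping that produces \emph{exactly} $M^{m}_{\lceil s\rceil}(a;2)$ on the right-hand side — in particular keeping the number of $\xi$-derivatives down to $2$, which in dimension two is precisely the borderline case for the kernel integrals to converge absolutely. This is handled by optimising the order of integration by parts in the Schur and Cotlar--Stein estimates and interpolating, trading $\xi$-derivatives against the geometric gain $2^{-|j-k|}$ from almost orthogonality; the $x$-derivative count $\lceil s\rceil$ then falls out of the $H^{s}\leftrightarrow H^{-\mu}$ reduction. Everything else is routine — it is essentially the proof of the Calderón--Vaillancourt theorem specialised to $S^{0}_{1,0}$ — so I would not reproduce the computations in detail.
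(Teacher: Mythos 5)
The paper does not prove this theorem: it appears in Appendix \ref{sec: micr anal}, which is explicitly a review, and the statement is quoted from the cited monographs (H\"ormander, M\'etivier, Taylor, Alinhac--G\'erard), so there is no in-paper argument to compare against. Your sketch is the standard textbook proof (the $S^0_{1,0}$ specialisation of Calder\'on--Vaillancourt: dyadic decomposition of the symbol in $\xi$, uniform $L^2$ bounds on the pieces, Cotlar--Stein), and in outline it is correct; the right-composition identity $\mbox{Op}(a)\langle D\rangle^{\mu}=\mbox{Op}(a\langle\xi\rangle^{\mu})$ and the vanishing of $\mbox{Op}(c_j)\mbox{Op}(c_k)^{t}$ for $|j-k|\ge 2$ are both exact as you say.

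Two points are glossed over more than you acknowledge. First, the reduction via $\|w\|_{H^{-\mu}}^{2}\sim\sum_k 2^{-2k\mu}\|P_k(D)w\|_{L^2}^{2}$ requires you to localise $\mbox{Op}(b)$ \emph{on the left}; unlike the paradifferential operators of Definition \ref{def: para op}, a general $S^{m}$ symbol carries no cutoff on its $x$-frequencies, so $\mbox{Op}(b_j)v$ has no compact Fourier support and $P_k(D)\mbox{Op}(b_j)$ does not vanish off the diagonal $k\sim j$. One must additionally decompose the symbol in its $x$-frequencies and show the off-diagonal blocks are rapidly decaying — this is exactly where the $\lceil s\rceil$ $x$-derivatives in $M^{m}_{\lceil s\rceil}(a;2)$ are spent, so your closing remark has the right intuition but the step is not automatic. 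Second, with only two $\xi$-derivatives in $d=2$ the kernel bound $2^{2k}(1+2^{k}|x-y|)^{-2}$ is not integrable, so Schur's test does not apply directly to it; the uniform $L^2$ bound on the dyadic pieces has to be obtained either by a Plancherel-in-$x$ argument on $\mathscr{F}_x c_k(\eta,\xi)$ (as in M\'etivier's proof of the paradifferential analogue, Theorem \ref{thm: para continuity}) or by interpolating the derivative count, and "optimising and interpolating" should be carried out rather than asserted. Neither issue is fatal — both are resolved in the cited references — but as written they are the two genuine gaps in the sketch.
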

We will now present the main results in symbolic calculus associated to pseudodifferential operators.
\begin{theorem} \label{thm: symb calc pseudp}
Consider two real numbers $m,m' \in \mathbb{R}$ and two symbols $a \in S^m\left(\mathbb{R}^2\right)$and $b \in S^{m'}(\mathbb{R}^2)$ then we have the following.
\begin{itemize}
\item Composition: $\mbox{Op} (a)\circ \mbox{Op} (b)$ is a pseudodifferential operator of order $m+m'$ with symbol $a \otimes b$ defined by:
\[a \otimes  b(x,\xi)=\frac{1}{(2\pi)^2}\int_{\mathbb{R}^2\times \mathbb{R}^2}e^{i(x-y)\cdot(\xi-\eta)} a(x,\eta)b(y,\xi)dyd\eta.\]
Moreover there exists a constant $C_s>0$ such that for $k\in \mathbb{N}$, $s \in \mathbb{R}$ and $(p,q)\in [1,+\infty]$,
\begin{multline*}\left\Vert\mbox{Op} (a)\circ \mbox{Op} (b)(x,\xi)-\mbox{Op} \left(\sum_{\left\vert \alpha\right\vert<k}\frac{1}{i^{\left\vert\alpha\right\vert}\alpha!}(\partial^\alpha_\xi a(x,\xi))(\partial^\alpha_x b(x,\xi))\right)\right\Vert_{H^{s} \rightarrow H^{s-m+k}} \\ \leq C_s (M^m_{k+\lceil s\rceil} (a;2) M^{m'}_{\lceil s\rceil}(b;k+2)+M^m_{\lceil s\rceil} (a;k+2) M^{m'}_{k+\lceil s\rceil}(b;2)).
\end{multline*}
\item Adjoint: The adjoint operator of $\mbox{Op} (a)$, that we will denote with $\mbox{Op} (a)^t$ to avoid confusion with the pullback operator defined in this work, is a pseudodifferential operator of order m with  symbol $a^t$ defined by:
\[a^t(x,\xi)=\frac{1}{(2\pi)^2}\int_{\mathbb{R}^2\times \mathbb{R}^2}e^{-iy\cdot\eta} \bar{a}(x-y,\xi-\eta)dyd\eta\]
Moreover there exists a constant $K>0$ such that for $k\in \mathbb{N}$, $s \in \mathbb{R}$ and $(p,q)\in [1,+\infty]$,
\[\left\Vert \mbox{Op} (a^t)(x,\xi)-\mbox{Op} \left(\sum_{\left\vert\alpha\right\vert <k}\frac{1}{i^{\left\vert\alpha\right\vert}\alpha!}(\partial^\alpha_\xi \partial^\alpha_x  \bar{a}(x,\xi))\right)\right\Vert_{H^{s} \rightarrow H^{s-m+k}} \leq C_s M^m_{k+\lceil s\rceil} (a;k+2).\]
\item Change of variables: consider a smooth diffeomorphism $\chi:\mathbb{R}^2\to \mathbb{R}^2$ such that $D\chi \in W^{k,\infty}\left(\mathbb{R}^2\right)$ for all $k\in \mathbb{N}$. Then defining
\[a^*(x,\xi)=e^{-ix\cdot\xi}\int_{\mathbb{R}^2\times \mathbb{R}^2} a(\chi(x),\eta) e^{i(\chi(x)-\chi(y))\cdot\eta+iy\cdot\xi}\left|  D\chi(y)\right|dyd\eta \in  S^m(\mathbb{R}^2 \times \mathbb{R}^2),\]
we have for all $u\in \mathscr{S}(\mathbb{R}^2)$, $\left(\mbox{Op} (a)\left(u\circ \chi^{-1}\right)\right)\circ \chi=\mbox{Op} (a^*)(u)$. Moreover  for $k\in \mathbb{N}$ there exists an increasing function $C_k$ such that for all $s \in \mathbb{R}$ and $(p,q)\in [1,+\infty]$
\begin{multline*}\left\Vert \mbox{Op} (a^*)(x,\xi)-\mbox{Op} \left(\sum_{\left\vert\alpha\right\vert <k}\frac{1}{\alpha!}\partial^\alpha a(\chi(x),([D\chi(x)]^{-1})^t\xi)Q_{\alpha}(\chi(x),\xi)\right)\right\Vert_{H^{s} \rightarrow H^{s-m+k}}\\ \leq C_k\left(\left\Vert D\chi\right\Vert_{L^\infty},\left\Vert D\chi^{-1}\right\Vert_{L^\infty},\left\Vert D\chi\right\Vert_{W^{(k-1)^+,\infty}}\right) M^m_{k+\lceil s\rceil} (a;k+2),\end{multline*}
where,
\[ Q_{\alpha}(x',\xi)=D^\alpha_{y'}(e^{i(\chi^{-1}(y')-\chi^{-1}(x')-D \chi^{-1}(x')(y'-x'))\cdot\xi})_{|y'=x'} \]
and $Q_{\alpha}$ is polynomial in $\xi$ of degree $\leq \frac{\left\vert \alpha\right\vert}{2}$, with $Q_{0}=1, Q_{1}=0$.
\end{itemize}

\end{theorem}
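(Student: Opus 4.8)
The plan is to deduce all three items from two auxiliary facts about \emph{amplitude operators} $Pu(x)=\frac{1}{(2\pi)^2}\iint e^{i(x-y)\cdot\xi}p(x,y,\xi)\,u(y)\,dy\,d\xi$, where the amplitude $p$ obeys $|\partial^{\beta_1}_x\partial^{\beta_2}_y\partial^\gamma_\xi p(x,y,\xi)|\le C(1+|\xi|)^{m-|\gamma|}$ (``order $m$''), with associated semi-norms. The first is a \emph{reduction lemma}: $P=\mbox{Op}(\widetilde p)$ for a symbol $\widetilde p\in S^m$ with asymptotic expansion $\widetilde p(x,\xi)\sim\sum_{|\alpha|<k}\frac{1}{i^{|\alpha|}\alpha!}(\partial_\xi^\alpha\partial_y^\alpha p)(x,x,\xi)$, the order-$k$ remainder being again an amplitude operator of order $m-k$ whose semi-norms are controlled by finitely many semi-norms of $p$; one proves it by Taylor-expanding $p$ in $y$ around $y=x$ and converting powers of $x-y$ into $\xi$-derivatives via $\nabla_\xi e^{i(x-y)\cdot\xi}=i(x-y)e^{i(x-y)\cdot\xi}$ followed by integration by parts. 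The second is a \emph{continuity criterion}: an amplitude operator of order $m$ maps $H^s$ into $H^{s-m}$ with norm $\le C_s\,M^m_{\lceil s\rceil}(p;2)$, proved by a Littlewood--Paley decomposition in $\xi$ and a Schur (or Cotlar--Stein) estimate — this is where the index $\lceil s\rceil$ and the fixed number $2$ of $\xi$-derivatives (a margin over $d/2=1$ in dimension two) appear. With these in hand, each item reduces to identifying the relevant amplitude and expanding it.

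For the \textbf{composition}, inserting the oscillatory-integral formulas for both factors and rearranging the phase gives $\mbox{Op}(a)\circ\mbox{Op}(b)=\mbox{Op}(a\otimes b)$ with $a\otimes b$ the oscillatory integral of the statement; this integral is expanded by the same device as in the reduction lemma — Taylor-expand $b(y,\xi)$ in its first slot around $y=x$ to order $k$, write each monomial $(y-x)^\alpha e^{i(x-y)\cdot(\xi-\eta)}$ as a constant times $\partial_\eta^\alpha e^{i(x-y)\cdot(\xi-\eta)}$, integrate by parts in $\eta$, and note that the $y$-integration collapses the $\eta$-integration to $\eta=\xi$; this produces exactly $\sum_{|\alpha|<k}\frac{1}{i^{|\alpha|}\alpha!}(\partial_\xi^\alpha a)(\partial_x^\alpha b)$. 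The $k$-th Taylor remainder keeps $k$ factors $y-x$, hence after the same manipulation $k$ extra derivatives, so it is an amplitude operator gaining $k$ Sobolev derivatives over the composition, and criterion (ii) gives the stated bound — the two terms in the semi-norm product according to whether one develops $b$ (in $x$) or, symmetrically, $a$ (in $\xi$). The \textbf{adjoint} is the identical argument run on the amplitude $\overline{a(y,\xi)}$: the reduction lemma plus a Taylor expansion of $\bar a$ in $y$ at $y=x$ yields $a^t\sim\sum_{|\alpha|<k}\frac{1}{i^{|\alpha|}\alpha!}\partial_\xi^\alpha\partial_x^\alpha\bar a$ with the claimed remainder.

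For the \textbf{change of variables}, put $v=u\circ\chi^{-1}$ and compute $\mathscr{F}(v)$ by the substitution $y\mapsto\chi(y)$, which gives
\[
\bigl(\mbox{Op}(a)v\bigr)\circ\chi(x)=\frac{1}{(2\pi)^2}\iint e^{i(\chi(x)-\chi(y))\cdot\eta}\,a(\chi(x),\eta)\,|D\chi(y)|\,u(y)\,dy\,d\eta .
\]
Split the $y$-integral with a cut-off supported near $y=x$: away from the diagonal $\nabla_\eta$ of the phase equals $\chi(x)-\chi(y)$, comparable to $|x-y|$ since $D\chi,D\chi^{-1}\in L^\infty$, so repeated integration by parts in $\eta$ makes that piece a smoothing operator with constants controlled by the norms of $D\chi$. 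Near the diagonal, write $\chi(x)-\chi(y)=D\chi(x)(x-y)+\rho(x,y)$ with $\rho$ and $\partial_y\rho$ vanishing at $y=x$ and $|\rho|\lesssim|x-y|^2\|D^2\chi\|_{L^\infty}$ (so, after shrinking the cut-off — here $D\chi\in W^{1,\infty}$ is used — the linear term dominates), and substitute $\eta=([D\chi(x)]^{-1})^t\xi$, a change of variable in $\eta$ at fixed $x$ with Jacobian $|D\chi(x)|^{-1}$; this turns the near-diagonal piece into an amplitude operator with amplitude
\[
p(x,y,\xi)=e^{i\rho(x,y)\cdot([D\chi(x)]^{-1})^t\xi}\,a\bigl(\chi(x),([D\chi(x)]^{-1})^t\xi\bigr)\,\frac{|D\chi(y)|}{|D\chi(x)|}.
\]
Applying the reduction lemma and Taylor-expanding $p$ in $y$ at $y=x$ to order $k$: the diagonal value of $p$ is $a(\chi(x),([D\chi(x)]^{-1})^t\xi)$, the $y$-derivatives falling on the oscillatory factor reproduce precisely the polynomials $Q_\alpha(\chi(x),\xi)$ — because the exponent vanishes to second order on the diagonal, a nonzero term needs at least two $y$-derivatives per power of $\xi$ pulled down, whence $\deg_\xi Q_\alpha\le|\alpha|/2$, $Q_0=1$, $Q_1=0$ — and the remaining $y$-derivatives hit $|D\chi(y)|$ while the $\xi$-derivatives are supplied by the reduction lemma. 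The order-$k$ remainder then satisfies the asserted $H^s\to H^{s-m+k}$ bound, the constant being an increasing function of $\|D\chi\|_{L^\infty}$, $\|D\chi^{-1}\|_{L^\infty}$ and $\|D\chi\|_{W^{(k-1)^+,\infty}}$ once one counts the derivatives of $\rho$, of $[D\chi]^{-1}$ and of the Jacobian that are generated.

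The \textbf{main obstacle} is precisely this last item: making the change of variables rigorous forces the near/far-diagonal splitting, the use of the pointwise invertibility of $D\chi(x)$ in place of the possibly singular mean-value matrix $\int_0^1 D\chi(y+t(x-y))\,dt$, and — for the quantitative statement — a careful track of which $W^{k,\infty}$-norm of $D\chi$ is charged at each differentiation, together with the Fa\`a di Bruno bookkeeping behind the degree and vanishing properties of the $Q_\alpha$. Composition and adjoint, by contrast, are routine once the reduction lemma and the continuity criterion are in place.
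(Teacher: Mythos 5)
This theorem appears in the paper's appendix as a recalled classical result and is given no proof there: the author explicitly states that all results of that section are taken from the monographs of Alinhac--G\'erard, H\"ormander, M\'etivier and Taylor. Your amplitude-operator scheme --- a reduction lemma turning an amplitude $p(x,y,\xi)$ into a symbol with the asymptotic expansion $\sum_\alpha \frac{1}{i^{|\alpha|}\alpha!}(\partial_\xi^\alpha\partial_y^\alpha p)(x,x,\xi)$, a Sobolev continuity criterion with seminorm control, and then the identification of the relevant amplitude for composition, adjoint, and change of variables (with the near/far-diagonal splitting and the Kuranishi-type substitution in $\eta$) --- is precisely the proof found in those references, so your route coincides with the paper's implicit one and the sketch is essentially correct. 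Two small points are worth flagging. First, in the change-of-variables item you parametrize the second-order phase remainder through $\chi$, via $\rho(x,y)=\chi(x)-\chi(y)-D\chi(x)(x-y)$, whereas the stated polynomials $Q_\alpha$ are written through $\chi^{-1}$ Taylor-expanded at $\chi(x)$; the two give equivalent asymptotic expansions with the same degree bound $\deg_\xi Q_\alpha\le|\alpha|/2$ and agree for $|\alpha|\le 1$, but to land exactly on the displayed $Q_\alpha$ one should run the substitution in H\"ormander's form (expanding $\chi^{-1}$ on the target side). Second, your justification of the precise two-term seminorm bound for the composition remainder (``according to whether one develops $b$ or $a$'') is loose --- the order-$k$ remainder carries $k$ derivatives on $a$ in $\xi$ \emph{and} $k$ derivatives on $b$ in $x$ simultaneously, and the stated bound follows from applying the continuity criterion to that remainder amplitude rather than from a dichotomy --- but this affects only the bookkeeping of which seminorms appear, not the validity of the estimate.
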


\subsection{Paradifferential calculus}
We start by the definition of symbols with limited spatial regularity.
\begin{definition}\label{def: para symbol}
Given $m \in \mathbb{R}$, $\Gamma^m_{\rho}(\mathbb{R}^2)$ denotes the space of locally bounded functions $a(x,\xi)$ on $\mathbb{R}^2\times (\mathbb{R}^2 \setminus 0)$, which are $C^\infty$ with respect to $\xi$ for $\xi \neq 0$ and such that, for all $\alpha \in \mathbb{N}^2$ and for all $\xi \neq 0$, the function $x \mapsto \partial^\alpha_\xi a(x,\xi)$ belongs to $W^{\rho,\infty}$ and there exists a constant $C_\alpha$ such that, for all $\epsilon>0$:
\begin{equation}\label{eq:def para} 
\forall \left\vert \xi\right\vert>\epsilon, \left\Vert\partial^\alpha_\xi a(\cdot,\xi)\right\Vert_{W^{\rho,\infty}}\leq C_{\alpha,\epsilon} (1+\left\vert \xi\right\vert)^{m-\left\vert \alpha\right\vert}. 
\end{equation}
The spaces $\Gamma^m_{\rho}(\mathbb{R}^2)$ are equipped with their natural Fr\'echet topology induced by the semi-norms defined by the best constants in \eqref{eq:def para} (see also \cite{metivier2008differential}):
\[M^m_{\rho}(a;n)=\sup_{\left\vert \alpha\right\vert\leq n} \ \sup_{\left\vert \xi\right\vert\geq\frac{1}{2}}\left\Vert(1+\left\vert \xi\right\vert)^{m-\left\vert \alpha\right\vert}\partial^\alpha_\xi a(.,\xi)\right\Vert_{W^{\rho,\infty}}, \text{ for } n\in \mathbb{N}.\]
\end{definition}

\begin{definition}
Define an admissible cutoff function as a function $\psi^{B,b}\in C^\infty(\mathbb{R}^4)$,  $B>1,b>0$ that verifies:
\begin{enumerate}
\item 
\[
\psi^{B,b}(\xi,\eta)=0 \text{ when }
\left\vert \xi\right\vert< B\left\vert \eta\right\vert+b,
\text{ and }
\psi^{B,b}(\xi,\eta)=1 \text{ when } \left\vert \xi\right\vert>B\left\vert \eta\right\vert+b+1.
\]
\item For all $(\alpha,\beta)\in \mathbb{N}^4,$ there exists $C_{\alpha_\beta}$, with $C_{0,0}\leq 1$, such that:
\begin{equation}\label{eq: grwth cut off para}
\forall(\xi,\eta): \left\vert \partial_\xi^\alpha \partial_\eta^\beta \psi^{B,b}(\xi,\eta)\right\vert\leq C_{\alpha,\beta} (1+\left\vert \xi\right\vert)^{-\left\vert \alpha\right\vert-\left\vert \beta\right\vert}.
\end{equation}
\end{enumerate}
\end{definition}

\label{def: para op}\begin{definition}
Consider a real numbers $m\in \mathbb{R}$, a symbol $a\in \Gamma^m_{\rho}(\mathbb{R}^2)$ and an admissible cutoff function $\psi^{B,b}$ define the paradifferential operator $T_a$ by
\[T_a u=\mbox{Op}\ (\sigma_a)u, \text{ where } \mathscr{F}_x(\sigma_a)(\xi,\eta)=\mathscr{F}_x(\sigma^{B,b}_a)(\xi,\eta)=\psi^{B,b}(\xi,\eta) \mathscr{F}_x (a)(\xi,\eta).\]
Let $G_{\psi^{B,b}}(x,\eta)=\mathscr{F}^{-1}_x\left(\psi^{B,b}(\cdot,\eta)\right)$ then $\sigma_a(\cdot,\eta)=G_{\psi^{B,b}}(\cdot,\eta)\ast a(\cdot,\eta)$.
\end{definition}

An important property of paradifferential operators is their action on functions with localised spectrum.
\begin{proposition}\label{prop: para act spctrm}
Consider two real numbers $m\in \mathbb{R}$, $\rho\geq 0$, a symbol $a\in \Gamma^m_0(\mathbb{R}^2)$, an admissible cutoff function $\psi^{B,b}$ and $u \in \mathscr{S}(\mathbb{R}^2)$.
\begin{itemize}
\item For $R>>b$, if $\mbox{supp}\  \mathscr{F} (u) \subset \{\left\vert \xi\right\vert\leq R\},$ then: 
\begin{equation}\label{eq:frq lc para bll}
\mbox{supp}\  \mathscr{F} \left(T_a u\right)\subset \left\{\left\vert \xi\right\vert\leq \left(1+\frac{1}{B}\right)R-\frac{b}{B}\right\},
\end{equation}
\item For $R>>b$, if $\mbox{supp}\  \mathscr{F} (u) \subset \left\{\left\vert \xi\right\vert\geq R\right\},$ then: 
\begin{equation}\label{eq:frq lc para rng}
\mbox{supp}\  \mathscr{F} \left(T_a u\right) \subset \left\{\left\vert \xi\right\vert\geq \left(1-\frac{1}{B}\right)R+\frac{b}{B}\right\},
\end{equation}
\end{itemize}
\end{proposition}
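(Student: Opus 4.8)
The statement concerns only Fourier supports, so the plan is to pass to the frequency side, write $T_a$ as an explicit Fourier integral, read off the constraint that the admissible cutoff forces on the support of the integrand, and conclude by the triangle inequality.

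First I would record the Fourier representation of $T_a$. Expanding $\sigma_a(x,\xi)$ through its partial Fourier transform in $x$ and inserting it into $\mbox{Op}(\sigma_a)u(x)=(2\pi)^{-2}\int e^{ix\cdot\xi}\sigma_a(x,\xi)\mathscr{F}(u)(\xi)\,d\xi$, a direct computation yields
\[
\mathscr{F}(T_a u)(\zeta)=\frac{1}{(2\pi)^2}\int_{\mathbb{R}^2}\mathscr{F}_x(\sigma_a)(\zeta-\xi,\xi)\,\mathscr{F}(u)(\xi)\,d\xi,
\]
with $\zeta-\xi$ occupying the slot of the symbol's spatial frequency and $\xi$ that of the co-frequency. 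Since for $a\in\Gamma^m_0$ the factor $\mathscr{F}_x(a)(\cdot,\xi)$, and hence $\mathscr{F}_x(\sigma_a)(\cdot,\xi)$, is only a tempered distribution in the first variable, I would make this identity rigorous by pairing $\mathscr{F}(T_a u)$ against an arbitrary $\varphi\in\mathscr{S}(\mathbb{R}^2)$ supported away from the target set and checking that the pairing vanishes; on such a pairing all integrations run over compact sets, so Fubini and integration by parts cause no trouble.

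Next I would exploit the localisation built into $\psi^{B,b}$: the admissible cutoff is designed precisely so that, on the support of $\mathscr{F}_x(\sigma_a)(\zeta-\xi,\xi)=\psi^{B,b}(\zeta-\xi,\xi)\,\mathscr{F}_x(a)(\zeta-\xi,\xi)$, the spectral-gap variable is dominated by the co-frequency in the quantitative sense $|\zeta-\xi|\le\tfrac{1}{B}\big(|\xi|-b\big)$. Hence every $\zeta\in\mbox{supp}\,\mathscr{F}(T_a u)$ admits some $\xi\in\mbox{supp}\,\mathscr{F}(u)$ with $|\zeta-\xi|\le\tfrac{1}{B}(|\xi|-b)$. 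In case (i), $|\xi|\le R$ gives $|\zeta|\le|\xi|+|\zeta-\xi|\le(1+\tfrac{1}{B})|\xi|-\tfrac{b}{B}\le(1+\tfrac{1}{B})R-\tfrac{b}{B}$ by monotonicity of $t\mapsto(1+\tfrac{1}{B})t-\tfrac{b}{B}$ on $[0,\infty)$; here $R\gg b$ only serves to keep this radius positive, the degenerate regime $R<b$ forcing $T_a u\equiv0$. In case (ii), $|\xi|\ge R$ gives $|\zeta|\ge|\xi|-|\zeta-\xi|\ge(1-\tfrac{1}{B})|\xi|+\tfrac{b}{B}\ge(1-\tfrac{1}{B})R+\tfrac{b}{B}$, where the hypothesis $B>1$ is used precisely to make $1-\tfrac{1}{B}>0$, so that the last inequality again follows from monotonicity.

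The computation is essentially bookkeeping once the Fourier representation is in hand; the one step that requires genuine care is the first — attaching a meaning to the Fourier-integral form of $T_a$ for a symbol of the limited regularity class $\Gamma^m_0$, where $\mathscr{F}_x(a)$ need not be a function — which is why I would run the whole argument by testing against Schwartz functions rather than manipulating $\mathscr{F}(T_a u)$ pointwise.
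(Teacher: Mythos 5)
Your proof is correct and is the standard argument: the paper itself gives no proof of this proposition (it is quoted from the monographs cited in the appendix), and the references establish it exactly as you do, by writing $\mathscr{F}(T_a u)(\zeta)=c\int \mathscr{F}_x(\sigma_a)(\zeta-\xi,\xi)\mathscr{F}(u)(\xi)\,d\xi$, using the support constraint of the admissible cutoff, and applying the triangle inequality; your care in running the argument against test functions, since $\mathscr{F}_x(a)(\cdot,\xi)$ is only a distribution, is appropriate. One remark: the support condition you use, $\left\vert\zeta-\xi\right\vert\le\frac{1}{B}\left(\left\vert\xi\right\vert-b\right)$ with $\zeta-\xi$ in the spatial-frequency slot, is the reading that is consistent with the stated conclusion and with the standard paradifferential convention, whereas the paper's definition of $\psi^{B,b}$ literally places the support where $\left\vert\xi\right\vert\ge B\left\vert\eta\right\vert+b$ with $\xi$ the spatial-frequency slot (as fixed by $G_{\psi^{B,b}}(x,\eta)=\mathscr{F}^{-1}_x(\psi^{B,b}(\cdot,\eta))$); you have implicitly corrected what appears to be a swap of the two arguments in that definition, and with the paper's literal convention the proposition would fail, so this correction is necessary rather than optional.
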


The main features of symbolic calculus for paradifferential operators are given by the following theorems taken from \cite{metivier2008differential}, \cite{said2023paracomposition} and \cite{taylor2000tools}.
\begin{theorem}\label{thm: para continuity}
Consider $m \in \mathbb{R}$ then for $a\in \Gamma^m_0(\mathbb{R}^2)$, then $T_a$ is of order $m$, more precisely for $s \in \mathbb{R}$ there exists a constant $C_s$ such that:
\[\left\Vert T_a \right\Vert_{H^{s} \rightarrow H^{s-m}}\leq C_s M^m_0(a,2).\]
\end{theorem}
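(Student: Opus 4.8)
The statement is the Bony--Meyer $L^{2}$-continuity theorem for paradifferential operators, and I would prove it by the elementary-symbol reduction already invoked in the proof of Lemma~\ref{lem: comp mult}. First I would decompose $a\in\Gamma^{m}_{0}(\mathbb{R}^{2})$ as a series $a=\sum_{n}\lambda_{n}q_{n}$ of \emph{elementary symbols of order $m$},
\[
q_{n}(x,\xi)=\sum_{l\ge 0}Q^{n}_{l}(x)\,2^{lm}\,P_{l}(\xi),\qquad \|Q^{n}_{l}\|_{L^{\infty}}\le 1,
\]
where each $Q^{n}_{l}$ has spatial spectrum in a ball of radius $\le 2^{l-N}$ and $\sum_{n}|\lambda_{n}|\lesssim M^{m}_{0}(a;K)$ for a dimensional integer $K$; this is the Coifman--Meyer symbol reduction (Proposition~5 of \cite{coifmandela}, see also \cite{auscher1995paradifferential,metivier2008differential}), obtained by Fourier-expanding the $\xi$-dependence of $a$ on each dyadic shell. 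With $N$ large the admissible cutoff of Definition~\ref{def: para op} acts trivially on $q_{n}$, so $T_{q_{n}}$ is simply multiplication by $Q^{n}_{l}$ followed by $2^{lm}\Delta_{l}$, summed in $l$; and since $T$ is linear in the symbol, $T_{a}=\sum_{n}\lambda_{n}T_{q_{n}}$, reducing everything to a single elementary symbol.

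For a single $T_{q_{n}}$ I would argue as follows: for $v\in H^{s}$ each term $Q^{n}_{l}\,2^{lm}\Delta_{l}v$ is a low-frequency factor times $\Delta_{l}v$, hence has Fourier support in a ring $\{|\xi|\sim 2^{l}\}$; by almost-orthogonality of these rings and Bernstein's inequalities (Proposition~\ref{prop: bernstein1}),
\[
\|T_{q_{n}}v\|_{H^{s-m}}^{2}\;\lesssim\;\sum_{l\ge 0}2^{2l(s-m)}\big\|Q^{n}_{l}\,2^{lm}\Delta_{l}v\big\|_{L^{2}}^{2}\;\le\;\sum_{l\ge 0}2^{2ls}\|Q^{n}_{l}\|_{L^{\infty}}^{2}\|\Delta_{l}v\|_{L^{2}}^{2}\;\lesssim\;\|v\|_{H^{s}}^{2},
\]
the weight $2^{lm}$ being exactly what converts an $H^{s}$-block into an $H^{s-m}$-block. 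Summing in $n$ against $\sum_{n}|\lambda_{n}|\lesssim M^{m}_{0}(a;K)$ then yields $\|T_{a}v\|_{H^{s-m}}\lesssim M^{m}_{0}(a;K)\|v\|_{H^{s}}$, which is the claimed bound modulo the precise value of $K$.

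The remaining point — and the one I expect to be the main obstacle — is pinning $K$ down to the sharp value $2=\lceil d/2\rceil+1$ asserted in the statement for $d=2$. A crude $\ell^{1}$-summation of the Fourier coefficients in the symbol reduction (equivalently, a Schur-test bound on the kernel of $\mbox{Op}(\chi_{k}\sigma_{a})$ after the dyadic localization $v=\sum_{k}\Delta_{k}v$, using Proposition~\ref{prop: para act spctrm} to reduce to a uniform single-block $L^{2}$ estimate) only delivers $K=d+1=3$; recovering the sharp count forces one to replace that by an $L^{2}$-based almost-orthogonality argument of Cotlar--Stein / Calderón--Vaillancourt type, in which the paradifferential cutoff of Definition~\ref{def: para symbol} together with Bernstein makes all the $x$-derivatives free, so that only $\lceil d/2\rceil+1$ derivatives in $\xi$ — i.e. the semi-norm $M^{m}_{0}(a;2)$ — actually enter. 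Everything else (the ring bookkeeping attached to $\psi^{B,b}$, Bernstein, and Littlewood--Paley summation) is routine and parallels the pseudodifferential analogue, Theorem~\ref{thm: pseudo continuity}.
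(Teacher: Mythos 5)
This theorem is stated in the paper's appendix as quoted background: the text introducing it says the symbolic-calculus results are ``taken from \cite{metivier2008differential}, \cite{said2023paracomposition} and \cite{taylor2000tools}'', and no proof is given in the paper itself. So there is no in-paper argument to compare against; the right benchmark is the standard proof in those references, and your sketch is essentially that proof. The two ingredients you use --- the Coifman--Meyer reduction to elementary symbols $q(x,\xi)=\sum_l Q_l(x)2^{lm}P_l(\xi)$ with $Q_l$ spectrally supported in a small ball of radius $\ll 2^l$ (which the paper itself invokes in the proof of Lemma~\ref{lem: comp mult}), and the almost-orthogonality of the resulting blocks $Q_l\,2^{lm}\Delta_l v$ in dyadic rings --- are exactly how the result is proved in Coifman--Meyer and Taylor, and the displayed chain of inequalities is correct (note that the middle step is just H\"older $L^\infty\times L^2\to L^2$; Bernstein is not actually needed there).

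The one place where your account is slightly off is the final paragraph on the seminorm index. Getting $M^m_0(a;2)$ in $d=2$ does not require a Cotlar--Stein argument: the blocks are already pairwise almost orthogonal by Fourier support, so no operator-theoretic almost-orthogonality is needed. The sharp count $\lfloor d/2\rfloor+1$ of $\xi$-derivatives comes instead from bounding the $L^1_z$ norm of the kernel $K_l(x,z)$ of each spectrally localized piece by weighted Cauchy--Schwarz and Plancherel, $\int|K_l|\,dz\le\bigl(\int(1+2^{2l}|z|^2)^{-N}dz\bigr)^{1/2}\bigl(\int(1+2^{2l}|z|^2)^{N}|K_l|^2dz\bigr)^{1/2}$, which converts $N>d/2$ derivatives in $\xi$ of the symbol into a uniform Schur bound; this is the route taken in M\'etivier's notes and is what produces $M^m_0(a;2)$ for $d=2$. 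This is a presentational inaccuracy rather than a gap: your proof strategy is sound and delivers the theorem, with at worst a larger (but still finite and universal) seminorm index, which is all the paper ever uses.
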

\begin{theorem} \label{thm: symb calc para} 
Let $m,m' \in \mathbb{R}$, and $\rho>0$, $a \in \Gamma^m_\rho(\mathbb{R}^2)$and $b \in \Gamma^{m'}_\rho(\mathbb{R}^2)$. 
\begin{itemize}

\item Composition: Then $T_a T_b$ is a paradifferential operator with symbol: $$a \otimes b\in \Gamma^{m+m'}_\rho(\mathbb{R}^2),\text{ more precisely,}$$
\[
T^{\psi^{B,b}}_a T^{\psi^{B',b}}_b= T^{\psi^{\frac{BB'}{B+B'+1},b}}_{a\otimes b}.
\]
Moreover $T_a T_b- T_{a\#b}$ is of order $m+m'-\rho$ where $a \#b $ is defined by:
\[a \#b=\sum_{\left\vert \alpha\right\vert<\rho }\frac{1}{i^{\left\vert \alpha\right\vert}\alpha!} \partial^\alpha_\xi a \partial^\alpha_x b, \]
and there exists $r\in \Gamma^{m+m'-\rho}_0(\mathbb{R})$ such that:
\[ M^{m+m'-\rho}_0(r) \leq C_\rho (M^m_\rho (a;2) M^{m'}_0(b;\lceil \rho \rceil+2)+M^m_0 (a;\lceil \rho \rceil+2) M^{m'}_\rho(b;2)), \]
and we have
 \[T^{\psi^{B,b}}_a T^{\psi^{B',b}}_b- T^{\psi^{\frac{BB'}{B+B'+1},b}}_{a\#b}=T^{\psi^{\frac{BB'}{B+B'+1},b}}_r. \]

\item  Adjoint: The adjoint operator of $T_a$, $T_a^t$ is a paradifferential operator of order m with  symbol $a^t$ defined by:
\[a^t=\sum_{\left\vert \alpha\right\vert<\rho} \frac{1}{i^{\left\vert \alpha\right\vert}\alpha!}\partial^\alpha_\xi \partial^\alpha_x \bar{a}. \]
Moreover, for all $s \in \mathbb{R}$  there exists a constant $C_{s,\rho}$ such that
\[ \left\Vert T_a^t-T_{a^t}\right\Vert_{H^s\rightarrow H^{s-m+\rho}} \leq C_{s,\rho} M^m_\rho (a). \]
\end{itemize}
\end{theorem}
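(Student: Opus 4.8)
The plan is to transfer the pseudodifferential calculus of Theorem~\ref{thm: symb calc pseudp} to the regularised symbols $\sigma_a,\sigma_b$ attached to $T_a,T_b$ in Definition~\ref{def: para op}. The one feature that prevents a direct application is that, since $a\in\Gamma^m_\rho$ is only $W^{\rho,\infty}$ in $x$, the symbol $\sigma_a=G_{\psi^{B,b}}(\cdot,\xi)\ast a(\cdot,\xi)$ lies a priori only in a symbol class of type $(1,1)$, not in $S^m$; what rescues every mapping estimate is the spectral (cone) structure built into the admissible cutoff, which is exactly the structure already used for the continuity statement Theorem~\ref{thm: para continuity}. So the scheme is: (1) establish a symbol-smoothing estimate for $\sigma_a$; (2) run the pseudodifferential composition and adjoint \emph{identities} at the level of the smooth symbols $\sigma_a,\sigma_b$, whose defining oscillatory integrals converge; (3) check that the composed and remainder symbols are themselves regularisations of $\Gamma$-symbols of the advertised orders, so that Theorem~\ref{thm: para continuity}, applied with the improved order, delivers the operator bounds.

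For step (1): the cutoff localises the $x$-spectrum of $\sigma_a(\cdot,\xi)$ in $\{|\zeta|\lesssim 1+|\xi|\}$, so $G_{\psi^{B,b}}(\cdot,\xi)\ast$ acts as a mollification at scale $(1+|\xi|)^{-1}$; combining the derivative bounds \eqref{eq: grwth cut off para} on $\psi^{B,b}$ with Bernstein's inequalities (Proposition~\ref{prop: bernstein1}) gives, for all $k\in\mathbb N$ and $\alpha\in\mathbb N^2$,
\[
\left\Vert\partial_x^k\partial_\xi^\alpha\sigma_a(\cdot,\xi)\right\Vert_{L^\infty}\le C_{k,\alpha}\,M^m_\rho(a;|\alpha|)\,(1+|\xi|)^{m-|\alpha|+(k-\rho)^+},
\]
i.e.\ each $x$-derivative beyond the first $\rho$ of them costs only one power of $|\xi|$; dually, replacing $\sigma_a$ by the un-regularised $a$ in a finite Taylor/Leibniz expansion is an operation that gains one order per unit of $\rho$. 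This is precisely the amount of smoothing that upgrades the integer-order remainders of Theorem~\ref{thm: symb calc pseudp} to the fractional-order remainders claimed here.

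For the composition, apply the pseudodifferential composition formula to $\mbox{Op}(\sigma_a)\circ\mbox{Op}(\sigma_b)=\mbox{Op}(\sigma_a\otimes\sigma_b)$. A direct computation of the $x$-Fourier support of $\sigma_a\otimes\sigma_b$, using that the $x$-frequencies of $\sigma_a$ and $\sigma_b$ are confined to the cones cut out by $\psi^{B,b}$ and $\psi^{B',b}$, shows that this support lies inside the cone associated with $\psi^{\frac{BB'}{B+B'+1},b}$, so $\sigma_a\otimes\sigma_b$ is the regularisation of a symbol $a\otimes b\in\Gamma^{m+m'}_\rho$ for that new cutoff; this is the exact identity $T^{\psi^{B,b}}_aT^{\psi^{B',b}}_b=T^{\psi^{\frac{BB'}{B+B'+1},b}}_{a\otimes b}$. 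For the asymptotic part, expand via Theorem~\ref{thm: symb calc pseudp} to $\lceil\rho\rceil$ terms, replace $\sigma_a,\sigma_b$ by $a,b$ in the retained terms (the error gaining $\rho$ orders by step (1)), and identify the last, genuinely order $m+m'-\rho$, remainder $r$ either by a non-integer Taylor expansion as in \cite{metivier2008differential,said2023paracomposition} or by interpolating the integer-order statements of Theorem~\ref{thm: symb calc pseudp}; the bilinear bound on $M^{m+m'-\rho}_0(r)$ is inherited termwise from the Leibniz-type estimate of Theorem~\ref{thm: symb calc pseudp} combined with step (1). The adjoint is handled identically: the pseudodifferential adjoint formula gives $\mbox{Op}(\sigma_a)^t=\mbox{Op}(c)$ with $c\sim\sum\frac1{i^{|\alpha|}\alpha!}\partial_\xi^\alpha\partial_x^\alpha\overline{\sigma_a}$, and since passing to the adjoint only reflects the $x$-spectrum, $c$ is again a regularisation for the same cutoff $\psi^{B,b}$ (so $T_a^t$ is paradifferential of order $m$); truncating at order $\rho$ and swapping $\sigma_a$ for $a$ in the retained terms yields $T_a^t-T_{a^t}$ bounded $H^s\to H^{s-m+\rho}$ with norm $\lesssim M^m_\rho(a)$.

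The main obstacle is the fractional bookkeeping in step (1) and its propagation through the oscillatory-integral remainders: one must turn ``$\rho$ bounded $x$-derivatives'' into a uniform $(1+|\xi|)^{-\rho}$ gain and carry it through the pseudodifferential calculus without rounding $\rho$ up to $\lceil\rho\rceil$, all while keeping track that $\sigma_a$ is only of type $(1,1)$. The technically cleanest way to sidestep both issues at once --- and the route I would actually write out --- is the elementary-symbol reduction already invoked for Lemma~\ref{lem: comp mult} (following \cite{coifmandela,auscher1995paradifferential}): decompose $a$ and $b$ into rapidly convergent series of elementary symbols $\sum_k T_{Q_k}P_k(\xi)$, so that $T_a$ is a superposition of paraproducts post-composed with Fourier multipliers; then $P_k(D)P_l(D)$ is nonzero only for $|k-l|\le 1$, and the paraproduct decomposition of each $T_{Q_k}P_k(D)T_{R_l}$ reduces the composition and adjoint identities to Leibniz bounds for paraproducts, in which each derivative falling on the lower-frequency factor is worth a power of $2^k$ and exactly $\rho$ of them are available before $L^2$-boundedness fails --- making the order gain $\rho$ transparent and the $k$-sums summable by the rapid decay of the elementary-symbol coefficients.
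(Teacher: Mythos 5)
This theorem is stated in the appendix as a review of classical paradifferential calculus and the paper supplies no proof of its own; it is quoted directly from \cite{metivier2008differential}, \cite{said2023paracomposition} and \cite{taylor2000tools}. Your sketch follows essentially the same route as those sources: the symbol-smoothing estimate $\left\Vert\partial_x^k\partial_\xi^\alpha\sigma_a(\cdot,\xi)\right\Vert_{L^\infty}\lesssim(1+\left\vert\xi\right\vert)^{m-\left\vert\alpha\right\vert+(k-\rho)^+}$ together with the spectral localisation of $\sigma_a(\cdot,\xi)$, the cone computation for the $x$-spectrum of $\sigma_a\otimes\sigma_b$ (which does yield the aperture $\frac{BB'}{B+B'+1}$ and hence the exact identity), and a Taylor expansion to order $\lfloor\rho\rfloor$ whose remainder is controlled by the $W^{\rho,\infty}$ modulus; the elementary-symbol reduction you propose as an alternative is likewise the classical Coifman--Meyer route. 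The only step I would not let stand as written is the suggestion that the fractional-order remainder could be obtained by ``interpolating the integer-order statements'' of Theorem~\ref{thm: symb calc pseudp} --- that is not a well-posed interpolation problem for a fixed pair of symbols --- but the non-integer Taylor expansion you offer alongside it is the correct mechanism and is exactly what the cited references do.
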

If $a=a(x)$ is a function of $x$ only then the paradifferential operator $T_a$ is called a paraproduct. 
It follows from Theorem \ref{thm: symb calc para} and the Sobolev embedding that:
\begin{itemize}
\item If $a \in H^\alpha(\mathbb{R}^2)$ and $b \in H^\beta(\mathbb{R}^2)$ with $\alpha,\beta>1$, then
\[T_aT_b-T_{ab} \text{ is of order } -\bigg( \min\{\alpha,\beta\}-1 \bigg).\]
\item If $a \in H^\alpha(\mathbb{R}^2)$ with $\alpha>1$, then
\[T_a^*-T_{\bar{a}} \text{ is of order } -\bigg(\alpha-1 \bigg).\]
\item If $a \in W^{r,\infty}(\mathbb{R}^2)$ and $r\geq 0$  then:
\[\left\Vert au-T_au\right\Vert_{H^r(\mathbb{R}^2)} \leq C \left\Vert a \right\Vert_{W^{r,\infty}} \left\Vert u\right\Vert_{L^2}.\]
\end{itemize}
An important feature of paraproducts is that they are well defined for function $a=a(x)$ which are not $L^\infty$ but merely in some Sobolev space $H^{1-m}$, see \cite{bahouri2011fourier}.
\begin{proposition}
Let $m>0$. If $a\in H^{1-m}(\mathbb{R}^2)$ and $u \in H^\mu(\mathbb{R})$ then $T_au \in  H^{\mu-m}(\mathbb{R})$. Moreover there exists $C_\mu$ such that
\[ \left\Vert T_a u\right\Vert_{H^{\mu -m}}\leq C_\mu \left\Vert a\right\Vert_{H^{1 -m}}\left\Vert u\right\Vert_{H^{\mu}}. \]
\end{proposition}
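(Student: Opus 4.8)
The plan is to combine the low--high frequency structure built into $T_a$ with the Littlewood--Paley characterisation of $H^s$, using Bernstein's inequalities to pay for the failure of the embedding $H^{1-m}(\mathbb{R}^2)\hookrightarrow L^\infty(\mathbb{R}^2)$ when $m>0$ (note $1-m<1=\tfrac d2$). By Definition~\ref{def: para op} and the spectral support property of Proposition~\ref{prop: para act spctrm}, up to harmless fixed constants in the admissible cutoff one may write
\[
T_a u=\sum_{k\geq 0}P_{\leq k-N}(D)a\cdot P_k(D)u,
\]
for a fixed integer $N$ depending only on $B$; since $P_{\leq k-N}(D)a$ has frequencies $\lesssim 2^{k-N}\ll 2^{k}$ while $P_k(D)u$ has frequencies $\sim 2^{k}$, each summand is spectrally supported in a fixed-proportion annulus $\{|\xi|\sim 2^{k}\}$ (and the summands with $k<N$ vanish). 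By the characterisation of $H^{\mu-m}$ through spectrally localised pieces and Proposition~\ref{prop: bernstein1},
\[
\|T_a u\|_{H^{\mu-m}}^2\leq C_\mu\sum_{k\geq 0}2^{2k(\mu-m)}\|P_{\leq k-N}(D)a\|_{L^\infty}^2\|P_k(D)u\|_{L^2}^2 .
\]

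The substantive step is the bound $\|P_{\leq k-N}(D)a\|_{L^\infty}\leq C_m\,2^{km}\|a\|_{H^{1-m}}$. Writing $c_j:=2^{j(1-m)}\|P_j(D)a\|_{L^2}$, so that $\sum_j c_j^2\approx \|a\|_{H^{1-m}}^2$, Bernstein in $\mathbb{R}^2$ gives $\|P_j(D)a\|_{L^\infty}\leq C\,2^{j}\|P_j(D)a\|_{L^2}=C\,2^{jm}c_j$, whence by Cauchy--Schwarz
\[
\|P_{\leq k-N}(D)a\|_{L^\infty}\leq\sum_{0\leq j\leq k-N}\|P_j(D)a\|_{L^\infty}\leq C\Big(\sum_{j\leq k}2^{2jm}\Big)^{1/2}\Big(\sum_j c_j^2\Big)^{1/2}\leq C_m\,2^{km}\|a\|_{H^{1-m}},
\]
the geometric sum $\sum_{j\leq k}2^{2jm}\leq C_m 2^{2km}$ being exactly where the hypothesis $m>0$ is used. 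Inserting this into the previous display, the factors $2^{2km}$ and $2^{-2km}$ cancel and one obtains $\|T_a u\|_{H^{\mu-m}}^2\leq C_{\mu,m}\|a\|_{H^{1-m}}^2\sum_{k\geq 0}2^{2k\mu}\|P_k(D)u\|_{L^2}^2\approx C_{\mu,m}\|a\|_{H^{1-m}}^2\|u\|_{H^{\mu}}^2$; applying the same estimate to the tails $\sum_{k\geq K}$ shows the defining series converges in $H^{\mu-m}$, so $T_a u$ is well defined there (and in $\mathscr S'(\mathbb{R}^2)$).

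The only real obstacle is the one just isolated: one cannot treat $a$ as an $L^\infty$ multiplier, and the argument rests on quantifying through Bernstein that truncating $a$ below frequency $2^{k}$ costs precisely a factor $2^{km}$, which for $m>0$ is simultaneously summable against $(c_j)\in\ell^2$ and absorbed by the weight defining $H^{\mu-m}$; everything else is the standard paraproduct bookkeeping, cf. Chapter~2 of \cite{bahouri2011fourier}.
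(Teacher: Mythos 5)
Your proof is correct and follows essentially the same route as the paper's source for this statement: the paper gives no proof of its own, deferring to \cite{bahouri2011fourier}, and the argument there is exactly your combination of the dyadic decomposition $T_au=\sum_k P_{\leq k-N}(D)a\cdot P_k(D)u$ with spectral localisation in annuli $\{|\xi|\sim 2^k\}$ and the Bernstein estimate $\left\Vert P_{\leq k-N}(D)a\right\Vert_{L^\infty}\leq C_m 2^{km}\left\Vert a\right\Vert_{H^{1-m}}$, the hypothesis $m>0$ entering precisely through the convergence of the geometric sum. No gaps to report.
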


A main feature of paraproducts is the existence of paralinearisation theorems which allow us to replace nonlinear expressions by paradifferential expressions, at the price of error terms which are smoother than the main terms.

\begin{theorem}[From \cite{bahouri2011fourier}] \label{thm: para product} Let $\alpha, \beta,\kappa \in \mathbb{R} $ be such that $\alpha,\beta> 1$ and $\kappa\geq 0$, then
\item Bony's Linearization Theorem: For all $C^\infty$ function F, if $a \in H^\alpha (\mathbb{R})$ then
\[ F(a)- F(0)-T_{F'(a)}a \in H^{2\alpha-1} (\mathbb{R}^2). \]
\item If $a\in H^\alpha(\mathbb{R}^2)$, $b\in H^\beta(\mathbb{R}^2)$ and $c\in W^{\kappa,\infty}(\mathbb{R}^2)$, then $R(a,b)=ab-T_ab-T_ba \in H^{\alpha+ \beta-1} (\mathbb{R}^2)$ and $R(a,c)=ac-T_ac-T_ca \in H^{\alpha+ \kappa} (\mathbb{R}^2)$. Moreover there exists a positive constant $C$ independent of $a$, $b$ and $c$ such that:
\begin{equation}\label{eq:resid para-product}
\begin{cases}
\left\Vert R(a,b)\right\Vert_{H^{\alpha+ \beta-1} }=\left\Vert ab-T_ab-T_ba\right\Vert_{H^{\alpha+ \beta-1} }\leq C  \left\Vert a\right\Vert_{H^\alpha} \left\Vert b\right\Vert_{H^\beta},\\
\left\Vert R(a,c)\right\Vert_{H^{\alpha+ \kappa} }\leq C  \left\Vert a\right\Vert_{H^\alpha} \left\Vert c\right\Vert_{W^{\kappa,\infty}}.
\end{cases}
\end{equation}
The residual term verifies for $\lambda>>b$, if $\mbox{supp}\  \mathscr{F} (a) \subset \left\{\left\vert \xi\right\vert\geq \lambda\right\}$ and  $\mbox{supp}\  \mathscr{F} (b) \subset \left\{\left\vert \xi\right\vert\geq \lambda\right\}$then: 
\begin{equation}\label{eq:frq lc resid para rng}
\mbox{supp}\  \mathscr{F} \left(R(a,b)\right) \subset \left\{\left\vert \xi\right\vert\geq \left(1-\frac{1}{B}\right)\lambda+\frac{b}{B}\right\}.
\end{equation}
\end{theorem}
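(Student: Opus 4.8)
The plan is to reduce the whole statement to the Littlewood--Paley apparatus of Definition~\ref{def: LP decomp}, the Bernstein inequalities of Proposition~\ref{prop: bernstein1}, and the two standard dyadic characterisations of Sobolev spaces --- one for dyadic blocks supported in annuli, one for blocks supported in balls --- following the scheme of \cite{bahouri2011fourier}.

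\textbf{The residual estimates.} Writing $ab=\sum_{j,k\ge0}\Delta_ja\,\Delta_kb$ and splitting the double sum into the three Bony regions $j\le k-N$, $k\le j-N$ and $|j-k|\le N$, the first two sums are (after fixing $N$ in terms of the admissible cutoff $\psi^{B,b}$) the paraproducts $T_ab$ and $T_ba$, so that $R(a,b)=\sum_q R_q$ with $R_q=\sum_{|i|\le1}\Delta_{q-i}a\,\Delta_qb$. The structural point is that each $\widehat{R_q}$ is supported in a ball $2^qB(0,\Lambda)$, not in an annulus; since $\alpha+\beta-1>0$ I may therefore use the ball-supported Littlewood--Paley characterisation (as in the proof of Lemma~\ref{lem: space charac with balls}, cf.\ \cite{bahouri2011fourier}), so it suffices to bound $\sum_q\bigl(2^{q(\alpha+\beta-1)}\|R_q\|_{L^2}\bigr)^2$. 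Estimating $\|R_q\|_{L^2}\le\sum_{|i|\le1}\|\Delta_{q-i}a\|_{L^\infty}\|\Delta_qb\|_{L^2}$ and using Bernstein together with $\|\Delta_ja\|_{L^\infty}\lesssim2^{j(1-\alpha)}c_j\|a\|_{H^\alpha}$ with $\{c_j\}\in\ell^2$ (licit since $H^\alpha(\mathbb{R}^2)\hookrightarrow L^\infty$ for $\alpha>1$) and $\|\Delta_qb\|_{L^2}=2^{-q\beta}d_q\|b\|_{H^\beta}$ with $\{d_q\}\in\ell^2$, the powers of $2$ telescope to a bounded exponent and Cauchy--Schwarz in $q$ closes the estimate. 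For $R(a,c)$ with $c\in W^{\kappa,\infty}$ one repeats this with $\|\Delta_qc\|_{L^\infty}\lesssim2^{-q\kappa}\|c\|_{W^{\kappa,\infty}}$ (an $\ell^\infty$, not $\ell^2$, bound, coming from $W^{\kappa,\infty}\simeq B^\kappa_{\infty,\infty}$), and the square-summability still comes from $\Delta_qa$, giving the gain $\alpha+\kappa$. Finally the support statement \eqref{eq:frq lc resid para rng} follows by inspecting the supports of the $\widehat{\Delta_{q-i}a\,\Delta_qb}$: if $\widehat a,\widehat b$ vanish on $\{|\xi|<\lambda\}$ then only blocks with $2^q\gtrsim\lambda$ survive, and propagating the cutoff constants exactly as in Proposition~\ref{prop: para act spctrm} yields the lower bound $(1-\tfrac1B)\lambda+\tfrac bB$.

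\textbf{Bony's linearisation.} I would use the telescoping identity $F(a)-F(0)=\bigl(F(S_0a)-F(0)\bigr)+\sum_{q\ge0}\bigl(F(S_{q+1}a)-F(S_qa)\bigr)$ with $S_q=P_{\le q}(D)$, rewriting each increment by the fundamental theorem of calculus as $m_q\,\Delta_{q+1}a$ with $m_q=\int_0^1F'\bigl(S_qa+t\Delta_{q+1}a\bigr)\,dt$; the bottom term $F(S_0a)-F(0)$ is spectrally localised and a smooth function of a smooth function, hence harmless. Since up to the choice of $N$ one has $T_{F'(a)}a=\sum_q S_{q-N}\bigl(F'(a)\bigr)\Delta_qa$, the claim reduces to showing $\sum_q\bigl(m_{q-1}-S_{q-N}F'(a)\bigr)\Delta_qa\in H^{2\alpha-1}$. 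Splitting $m_{q-1}-S_{q-N}F'(a)=\bigl(m_{q-1}-F'(S_{q-1}a)\bigr)+\bigl(F'(S_{q-1}a)-F'(a)\bigr)+\bigl(F'(a)-S_{q-N}F'(a)\bigr)$ and using $a\in H^\alpha(\mathbb{R}^2)\hookrightarrow C^{\alpha-1}$, the smoothness of $F$, and the dyadic bounds $\|a-S_qa\|_{L^\infty}\lesssim2^{-q(\alpha-1)}$, $\|F'(a)-S_qF'(a)\|_{L^\infty}\lesssim2^{-q(\alpha-1)}$, each of the three pieces is $O\!\bigl(2^{-q(\alpha-1)}\bigr)$ in $L^\infty$; pairing with $\|\Delta_qa\|_{L^2}=2^{-q\alpha}d_q\|a\|_{H^\alpha}$, $\{d_q\}\in\ell^2$, and summing (again via the ball-supported characterisation, as the product has spectrum in a ball of radius $\lesssim2^q$ and $2\alpha-1>0$) gives $\|(m_{q-1}-S_{q-N}F'(a))\Delta_qa\|_{L^2}\lesssim2^{-q(2\alpha-1)}d_q$, i.e.\ membership in $H^{2\alpha-1}$.

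The step I expect to be the main obstacle is the sharp remainder analysis in the linearisation: one must keep the loss and the gain of derivatives exactly balanced so that the exponent is $2\alpha-1$ and not merely $\alpha+\varepsilon$, and in dimension two the embedding $H^\alpha\hookrightarrow C^{\alpha-1}$ is borderline, so it must be used dyadically rather than as a clean H\"older bound. A secondary subtlety is reconciling the naive residual $\sum_{|j-k|\le1}\Delta_ja\,\Delta_kb$ with the cutoff-dependent paraproducts $T_a,T_b$ used throughout the paper, which is the bookkeeping behind the precise constants $B,b$ appearing in \eqref{eq:frq lc resid para rng}.
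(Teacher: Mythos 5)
This theorem is quoted from \cite{bahouri2011fourier}; the paper offers no proof of its own, so there is no in-paper argument to compare against, and your route is precisely the standard Bony/Littlewood--Paley proof from that reference. The two quantitative claims are handled correctly: for $R(a,b)$ and $R(a,c)$ the key points are exactly the ones you isolate (each resonant block $R_q$ has Fourier support in a ball $2^qB(0,\Lambda)$; the positivity of the exponents $\alpha+\beta-1$ and $\alpha+\kappa$ makes the ball-supported characterisation of $H^s$ applicable; Bernstein converts one $L^2$ block into an $L^\infty$ block at the cost of one derivative in dimension two). For Bony's linearisation, the telescoping $F(S_{q+1}a)-F(S_qa)=m_q\Delta_{q+1}a$ together with the dyadic use of $H^\alpha\hookrightarrow B^{\alpha-1}_{\infty,\infty}$ is the standard argument; the one detail you should make explicit is that $m_{q-1}=\int_0^1F'(S_{q-1}a+t\Delta_qa)\,dt$ is \emph{not} spectrally localised, so before invoking the ball-supported characterisation you must split off $(Id-S_{q+M})m_{q-1}$ and dispose of it using the rapid decay of the high frequencies of $F'(S_{q-1}a+t\Delta_qa)$ (all of whose derivatives are controlled with growth $2^{qk}$). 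You flag this as the expected obstacle, which is fair at sketch level.

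The genuine gap is \eqref{eq:frq lc resid para rng}. Your justification --- only blocks with $2^q\gtrsim\lambda$ survive, then ``propagate the cutoff constants as in Proposition \ref{prop: para act spctrm}'' --- does not yield a lower bound on $\mbox{supp}\,\mathscr{F}(R(a,b))$: each surviving block $\Delta_{q-i}a\,\Delta_qb$ has Fourier transform supported in the sum of two annuli of \emph{comparable} radii, which is a full ball containing the origin, not an annulus. Proposition \ref{prop: para act spctrm} applies to $T_ab$ and $T_ba$, where the cutoff $\psi^{B,b}$ forces the two frequencies apart, but precisely not to the resonant part $R$. In fact the property cannot be established as stated: take $a=b$ real-valued, nonzero, with $\mbox{supp}\,\mathscr{F}(a)\subset\{\vert\xi\vert\geq\lambda\}$; then $\mathscr{F}(a^2)(0)=c\left\Vert a\right\Vert_{L^2}^2>0$, while $T_aa$ has spectrum away from the origin by Proposition \ref{prop: para act spctrm}, so $\mathscr{F}(R(a,a))$ does not vanish near $\xi=0$. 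Either the displayed support property must be reinterpreted (what is true, and what the body of the paper actually uses in the proof of Lemma \ref{lem: comp mult}, is the upper, ball-type localisation $\mbox{supp}\,\mathscr{F}(R_q)\subset 2^qB(0,\Lambda)$ of each block), or an additional hypothesis separating the frequency supports of $a$ and $b$ is needed; in either case this step of your argument fails as written.
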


\subsection{Paracomposition}\label{sec: paracomp}
We recall the main properties of the paracomposition operator first introduced by S. Alinhac in \cite{alinhac1986paracomposition} to treat low regularity change of variables. Here we present the results we reviewed and generalised in some cases in \cite{said2023paracomposition,taylor2000tools}.
\begin{theorem} \label{thm: def para comp}
 Let $\chi:\mathbb{R}^2 \rightarrow \mathbb{R}^2$ be a $C^1\left(\mathbb{R}^2\right)$ diffeomorphism with $D\chi \in W^{r,\infty}$, $r>0, r\notin \mathbb{N}$ and take $s \in \mathbb{R}$ then the following map is continuous:  
   \begin{align*}
   H^s(\mathbb{R}^2) &\rightarrow H^s(\mathbb{R}^2)\\
  u &\mapsto \chi^* u=\sum_{k\geq 0}  \sum_{\substack{l\geq 0 \\ k-N \leq l \leq k+N}}P_l(D)u_k\circ \chi,
\end{align*} 
where $N \in \mathbb{N}^*$ is such that $2^{N}>\sup_{k,\mathbb{R}^d} \left\vert \Phi_k D\chi\right\vert^{-1}$ and $2^{N}>\sup_{k,\mathbb{R}^2} \left\vert \Phi_k D\chi\right\vert$. For two distinct choices of $N$ and $\hat{N}$ defining two operators $\chi^*$ and $\hat{\chi}^*$ we have 
\[
\left\Vert \chi^*u- \hat{\chi}^*u\right\Vert_{H^{s+r}} \leq C_{s+r}(\left\Vert D\chi\right\Vert_{L^\infty},\left\Vert D\chi^{-1}\right\Vert_{L^\infty})\left\Vert D\chi \right\Vert_{W^{r,\infty}}\left\Vert u \right\Vert_{H^s}.
\]

Taking $\tilde{\chi}:\mathbb{R}^d \rightarrow \mathbb{R}^d$ a $C^{1,\tilde{r}}\left(\mathbb{R}^2\right)$ diffeomorphism with $D\chi \in W^{\tilde{r},\infty}$ map with $\tilde{r}>0, r\notin \mathbb{N}$, then the previous operation has the natural fonctorial property:
\[\forall u \in H^s(\mathbb{R}^2) , \chi^* \tilde{\chi}^* u= ({\chi \circ \tilde{\chi}})^* u +Ru,\]   
 \[\text{with, }  R:H^s(\mathbb{R}^2) \rightarrow H^{s+min(r,\tilde{r})}(\mathbb{R}^2) \text{ continuous}.\]
  \end{theorem}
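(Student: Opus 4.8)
\emph{Proof proposal.} The statement is essentially classical — it goes back to \cite{alinhac1986paracomposition} and is revisited in \cite{taylor2000tools,said2023paracomposition} — and it splits into the $H^s\to H^s$ continuity, a near-orthogonality estimate, and the two ``remainder'' statements, both of which reduce to the fact that the off-diagonal frequency interactions in a composition with a $C^{1+r}$-type change of variables gain $r$ derivatives. For the continuity, note first that $D\chi,D\chi^{-1}\in L^\infty$ makes $\chi$ bi-Lipschitz, so the change of variables $y=\chi(x)$ gives $\|v\circ\chi\|_{L^2}\lesssim \|D\chi^{-1}\|_{L^\infty}\|v\|_{L^2}$. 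Regrouping the defining series by its outer index, $\chi^*u=\sum_{l\geq 0}P_l(D)S_l$ with $S_l=\sum_{k\geq 0,\,|k-l|\leq N}u_k\circ\chi$; each $P_l(D)S_l$ has spectrum in the ring $C_l$ and $\|P_l(D)S_l\|_{L^2}\lesssim\sum_{|k-l|\leq N}\|u_k\|_{L^2}$. The Littlewood--Paley characterisation of $H^s$ together with Cauchy--Schwarz on the $(2N+1)$-term inner sum then yields
\[
\|\chi^*u\|_{H^s}^2\lesssim\sum_{l\geq 0}2^{2ls}\sum_{|k-l|\leq N}\|u_k\|_{L^2}^2\lesssim\|u\|_{H^s}^2,
\]
the implicit constants depending on $\|D\chi\|_{L^\infty},\|D\chi^{-1}\|_{L^\infty}$ through $N$.

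For the dependence on the window, assume $N<\hat N$ and write $\hat\chi^*u-\chi^*u=\sum_{k\geq0}\sum_{N<|l-k|\leq \hat N,\,l\geq 0}P_l(D)(u_k\circ\chi)$, which splits into a ``high output'' part ($l>k+N$) and a ``low output'' part ($l<k-N$). For both one replaces $\chi$ by the mollification $\chi^{(k)}:=P_{\leq k}(D)\chi$: Bernstein's inequalities (Proposition~\ref{prop: bernstein1}) give $\|\chi-\chi^{(k)}\|_{L^\infty}\lesssim 2^{-k(1+r)}\|D\chi\|_{W^{r,\infty}}$, and since $\chi^{(k)}+t(\chi-\chi^{(k)})$ is uniformly bi-Lipschitz for $t\in[0,1]$ and $k$ large (its differential tends to $D\chi$ in $L^\infty$), the mean value theorem yields $\|u_k\circ\chi-u_k\circ\chi^{(k)}\|_{L^2}\lesssim 2^{-kr}\|D\chi\|_{W^{r,\infty}}\|u_k\|_{L^2}$ uniformly in $l$. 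For the smooth map one has the refined Bernstein bounds $\|D^m\chi^{(k)}\|_{L^\infty}\lesssim 2^{k(m-1-r)}\|D\chi\|_{W^{r,\infty}}$, $m\geq 2$; inserting them into $\widehat{u_k\circ\chi^{(k)}}(\xi)=\iint e^{i(\chi^{(k)}(x)\cdot\eta-x\cdot\xi)}\widehat{u_k}(\eta)\,dx\,d\eta$ (with $\widehat{u_k}$ supported where $|\eta|\sim 2^k$) and running a non-stationary phase argument — the phase gradient $(D\chi^{(k)}(x))^{t}\eta-\xi$ has modulus $\gtrsim|\xi|\sim 2^l$ when $l\geq k+N$ with $2^N\gtrsim\|D\chi\|_{L^\infty}$, and $\gtrsim 2^k$ when $l\leq k-N$ with $2^N\gtrsim\|D\chi^{-1}\|_{L^\infty}$ — gives, for every $\nu\geq r$,
\[
\|P_l(D)(u_k\circ\chi^{(k)})\|_{L^2}\lesssim 2^{-(l-k)\nu}2^{-kr}\|u_k\|_{L^2}\ \ (l\geq k+N),\qquad
\|P_{\leq k-N}(D)(u_k\circ\chi^{(k)})\|_{L^2}\lesssim 2^{-kr}\|u_k\|_{L^2}.
\]
These are estimates (B.15)--(B.16) of \cite{taylor2000tools}. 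Reassembling by the outer index and using $k\sim l$ on each band, both parts are bounded in $H^{s+r}$ by $C(\|D\chi\|_{L^\infty},\|D\chi^{-1}\|_{L^\infty})\|D\chi\|_{W^{r,\infty}}\|u\|_{H^s}$, which is the asserted estimate; the finitely many small-$k$ terms have bounded frequency and are harmless.

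Finally, the functorial identity follows \cite{alinhac1986paracomposition,said2023paracomposition}: one first establishes, on each dyadic block, the conjugation rule $\chi^*T_a=T_{a\circ\Xi_\chi}\chi^*$ modulo an operator of order $-r$, where $\Xi_\chi(x,\xi)=(\chi(x),([D\chi(x)]^{-1})^{t}\xi)$, by combining the change-of-variables statement of Theorem~\ref{thm: symb calc pseudp} with the paradifferential calculus of Theorem~\ref{thm: symb calc para}; since $\Xi_{\chi\circ\tilde\chi}=\Xi_\chi\circ\Xi_{\tilde\chi}$ by the chain rule, both $\chi^*\tilde\chi^*$ and $(\chi\circ\tilde\chi)^*$ conjugate symbols in the same way to principal order, and reducing a general symbol to elementary ones and iterating upgrades this to $\chi^*\tilde\chi^*-(\chi\circ\tilde\chi)^*:H^s\to H^{s+\min(r,\tilde r)}$. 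The main obstacle throughout is the pair (B.15)--(B.16): everything rests on extracting the \emph{sharp} gain $2^{-kr}$ from a change of variables that is only $W^{r,\infty}$ at the level of $D\chi$, which forces the two-step strategy — mollify at the input scale $2^{-k}$, absorb the commutator error into $2^{-kr}$, then integrate by parts on the smooth map $\chi^{(k)}$ using the precise bounds $\|D^m\chi^{(k)}\|_{L^\infty}\lesssim 2^{k(m-1-r)}$ — together with a careful accounting of which factor each integration by parts produces, so that no term exceeds $2^{-(l-k)\nu}2^{-kr}$.
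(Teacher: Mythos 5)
First, a point of comparison: the paper does not actually prove Theorem \ref{thm: def para comp}. It is stated in the appendix as a recalled result, with the proofs deferred to \cite{alinhac1986paracomposition}, Appendix B of Chapter 2 of \cite{taylor2000tools}, and \cite{said2023paracomposition}; the only ingredients the paper itself invokes later are Taylor's estimates (B.15)--(B.16), quoted verbatim in the proof of Lemma \ref{lem:paracomp omeg norm}. Your sketch of the first two assertions follows exactly this standard route and is essentially correct: regrouping by the outer index and almost-orthogonality of the rings give the $H^s\to H^s$ bound, and the window-independence reduces to the off-diagonal decay, obtained by mollifying $\chi$ at scale $2^{-k}$, absorbing the commutator into $2^{-kr}$ via the mean value theorem along the uniformly bi-Lipschitz interpolant, and integrating by parts on the smooth map $\chi^{(k)}$. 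One small correction there: the refined Bernstein bound $\left\Vert D^m\chi^{(k)}\right\Vert_{L^\infty}\lesssim 2^{k(m-1-r)}\left\Vert D\chi\right\Vert_{W^{r,\infty}}$ is only valid for $m>1+r$; for $2\le m\le 1+r$ the correct bound is $O(\left\Vert D\chi\right\Vert_{W^{r,\infty}})$, so the bookkeeping in the non-stationary phase step should carry $\max(1,2^{k(m-1-r)})$. This does not change the conclusion.

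The genuine gap is in the functoriality. You propose to deduce that $\chi^*\tilde\chi^*-(\chi\circ\tilde\chi)^*$ maps $H^s\to H^{s+\min(r,\tilde r)}$ from the fact that both operators conjugate paradifferential symbols through $\Xi_{\chi}\circ\Xi_{\tilde\chi}=\Xi_{\chi\circ\tilde\chi}$ to principal order. That implication does not hold: two operators can intertwine the symbol calculus identically without differing by a smoothing operator (for instance $\chi^*$ and $2\chi^*$ conjugate every $T_a$ in the same way), so some normalization pinning both operators to the same underlying composition map is indispensable. The two standard fixes are (i) a direct comparison of the double dyadic sum defining $\chi^*\tilde\chi^*u$ with the single sum defining the composite paracomposition, using the same (B.15)--(B.16) decay for the composite map, whose differential lies in $W^{\min(r,\tilde r),\infty}$ by the chain rule; or (ii) an application of the paralinearization formula of Theorem \ref{thm: lin paracomp} successively to $u\circ\tilde\chi$, to $(u\circ\tilde\chi)\circ\chi$, and to the one-step composition, followed by a comparison of the three expansions. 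Either way the real work is again the off-diagonal estimate, not the symbol conjugation. While repairing this you should also verify the order of composition: with the convention that $\chi^*u$ is the rough part of $u\circ\chi$, one gets $\chi^*\tilde\chi^*u\approx u\circ\tilde\chi\circ\chi$, so the functorial identity is contravariant and the composite map appearing on the right-hand side should be read accordingly.
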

We now give the key paralinearisation theorem taking into account the paracomposition operator. 
\begin{theorem}  \label{thm: lin paracomp}
 Consider $s\in \mathbb{R}$, $u\in H^{s}(\mathbb{R}^2)$ and let $\chi:\mathbb{R}^2 \rightarrow \mathbb{R}^2$ be a $C^{1}\left(\mathbb{R}^2\right)$  diffeomorphism with $D\chi \in W^{r,\infty}(\mathbb{R}^2)$, $r>0, r\notin \mathbb{N}$. Then:
 \[u \circ \chi(x)=\chi^* u(x)+ T_{Du\circ \chi}\cdot\chi(x)+ R(x)\]
 where the paracomposition given in the previous theorem verifies the estimates:
 \[ \left\Vert \chi^* u\right\Vert_{H^s}\leq C_s(\left\Vert D\chi\right\Vert_{L^\infty},\left\Vert D\chi^{-1}\right\Vert_{L^\infty})\left\Vert u\right\Vert_{H^s},\]
 \[Du\circ \chi \in  \Gamma^0_{0}(\mathbb{R}^2) \text{ for $u$  Lipschitz,}   \]
and the remainders verify the estimate if $r+s>0$ 
\[  \left\Vert R\right\Vert_{H^{s+r}} \leq C_{s+r}(\left\Vert D\chi\right\Vert_{W^{r,\infty}},\left\Vert D\chi^{-1}\right\Vert_{L^\infty})\left\Vert u\right\Vert_{H^{s}}. \]	
	Finally the commutation between a paradifferential operator $a \in \Gamma^m_{\beta}(\mathbb{R}^d)$ and a paracomposition operator $\chi^*$ is given by the following
\[
	 \chi^* T_a u =T_{a^*} \chi^* u+T_{{q}^*} \chi^* u  \text{ with } q \in \Gamma^{m-\min(r,\beta)}_{0}(\mathbb{R}^d),
\]
where $a^*$ has the asymptotic expansion
\begin{equation}\label{eq:paracom exp}
a^*(x,\xi)=  \sum_{\substack{\left\vert \alpha\right\vert\leq \lfloor min(r,\beta) \rfloor}}\frac{1}{\alpha!}\partial^\alpha a(\chi(x),([D\chi(x)]^{-1})^t \xi)Q_{\alpha}(\chi(x),\xi),
\end{equation}
where,
\[ Q_{\alpha}(x',\xi)=D^\alpha_{y'}(e^{i(\chi^{-1}(y')-\chi^{-1}(x')-D \chi^{-1}(x')(y'-x'))\cdot\xi})_{|y'=x'}, \]
and $Q_{\alpha}$ is polynomial in $\xi$ of degree $\leq \frac{\left\vert \alpha\right\vert}{2}$, with $Q_{0}=1, Q_{1}=0$.
\end{theorem}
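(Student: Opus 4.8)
The plan is to establish every assertion by a dyadic analysis, in the spirit of Alinhac's original construction of paracomposition and its treatment in \cite{said2023paracomposition,taylor2000tools}. Write $u=\sum_{k\ge 0}u_k$ with $u_k=P_k(D)u$, so that $u\circ\chi=\sum_k u_k\circ\chi$. The first ingredient is a spectral localisation lemma: since $u_k$ is spectrally supported in $\{|\xi|\sim 2^k\}$, Bernstein gives $\|\partial^\alpha u_k\|_{L^\infty}\lesssim 2^{k|\alpha|}\|u_k\|_{L^\infty}$, and an integration-by-parts argument in which no derivative ever falls on $\chi$ beyond $D\chi$ (both composed functions being smooth once $\chi$ is replaced by $P_{\le k}(D)\chi$, whose higher derivatives grow only like $2^{k(|\alpha|-1)}$) shows that $\mathscr{F}\big(u_k\circ(P_{\le k}(D)\chi)\big)$ is concentrated in $\{|\xi|\sim 2^k\}$ up to a tail gaining powers of $2^{-r}$; quantitatively these are the estimates (B.15)--(B.16) of \cite{taylor2000tools} already invoked in Lemma \ref{lem:paracomp omeg norm}. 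In parallel, the $L^2$ change of variables yields $\|u_k\circ\chi\|_{L^2}\lesssim\|D\chi^{-1}\|_{L^\infty}\|u_k\|_{L^2}$, while the mean value theorem and Bernstein show that $u_k\circ\chi-u_k\circ(P_{\le k}(D)\chi)$ carries an extra factor $2^{-kr}$, which controls the passage to the regularised map.

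With these in hand I would obtain the three-term decomposition by inserting $\chi=P_{\le k}(D)\chi+(\chi-P_{\le k}(D)\chi)$ in each block and Taylor-expanding $u_k\circ\chi$ to first order. The zeroth-order term $\sum_k u_k\circ(P_{\le k}(D)\chi)$ splits, via the localisation lemma, into its diagonal piece $\sum_k\sum_{|j-k|\le N}P_j(D)\big(u_k\circ(P_{\le k}(D)\chi)\big)$ --- which is $\chi^*u$ modulo a remainder of the type $R_1$ of Lemma \ref{lem:paracomp omeg norm} --- together with a high-output part ($j>k+N$) and a low-output part ($j<k-N$) that land in $R$ thanks to the (B.15)--(B.16) gains. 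The first-order term $\sum_k\big(Du_k\circ(P_{\le k}(D)\chi)\big)\cdot\big(\chi-P_{\le k}(D)\chi\big)$ is a low-frequency factor (frequencies $\lesssim 2^k$) multiplying the high-frequency factor $\sum_{l>k}P_l(D)\chi$; reindexing the double sum and using that $\sum_{k<l}Du_k\circ(P_{\le k}(D)\chi)=P_{\le l}(D)(Du\circ\chi)$ modulo a smoother error (the zeroth-order expansion applied to $Du$) recovers $\sum_l P_{\le l}(D)(Du\circ\chi)\,P_l(D)\chi=T_{Du\circ\chi}\cdot\chi$ up to a term in $R$. The quadratic Taylor remainder is a product of two factors each at frequencies $\gtrsim 2^k$, hence already in $R$.

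The quantitative bounds then follow by summing the dyadic pieces. The inequality $\|\chi^*u\|_{H^s}\lesssim C_s(\|D\chi\|_{L^\infty},\|D\chi^{-1}\|_{L^\infty})\|u\|_{H^s}$ is immediate from the almost-orthogonality of the $P_j(D)(u_k\circ\chi)$ with $|j-k|\le N$ together with the $L^2$ change of variables, with $\|D\chi\|_{L^\infty}$ entering only through the admissible choice of $N$ and the localisation constants; and $\|R\|_{H^{s+r}}\lesssim C_{s+r}(\|D\chi\|_{W^{r,\infty}},\|D\chi^{-1}\|_{L^\infty})\|u\|_{H^s}$ collects exactly the $2^{-kr}$ and $2^{-j\nu}$ gains of (B.15)--(B.16) together with the H\"older control of the Taylor remainders. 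The assertion $Du\circ\chi\in\Gamma^0_0(\mathbb{R}^2)$ is simply that, for $u$ Lipschitz, $Du\circ\chi\in W^{0,\infty}$ read as a $\xi$-independent symbol, so that $M^0_0(Du\circ\chi;n)\le\|Du\|_{L^\infty}$.

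For the commutation formula the plan is to realise $\chi^*$, modulo an operator smoothing of order $\min(r,\beta)$, as a para-Fourier integral operator quantising the canonical transformation $(x,\xi)\mapsto\big(\chi(x),[D\chi(x)^{-1}]^t\xi\big)$, and then to read off the symbol of $\chi^*\circ T_a\circ(\chi^*)^{-1}$ by an Egorov-type computation: after reducing $T_a$ to $\mbox{Op}(\sigma_a)$ one runs the same stationary-phase expansion as in the change-of-variables part of Theorem \ref{thm: symb calc pseudp}, which produces precisely $a^*(x,\xi)=\sum_{|\alpha|\le\lfloor\min(r,\beta)\rfloor}\frac1{\alpha!}\partial^\alpha a\big(\chi(x),[D\chi(x)^{-1}]^t\xi\big)Q_\alpha(\chi(x),\xi)$ with the same polynomials $Q_\alpha$, the novelty being that the limited regularity $D\chi\in W^{r,\infty}$ forces termination at order $\lfloor\min(r,\beta)\rfloor$ with the tail gathered into a symbol $q\in\Gamma^{m-\min(r,\beta)}_0(\mathbb{R}^2)$, so that $\chi^*T_a u=T_{a^*}\chi^*u+T_{q^*}\chi^*u$. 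I expect this last step to be the main obstacle: controlling the oscillatory integral defining the conjugated symbol when $\chi$ has only H\"older derivatives, and checking that the $Q_\alpha$-combinatorics of the smooth change-of-variables formula survives verbatim up to the truncation order; the remaining parts are bookkeeping over Littlewood--Paley blocks.
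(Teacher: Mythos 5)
Theorem~\ref{thm: lin paracomp} is stated in the appendix as a recalled result: the paper itself gives no proof, deferring entirely to \cite{alinhac1986paracomposition,taylor2000tools,said2023paracomposition} (see also the remark following the statement, which refers to ``the proofs in'' those references). So there is no in-paper argument to compare against; what you have written is a reconstruction of the standard proof from that literature, and it follows essentially the same route: dyadic decomposition $u\circ\chi=\sum_k u_k\circ\chi$, replacement of $\chi$ by the mollified diffeomorphism $P_{\le k}(D)\chi$ with the tail estimates (B.15)--(B.16) of \cite{taylor2000tools}, a first-order Taylor expansion producing the three terms $\chi^*u$, $T_{Du\circ\chi}\cdot\chi$ and $R$, and an Egorov-type conjugation for the commutation formula. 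As an outline this is correct and is the intended argument.

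Two places deserve more care than your sketch gives them. First, the identification of the first-order term with $T_{Du\circ\chi}\cdot\chi$: your reindexing rests on $\sum_{k<l}Du_k\circ(P_{\le k}(D)\chi)\approx P_{\le l}(D)(Du\circ\chi)$, and for general $s\in\mathbb{R}$ (the theorem only assumes $r+s>0$) the partial sum $\sum_{k<l}Du_k\circ\chi$ need not converge in any global norm, so this step must be run blockwise, estimating $\bigl(\sum_{k\ge l}Du_k\circ\chi\bigr)P_l(D)\chi$ and the mollification error $Du_k\circ\chi-Du_k\circ(P_{\le k}(D)\chi)$ scale by scale and verifying the output-frequency localisation before summing in $H^{s+r}$; this is bookkeeping, but it is exactly where the hypothesis $r+s>0$ enters. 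Second, the Egorov step, which you rightly flag as the main obstacle, is not done in the references by a single oscillatory-integral computation for the rough $\chi$: one conjugates block by block through the smooth mollified diffeomorphisms $P_{\le k}(D)\chi$, applies the smooth change-of-variables expansion of Theorem~\ref{thm: symb calc pseudp} at each dyadic scale (which is where the polynomials $Q_\alpha$ and the truncation at order $\lfloor\min(r,\beta)\rfloor$ come from), and controls the discrepancy between scales by the $W^{r,\infty}$ norm of $D\chi$; as written, your plan to quantise the canonical transformation directly for a merely $C^{1,r}$ map would not close. With those two steps carried out as in \cite{said2023paracomposition} or Appendix~B of \cite{taylor2000tools}, the proof is complete.
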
 
\begin{remark}
    A close inspection of the proofs in \cite{alinhac1986paracomposition,taylor2000tools,said2023paracomposition} shows that the increasing functions $C_s$ and $C_{s+r}$ increase at most polynomially fast.
\end{remark}

\end{document}